\documentclass[11pt,reqno]{article}
\usepackage{dsfont, amssymb,amsmath,amscd,latexsym, amsthm, extarrows, amsxtra,amsfonts}
\usepackage[all]{xy}
\usepackage[active]{srcltx}
\usepackage[pdfstartview=FitH, bookmarksnumbered=true,bookmarksopen=true, colorlinks=true, pdfborder=001, citecolor=blue, linkcolor=blue,urlcolor=blue]{hyperref}
\usepackage[round,authoryear]{natbib}
\usepackage{graphicx}  
\usepackage{float}  
\usepackage{subfigure}

\usepackage{graphicx}
\usepackage[title]{appendix}
\usepackage{bm}
\usepackage{tikz}
\usepackage[top=2.8cm, bottom=2.8cm, left=2.8cm, right=2.8cm]{
	geometry}
\newtheorem{theorem}{Theorem}[section]

\newtheorem{corollary}[theorem]{Corollary}
\newtheorem{definition}[theorem]{Definition}

\newtheorem{lemma}[theorem]{Lemma}

\newtheorem{proposition}[theorem]{Proposition}
\newtheorem{remark}[theorem]{Remark}
\numberwithin{equation}{section}

\begin{document}
\makeatletter
\def\@setauthors{%
\begingroup
\def\thanks{\protect\thanks@warning}%
\trivlist \centering\footnotesize \@topsep30\p@\relax
\advance\@topsep by -\baselineskip
\item\relax
\author@andify\authors
\def\\{\protect\linebreak}%
{\authors}%
\ifx\@empty\contribs \else ,\penalty-3 \space \@setcontribs
\@closetoccontribs \fi
\endtrivlist
\endgroup } \makeatother
 \baselineskip 18pt

\title{Robust Portfolio Selection under State-dependent Confidence Set}
\author{
Guohui Guan\thanks{School of Statistics, Renmin University of China, Beijing 100872, China; Email: $<$guangh@ruc.edu.cn$>$.}
\and
Yuting Jia\thanks{Department of Mathematical Sciences, Tsinghua
University, Beijing 100084, China; Email: $<$jyt22@mails.tsinghua.edu.cn$>$}
\and
Zongxia Liang\thanks{Department of Mathematical Sciences, Tsinghua
University, Beijing 100084, China; Email: $<$liangzongxia@mail.tsinghua.edu.cn$>$.}
}

\maketitle

\begin{abstract}
This paper studies the robust portfolio selection problem under a state-dependent confidence set. The investor invests in a financial market with a risk-free asset and a risky asset. The ambiguity-averse investor faces uncertainty over the drift of the risky asset and updates posterior beliefs by Bayesian learning. The investor holds the belief that the unknown drift falls within a confidence set at a certain confidence level. The confidence set varies with both the observed state and time. By maximizing the expected CARA utility of terminal wealth under the worst-case scenario of the unknown drift, we derive and solve the associated Hamilton–Jacobi–Bellman–Isaacs (HJBI) equation. The robust optimal investment strategy is obtained in a semi-analytical form  based on a partial differential equation (PDE). We validate the existence and uniqueness of the PDE and demonstrate the optimality of the solution in the verification theorem. The robust optimal investment strategy consists of two components: myopic demand in the worst-case scenario and hedging demand. The robust optimal investment strategy is categorized into three regions: buying, selling, and small trading.  Ambiguity aversion results in a more conservative robust optimal investment strategy. Additionally, with learning, the investor's uncertainty about the drift decreases over time, leading to increased risk exposure to the risky asset.
 \vskip 15 pt \noindent
\textbf{Keywords:}  Robust portfolio selection; Ambiguity; Bayesian learning; State-dependent confidence set.
\end{abstract}
\vskip15pt

\section{Introduction}\label{Introduction}
Portfolio selection is a fundamental topic in modern financial theory. \citet{markowitz1952} lays the groundwork for modern portfolio theory with the one-period mean-variance model. Later, \citet{merton1969life,merton1971optim} and \citet{samuelson1969life} develop multi-period expected utility models for portfolio selection. Their contributions have inspired a substantial body of literature that further investigates and analyzes the complexities of portfolio selection.

In practice, a key challenge in implementing portfolio selection models lies in the precise estimation of parameters, especially the expected returns (drifts) of risky assets. Research by \citet{merton1980on} highlights the difficulty of achieving satisfactory accuracy in estimating expected returns, a challenge commonly referred to as the mean-blur problem. \citet{ellsberg1961risk} shows that the investor  (she) is not neutral but averse with respect to the parameter uncertainty, which leads to the concept ``ambiguity" in decision-making. \citet{gilboa1987expected}, \citet{gilboa1989maxmin}, \citet{schmeidler1989subjective} and \citet{yaari1987the} formulate natural axioms that should be satisfied by a preference order to account for ambiguity aversion in the late 1980s, which can be summarized to robust maximin preference. The investor evaluates strategies by maximizing the expected utility based on worst-case beliefs. An important  framework under ambiguity assumes that the unknown expected return $\mu$ lies within a  confidence set $\Lambda=[\mu^{\text{min}},\ \mu^{\text{max}}]$, resulting in the formulation of a maxmin criterion
 \begin{equation}\label{problem1}
\max_\pi\min_{\mu\in\Lambda}\mathbb{E}[U(X^{\pi}(T))],
 \end{equation}
where  \(U\) is the utility function, and \(X^\pi\) denotes the wealth under strategy \(\pi\). Criterion \eqref{problem1} seeks the robust optimal strategy under the worst-case scenario for \(\mu\) within the confidence set \(\Lambda\). This framework has been utilized in studies such as \citet{biagini2017the}, \citet{CF14}, \citet{liang2020robust}, \citet{lin2022robust}, \citet{lin2021optim},  and \citet{sass2022robust}, etc. 

Most studies that analyze Criterion \eqref{problem1} assume a constant confidence set with no learning (cf. \citet{biagini2017the}, \citet{jin2015con}, \citet{liang2020robust}, \citet{lin2022robust}, \citet{lin2021optim}, \citet{sass2022robust}). However, as time increases, learning invloves and additional market information typically reduces the uncertainty over the unknown expected return \(\mu\). \citet{malmendier2011dep} find that the investor’s experienced return has a larger influence on her belief about the drift than risky asset return realizations before birth.  A substantial body of research on ambiguity-neutral investors under uncertain expected returns can be traced back to \citet{gennotte1986optim} and \citet{karatzas1991anote}, with further developments by \citet{bismuth2019portfolio}, \citet{honda2003optim}, \citet{karatzas2001bayesian}, \citet{lakner1995utility,lakner1998optim}, and \citet{rieder2005port}, among others.  A key feature of these studies is that the investor can learn the set of posterior beliefs regarding the unobservable expected return. Typically, an investor begins with a prior distribution over \(\mu\) and updates her beliefs through Bayesian learning. \citet{bismuth2019portfolio} demonstrate that with learning, the uncertainty over \(\mu\) is not invariant; instead, it is affected by the observed log-price of the risky asset and generally decreases over time.  Additionally, as noted by \citet{ES05}, ambiguity can also be reduced over time through the process of learning.

In this paper, we study the robust portfolio selection problem in a maxmin framework with learning about the unknown expected returns. The financial market consists of a risk-free asset and a risky asset, with only asset prices being observable. We formulate the robust optimal problem with learning under time-dependent and state-dependent confidence set $\Lambda_{t,y}=[\mu^{\text{min}}_{t,y},\ \mu^{\text{max}}_{t,y}]$, i.e.,
\begin{equation}\label{problem2}
\max_\pi\min_{\mu\in\Lambda_{t,y}}\mathbb{E}[U(X^{\pi}(T))\ | \ \mathcal{F}_t^S],
 \end{equation}
 where $y$ is an observable state variable representing the best estimation of $\mu$, $\mathcal{F}_t^S$ represents the available  market information (asset prices) before time $t$, $\Lambda_{t,y}$ is a confidence set which depends on the current time $t$ and the observable state $y$. The investor possesses a prior over $\mu$ at the initial time, which we suppose to be Gaussian in this paper. The posterior belief over $\mu$ is updated based on the Bayesian rule and is also Gaussian. The confidence set \(\Lambda_{t,y}\) is constructed based on the posterior belief under a specified confidence level.

To our knowledge, there is limited literature addressing ambiguity and learning within the maxmin framework of continuous-time robust portfolio selection. In the context of learning, the confidence set \(\Lambda_{t,y}\) is both time-dependent and state-dependent.  \citet{campanale2011learn} explores the implications of ambiguity aversion and learning in ambiguous environments, specifically focusing on discrete-time household life-cycle portfolio allocation. \citet{campanale2011learn}'s findings suggest that these factors significantly contribute to explaining the observed patterns in household financial choices. Notably, the model presented by \citet{campanale2011learn} simplifies the risky asset return to two discrete outcomes: high or low. \citet{pei2018life} studies discrete-time life-cycle asset allocation problems with ambiguity aversion and learning about the confidence set. \citet{pei2018life} shows that as agents get older, they learn about the equity premium and increase their allocation to stocks. In this paper, we extend the discrete-time framework into a continuous-time model that integrates both ambiguity and learning. In Criterion \eqref{problem1}, where learning is absent, the worst-case scenario for the expected return \(\mu\) is typically constant, allowing for straightforward application of the verification theorem, see \citet{biagini2017the}, \citet{lin2022robust}, \citet{lin2021optim}, and \citet{sass2022robust}. In Criterion \eqref{problem2},  the confidence set is shortened with time and state-dependent, the worst-case scenario for $\mu$ is also time and state-dependent, which makes it difficult to solve.

The main contributions of this paper are as follows:  First, while most existing literature examines the effects of ambiguity aversion by Criterion \eqref{problem1} with a constant confidence set, few studies address the implications of ambiguity aversion and learning in continuous-time portfolio selection problems under Criterion \eqref{problem2}.  The continuous-time robust portfolio selection problem with learning has been investigated by \citet{branger2013robust} within an entropy penalty framework; however, our approach differs from theirs. We establish the continuous-time robust portfolio selection problem under a time-dependent and state-dependent confidence set for the first time. The confidence set is state-dependent, which causes great challenges in this problem. Second, we derive and solve the HJBI equation associated with the robust portfolio selection problem and show some properties of the related function when $U$ is the CARA utility function. In contrast to most studies based on Criterion \eqref{problem1}, our work does not yield explicit solutions. Instead, the solution to the HJBI equation is represented as a solution to a PDE, specifically a Cauchy problem for a one-dimensional linear second-order parabolic equation with unbounded coefficients. We establish the existence and uniqueness of the PDE and provide estimates for the partial derivatives, which are crucial for the verification theorem. Third, we obtain the robust portfolio selection strategy in a semi-analytical form based on the solution of the HJBI equation and prove the optimality in a rigorous verification theorem under mild conditions. Remarkably, before proving the optimality of the candidate robust optimal solution, we show its admissibility  using sophisticated analytical techniques. In this paper, we reformulate Criterion \eqref{problem2} more rigorously. We show the equivalence of Criterion \eqref{problem2} and the robust problem minimizing over a class of equivalent probability measures. Notably, we show that in our framework, Criterion \eqref{problem2} aligns with the traditional Merton problem when the variance of the prior for \(\mu\) is zero, and corresponds to the optimal problem under partial information when \(\Lambda_{t,y} = \{y\}\).

By incorporating learning and ambiguity aversion, this paper reveals several notable findings. The worst-case scenario for \(\mu\) and the robust optimal investment strategy are determined by comparing the risk-free interest rate \(r\) with the confidence set \(\Lambda_{t,y}\). Specifically, when \(r < \mu^{\text{min}}_{t,y}\) (\(r > \mu^{\text{max}}_{t,y}\)), the worst-case scenario occurs at \(\mu = \mu^{\text{min}}_{t,y}\) (\(\mu = \mu^{\text{max}}_{t,y}\)). Conversely, if \(r \in \Lambda_{t,y}\), any \(\mu \in \Lambda_{t,y}\) represents the worst-case scenario. The robust optimal strategy consists of two components: a myopic demand under the worst-case scenario and a hedging demand. When \(r \in \Lambda_{t,y}\), the myopic demand vanishes, leaving only the hedging demand to address uncertainty in \(\mu\). When \(r < \mu^{\text{min}}_{t,y}\) (\(r > \mu^{\text{max}}_{t,y}\)), the smallest (largest) Sharpe ratio is positive (negative), resulting in a positive (negative) myopic demand under the worst-case scenario. We see that ambiguity aversion results in an adjustment in the myopic demand, leading to a more conservative strategy. Additionally, with learning, this adjustment in myopic demand decreases over time and the investor becomes more aggressive over time, aligning with the findings of \citet{pei2018life}. Besides, in robust optimal problems without learning (see \citet{lin2022robust}), the investment strategy is typically divided into three regions: buying, selling, and non-trading. However, with learning, a hedging demand is introduced (positive when \(y < r\) and negative when \(y > r\)), eliminating the non-trading region. We theoretically analyze and compare the signs of myopic and hedging demands, establishing that the robust optimal strategy can be categorized into three regions: buying, selling, and small trading, as illustrated in Fig.~\ref{f7}. Finally, numerical examples confirm our theoretical results.

The remainder of this paper is organized as follows: Section \ref{Problem formation} sets up the model of the robust portfolio selection problem under state-dependent confidence set. Section \ref{Solution of the HJBI equation} solves the associated HJBI equation. Section \ref{Optimal solution} obtains the robust optimal investment strategy and establishes  the verification theorem. Section \ref{Numerical analysis}  presents and discusses some numerical results and sensitivity analysis. The last section concludes this paper.

\section{Problem formation}
\label{Problem formation}
In this section, we set up the model of the robust portfolio selection problem under a time-dependent and state-dependent confidence set. The confidence set is updated based on Bayesian learning and depends on the observed state of the financial market. We consider an investment problem with one risk-free asset and one risky asset. Let $\left(\Omega, \mathcal{F},\left\{\mathcal{F}_{t}\right\}_{0 \leq t \leq T}, \mathbb{P}\right)$ be a filtered complete probability space satisfying the usual conditions. $T >0$ is a constant,  $[0, T]$ is a finite  time horizon, and the filtration $\left\{\mathcal{F}_{t}\right\}_{0 \leq t \leq T}$ represents the whole information of the financial market. Let $W=\left\{W(t): 0 \leq t \leq T\right\}$ be a standard Brownian motion with respect to (abbr. w.r.t.) filtration $\left\{\mathcal{F}_{t}\right\}_{0 \leq t \leq T}$ under probability measure $\mathbb{P}$. 
\subsection{Financial market}

In the financial market, there is one risk-free asset and one risky asset. The risk-free interest rate is a constant $r$. The risky asset price process $S=\left\{S(t): 0 \leq t \leq T\right\}$  satisfies the following stochastic differential equation (abbr. SDE):
\begin{equation*}
	\begin{aligned}
		\mathrm{d} S(t)=S(t)\left[\mu \mathrm{d} t+\sigma \mathrm{d} W(t)\right],\ t\in [0,T],
	\end{aligned}
\end{equation*} 
where the volatility $\sigma>0$ is a constant and the drift $\mu$ is an unknown constant.  In this context, 
the investor is certain about the volatility of the risky asset's price but uncertain about the drift 
$\mu$. In statistics, while the volatility can be determined with relative confidence, the drift $\mu$ remains difficult to estimate accurately.

The wealth process $X^{\pi}=\left\{X^{\pi}(t): 0 \leq t \leq T\right\}$ with an initial endowment $x_0$ satisfies the following SDE:
\begin{equation*}
	\left\{\begin{array}{l}
		\mathrm{d}X^{\pi}(t)=rX^{\pi}(t)\mathrm{d}t+\pi(t)(\mu-r)\mathrm{d}t+\sigma\pi(t)\mathrm{d}W(t),\ t\in [0,T],\\
		X^{\pi}(0)=x_0.
	\end{array}\right.
\end{equation*}
Here the strategy $\pi=\{\pi(t):0\le t\le T\}$ is a control variable that represents the dollar amount allocated to the risky asset $S$. 

\subsection{State-dependent confidence set}
The investor cannot get the whole information about the drift $\mu$ and the Brownian motion $W$, but she can observe the evolutions of the asset prices. Therefore, the accessible information that the investor knows about the value of the drift $\mu$ is the natural filtration  $\{\mathcal{F}_{t}^{S}\}_{0 \leq t \leq T}$ generated by $S$.

Suppose that $\mu$ is, a Gaussian prior (beliefs of $\mu$ at initial time), independent of the Brownian motion $W$ under probability measure $\mathbb{P}$: 
$$\mu\sim N\left(y_0,\sigma_0^2\right),$$ 
where $y_0\in \mathbb{R}$ and $\sigma_0>0$ are known constants estimated by the investor at the initial time.

Define a process $Y=\left\{Y(t): 0 \leq t \leq T\right\}$  by
$$Y(t) \triangleq \mathbb{E}^{\mathbb{P}}\left[\left.\mu\,\right| \,\mathcal{F}_{t}^{S}\right],\  t \in [0,T].$$
Then, from the perspective of the investor, the process $Y=\left\{Y(t): 0 \leq t \leq T\right\}$ represents the best information about the drift $\mu$ that she can learn from the asset prices. It is noteworthy that $Y$ is the \textit{revealing process} defined in \citet{guan2024equil}, and it plays a vital role in determining the confidence set in this paper.

Let the process $W^{S}=\left\{W^{S}(t): 0 \leq t \leq T\right\}$ be defined by 
$$
W^S(t) \triangleq \int_0^t \frac{\mu-Y(s)}{\sigma} \mathrm{d} s+W(t), \ t \in [0, T].
$$
Based on the Girsanov Theorem (see  \citet[Proposition 2]{bismuth2019portfolio}), $W^{S}=\left\{W^{S}(t): 0 \leq t \leq T\right\}$ is  a standard Brownian motion w.r.t. the filtration $\{\mathcal{F}_{t}^{S}\}_{0 \leq t \leq T}$ under probability measure $\mathbb{P}$. The process $W^{S}$ is called the innovation process in filtering theory.

Obviously, in terms of $W^{S}$, the risky asset price $S$ satisfies the following SDE:
$$\mathrm{d} S(t)=S(t)\left[Y(t) \mathrm{d} t+\sigma \mathrm{d} W^{S}(t)\right], \ t\in [0, T].$$
Let the process $Z=\left\{Z(t): 0 \leq t \leq T\right\}$ be the logarithmic risky asset price process, i.e.,
$$Z(t)\triangleq \log S(t),\  t \in [0,T].$$
Then, based on \citet[Propositions 11 and Remark 5]{bismuth2019portfolio},  the posterior distribution of $\mu$ given $\mathcal{F}_{t}^{S}$ (beliefs of $\mu$ at time $t$) is updated by the Bayesian learning and also  Gaussian distributed:
\begin{equation}\label{equ: posterior}
	\mu|\mathcal{F}_t^S\sim N\left(Y(t),\gamma(t)\right),
\end{equation}
where the conditional variance $\gamma(t)=(\sigma_0^{-2}+t\sigma^{-2})^{-1}$, and the conditional mean $Y(t)$ can be expressed as
$$Y(t)=\gamma(t)\left[\sigma^{-2}\left(Z(t)-Z(0)+\frac{t}{2} \sigma^2\right)+\sigma_0^{-2} y_0\right].$$ 
It is worth noting that the conditional variance $\gamma(t)$ is deterministic and decreases with time $t$,  aligning with the investor's decreasing uncertainty about $\mu$ as more information is gathered.
Besides, the \textit{revealing process} $Y$ satisfies the following SDE:
\begin{equation}\label{equ:y}
	\left\{\begin{array}{l}
		\mathrm{d} Y(t)=\frac{\gamma(t)}{\sigma}\mathrm{d} W^{S}(t)=\gamma(t) \sigma^{-2}\left[(\mu-Y(t))\mathrm{d} t+\sigma \mathrm{d} W(t)\right], \ t\in [0, T],\\
		Y(0)=y_0,
	\end{array}\right.
\end{equation}
which is the form of an Ornstein–Uhlenbeck process. Then under the probability measure $\mathbb{P}$, $Y$ is Gaussian distributed. We can easily obtain  $$\mathbb{E}^{\mathbb{P}}[Y(t)]=y_0,\quad\quad \mathbb{E}^{\mathbb{P}}[Y
^2(t)]=y_0^2+\int_0^t\frac{\gamma^2(s)}{\sigma^2}\mathrm{d}s.$$ Therefore, under probability measure $\mathbb{P}$, $$Y(t)\sim N\left(y_0,\int_0^t\frac{\gamma^2(s)}{\sigma^2}\mathrm{d}s\right),$$ 
i.e., $Y(t)\sim N\left(y_0,\frac{\sigma_0^4t}{\sigma_0^2t+\sigma^2}\right)$  under probability measure $\mathbb{P}$.

The posterior distribution of $\mu$ is given
by \eqref{equ: posterior}.  In contrast to much prior work, we assume that the confidence set of $\mu$ is derived from the posterior distribution of $\mu$. According to \eqref{equ: posterior}, the confidence set for \(\mu\) at time \(t\) under state \(y\) is defined as follows:
$$\Lambda_{t,y}=\left [\mu_{t,y}^{\text{min}}=y-a\sqrt{\gamma(t)},\ \mu_{t,y}^{\text{max}}=y+a\sqrt{\gamma(t)} \right ]\footnote{Here we consider a symmetric confidence interval centered around $y$. Our findings can be easily extended to asymmetric confidence intervals as well.},$$
where $a\ge0$ is a constant. Here $a$ characterizes the confidence level of the confidence set $\Lambda_{t,y}$. As $\mu$ is normally distributed, the confidence level of $\Lambda_{t,y}$ is $2\Phi(a)-1$, where $\Phi(\cdot)$ represents the cumulative distribution function of a standard normal distribution. For $a=2.58, 1.96, 1.645$, the confidence levels of the set $\Lambda_{t,y}$ are $99\%, 95\%, 90\%$, respectively. The investor believes that, with a certain confidence level $2\Phi(a)-1$, the value of $\mu$ at time $t$ lies within the confidence set  $\Lambda_{t,y}$.

\subsection{Robust portfolio selection}
\label{Robust optimal investment problem}
We consider an ambiguity-averse investor who searches for a robust optimal strategy by maximizing the expected utility of the terminal wealth under the worst-case scenario of $\mu$. Then the objective of the ambiguity-averse investor at time $t$ is
\begin{equation} 
	\max_{\pi}\min_{\mu\in\Lambda_{t,Y(t)}}\mathbb{E}[U(X^{\pi}(T))\ |\ \mathcal{F}_t^S],
	\label{obj0}
\end{equation}
where $U(\cdot)$ is a utility function. In the robust optimal problem, unlike previous approaches, the confidence set varies with both the state and time: $\Lambda_{t,Y(t)}$ is determined by the observed state $Y(t)$ and the conditional variance of the posterior distribution of $\mu$. As time elapses, the investor's uncertainty about $\mu$ decreases, resulting in a reduction in the size of the confidence set. Additionally, changes in the financial market lead to variations in $Y$, which in turn affect the confidence set.

\subsection{Reformulation of Problem \eqref{obj0}}
In the following, we reformulate the optimization problem (\ref{obj0}) rigorously. 
Recall that we model an investor who is not sure about the drift $\mu$, but addresses this uncertainty through Bayesian learning using the confidence set $\Lambda_{t,y}$. Then all measurable, $\{\mathcal{F}_{t}^{S}\}_{0\leq t \leq T}$-adapted processes $$\tilde \mu=\left\{\tilde \mu(t):\  \tilde \mu(t) \in \Lambda_{t,Y(t)},\ 0 \leq t \leq T\right\}$$ are possible trajectories for the drift $\mu$. Denote the set of all possible trajectories for the drift $\mu$ by $\mathcal{M}$.

Given $\tilde \mu\in \mathcal{M}$, let process $W^{\tilde \mu}=\left\{W^{\tilde \mu}(t): 0 \leq t \leq T\right\}$  be given by 
$$
W^{\tilde \mu}(t) \triangleq \int_0^t \frac{Y(s)-\tilde \mu(s)}{\sigma} \mathrm{d} s+W^{S}(t), \ t\in [0, T],
$$
and define the probability measure $\mathbb{Q}^{\tilde \mu}$  by 
$$
\left.\frac{\mathrm{d} \mathbb{Q}^{\tilde \mu}}{\mathrm{d}\mathbb{P}}\right|_{\mathcal{F}_T^S} \triangleq \exp \left\{-\int_0^T \frac{Y(t)-\tilde \mu(t)}{\sigma} \mathrm{d} W^{S}(t)-\frac{1}{2}\int_0^T \left(\frac{Y(t)-\tilde \mu(t)}{\sigma} \right)^2\mathrm{d} t \right\}.
$$
Then, by the uniform boundedness of $\left\{\Lambda_{t,y} : 0\leq t \leq T \right\}$ and  Girsanov’s theorem, $W^{\tilde \mu}$ is a standard Brownian motion w.r.t. filtration $\{\mathcal{F}_{t}^{S}\}_{0\leq t \leq T}$ under the probability measure $\mathbb{Q}^{\tilde \mu}$. Besides, under the probability measure  $\mathbb{Q}^{\tilde \mu}$, in terms of $W^{\tilde \mu}$, the risky asset price evolves according to the following SDE:
$$\mathrm{d} S(t)=S(t)\left[\tilde \mu(t) \mathrm{d} t+\sigma \mathrm{d} W^{\tilde \mu}(t)\right], \ t\in [0,T].$$
Define the set of all possible equivalent probability measures by
\[
\mathcal{Q}=\left\{\mathbb{Q}^{\tilde \mu}:\ \tilde \mu\in \mathcal{M}
\right\}.
\]
Therefore, $\min\limits_{\mu\in\Lambda_{t,Y(t)}}\mathbb{E}[U(X^{\pi}(T))\ | \ \mathcal{F}_t^S]$ in (\ref{obj0}) is equivalent to $$\min\limits_{\mathbb{Q}^{\tilde \mu}\in\mathcal{Q}}\mathbb{E}^{\mathbb{Q}^{\tilde \mu}}[U(X^{\pi}(T))\ | \ \mathcal{F}_t^S],$$ i.e., minimizing the expected utility over $\mu$ is equivalent to minimizing it over the equivalent probability measure induced by $\tilde{\mu}$.

Before defining the set of admissible investment strategies, we define the feasible investment strategy. We call an investment process $\pi=\left\{\pi(t): 0 \leq t \leq T\right\}$ feasible if 
\begin{enumerate}
    \item $\pi$ is progressively measurable w.r.t. filtration $\{\mathcal{F}_{t}^{S}\}_{0\leq t \leq T}$.
    \item $\int_0^T\pi^2(t)\mathrm{d} t <\infty,\ \mathbb{Q}^{\tilde \mu}-a.s.,$ for all $\mathbb{Q}^{\tilde \mu}\in\mathcal{Q}$.
\end{enumerate}  We know that $\mathbb{Q}^{\tilde \mu}$ and $\mathbb{P}$ are equivalent, $\forall\ \mathbb{Q}^{\tilde \mu}\in\mathcal{Q}$. Thus, a progressively measurable (relative to $\{\mathcal{F}_{t}^{S}\}_{0\leq t \leq T}$) process $\pi=\left\{\pi(t): 0 \leq t \leq T\right\}$ is feasible if and only if $\int_0^T\pi^2(t)\mathrm{d} t <\infty,\ \mathbb{P}-a.s..$ We denote by $\Pi_0$ the set of all feasible investment strategies. 

Then, given $\tilde \mu\in \mathcal{M}$, the wealth process of the investor with an initial endowment $x_0$ and investment strategy $\pi$ satisfies the following SDE under probability  measure $\mathbb{Q}^{\tilde \mu}$, in terms of $W^{\tilde \mu}$:
\begin{equation*}
	\left\{\begin{array}{l}
		\mathrm{d}X^{\pi}(t)=rX^{\pi}(t)\mathrm{d}t+\pi(t)(\tilde\mu(t)-r)\mathrm{d}t+\sigma\pi(t)\mathrm{d}W^{\tilde \mu}(t),\ t\in [0,T],\\
		X^{\pi}(0)=x_0.
	\end{array}\right.
\end{equation*}
Thus, for any feasible investment process $\pi \in \Pi_0$, the SDE above admits a unique strong solution $X^{\pi}$.
Moreover, the \textit{revealing process} $Y$ satisfies the following SDE under probability  measure $\mathbb{Q}^{\tilde \mu}$, in terms of $W^{\tilde \mu}$:
$$ \mathrm{d} Y(t)=\gamma(t) \sigma^{-2}\left[(\tilde \mu(t)-Y(t))\mathrm{d} t+\sigma \mathrm{d} W^{\tilde \mu}(t)\right],\ t\in [0, T].$$

Let $\Pi \subset \Pi_0$ denote the set of admissible investment strategies. The definition of admissible investment strategy will be given in detail in  Section \ref{Optimal solution}  (see Definition \ref{D4.1}). Thereby, the robust optimal investment problem (\ref{obj0}) at the initial time can be reformulated by searching for the worst-case equivalent probability measure  as follows:  
\begin{equation}
	V(x_0)=\max_{\pi\in\Pi}\min_{\mathbb{Q}^{\tilde \mu}\in\mathcal{Q}}\mathbb{E}^{\mathbb{Q}^{\tilde \mu}}[U(X^{\pi}(T))].
	\label{obj1}
\end{equation}

\section{HJBI equation and related solution}
\label{Solution of the HJBI equation}
In this section, we derive and solve the HJBI equation associated with the robust optimal investment problem \eqref{obj0} and analyze some properties of the corresponding value function.  The solution is expressed in terms of a PDE. In particular, we establish the existence and uniqueness of the PDE solution and provide an estimate for the partial derivative, which plays a key role in the verification theorem. In this system, $\{(X^{\pi}(t), Y(t)):  0\leq t \leq T  \}$ is a Markov process with respect to the filtration $\{\mathcal{F}_{t}^{S}\}_{0 \leq t \leq T}$. Thus, in the optimization problem, the wealth process $X$ and the \textit{revealing process} $Y$ serve as the two state variables. Let the value function at time $t$ be denoted by
$$
V(t,x,y)=\max_{\pi\in\Pi}\min_{\mathbb{Q}^{\tilde \mu}\in\mathcal{Q}}\mathbb{E}^{\mathbb{Q}^{\tilde \mu}}[U(X^{\pi}(T))|X^{\pi}(t)=x,Y(t)=y].
$$
Then we have the following HJBI equation.
\begin{proposition}
    The two-dimensional HJBI equation associated with Problem (\ref{obj0}) is
\begin{equation}
	\left\{\begin{array}{l}
		\sup\limits_{\pi \in \mathbb{R}}\left\{V_t+\frac{1}{2}V_{xx}\pi^2\sigma^2+\frac{1}{2}V_{yy}\frac{\gamma^2(t)}{\sigma^2}+V_{xy}\pi\gamma(t)+V_x r(x-\pi)-V_y\frac{\gamma(t)}{\sigma^2}y\right. \\
		\left.\quad \qquad+\inf\limits_{\mu \in [y-a\sqrt{\gamma(t)},y+a\sqrt{\gamma(t)}]}\{\mu (V_x\pi+V_y\frac{\gamma(t)}{\sigma^2})\}\right\}=0,\\
		V(T,x,y)=U(x).
	\end{array}\right.
	\label{hjbi0}
\end{equation}
\end{proposition}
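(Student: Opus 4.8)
The plan is to derive \eqref{hjbi0} by the standard heuristic dynamic-programming argument, leaving the rigorous issues — smoothness of $V$, admissibility of the resulting candidate controls, and attainment of the extrema — to the verification theorem of Section \ref{Optimal solution}. The starting point is the dynamic programming principle for the reformulated problem \eqref{obj1}: for $0\le t\le t+h\le T$,
\begin{equation*}
V(t,x,y)=\sup_{\pi\in\Pi}\inf_{\mathbb{Q}^{\tilde\mu}\in\mathcal{Q}}\mathbb{E}^{\mathbb{Q}^{\tilde\mu}}\!\left[V\big(t+h,X^{\pi}(t+h),Y(t+h)\big)\,\big|\,X^{\pi}(t)=x,\,Y(t)=y\right],
\end{equation*}
with the maxmin ordering inherited from \eqref{obj0}.

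Assuming $V\in C^{1,2,2}$, I would apply It\^o's formula to $s\mapsto V(s,X^{\pi}(s),Y(s))$ on $[t,t+h]$ under a fixed $\mathbb{Q}^{\tilde\mu}$, using the state dynamics recorded in Section \ref{Problem formation},
\begin{equation*}
\mathrm{d}X^{\pi}(s)=\big(rX^{\pi}(s)+\pi(s)(\tilde\mu(s)-r)\big)\mathrm{d}s+\sigma\pi(s)\,\mathrm{d}W^{\tilde\mu}(s),\qquad
\mathrm{d}Y(s)=\frac{\gamma(s)}{\sigma^{2}}(\tilde\mu(s)-Y(s))\,\mathrm{d}s+\frac{\gamma(s)}{\sigma}\,\mathrm{d}W^{\tilde\mu}(s).
\end{equation*}
Since both equations are driven by the \emph{same} Brownian motion $W^{\tilde\mu}$, one has $(\mathrm{d}X^{\pi})^{2}=\sigma^{2}\pi^{2}\,\mathrm{d}s$, $(\mathrm{d}Y)^{2}=\gamma^{2}(s)\sigma^{-2}\,\mathrm{d}s$ and the cross variation $\mathrm{d}X^{\pi}\,\mathrm{d}Y=\pi\gamma(s)\,\mathrm{d}s$, the last of which produces the mixed term $V_{xy}\pi\gamma(t)$. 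Taking $\mathbb{Q}^{\tilde\mu}$-conditional expectation removes the stochastic integral; dividing by $h$ and letting $h\downarrow 0$ in the dynamic programming principle forces the drift of the generator, optimized over $\pi$ and over the worst $\tilde\mu$, to vanish. Collecting the terms proportional to the instantaneous drift value — exactly $\mu\big(V_x\pi+V_y\gamma(t)/\sigma^{2}\big)$ — and grouping the rest, so that the $rx$ and $-r\pi$ contributions combine into $V_xr(x-\pi)$ while $-V_y\gamma(t)\sigma^{-2}y$ is left isolated, reproduces the bracket in \eqref{hjbi0}; the terminal condition $V(T,x,y)=U(x)$ is immediate from the definition of $V$ since $X^{\pi}(T)=x$ on the conditioning event.

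The one step that genuinely requires care — and the place I expect the difficulty to sit — is the reduction of $\inf_{\mathbb{Q}^{\tilde\mu}\in\mathcal{Q}}$, equivalently of the infimum over the set $\mathcal{M}$ of all $\{\mathcal{F}^S_t\}$-adapted selections $\tilde\mu(s)\in\Lambda_{s,Y(s)}$, to the \emph{static, pointwise} infimum over $\mu\in[\,y-a\sqrt{\gamma(t)},\,y+a\sqrt{\gamma(t)}\,]$ appearing inside \eqref{hjbi0}. This is legitimate because the generator is affine in the current value $\tilde\mu(t)$ and the dependence is purely local in time, so the worst trajectory is obtained by minimizing, for each $(t,\omega)$ separately, the linear map $\mu\mapsto\mu\big(V_x\pi+V_y\gamma(t)/\sigma^{2}\big)$ over the compact interval $\Lambda_{t,Y(t)}$; the minimizer is always one of the two endpoints $y\pm a\sqrt{\gamma(t)}$, hence a measurable function of $(t,Y(t))$, and therefore defines an admissible element of $\mathcal{M}$ (a Filippov-type measurable-selection fact, or here simply the explicit two-case formula). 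It is also at this point that one should note the game is well posed: under the concavity of $V$ in $x$ supplied later by the CARA specification, the bracketed Hamiltonian is concave in $\pi$ and affine in $\mu$ over a compact set, so a minimax theorem (e.g.\ Sion's) gives $\sup_\pi\inf_\mu=\inf_\mu\sup_\pi$ pointwise, consistently with the ``Isaacs'' label. These admissibility and interchange points are precisely what the verification theorem of Section \ref{Optimal solution} discharges.
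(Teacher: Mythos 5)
Your proposal is correct: the paper itself omits this proof entirely (``the derivation is simple and we omit it here''), and your dynamic-programming-principle plus It\^o argument, with the drift terms grouped so that the $\mu$-dependent part collapses to $\mu\bigl(V_x\pi+V_y\gamma(t)/\sigma^{2}\bigr)$ and the pointwise infimum over the compact interval $\Lambda_{t,y}$ replaces the infimum over adapted selections in $\mathcal{M}$, is exactly the standard derivation the authors intend, with rigor correctly deferred to the verification theorem. The quadratic- and cross-variation computations and the terminal condition all check out, so nothing is missing.
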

\begin{proof}
    The derivation is simple and we omit it here.
\end{proof}

In what follows, we provide a solution to the HJBI equation \eqref{hjbi0}. Clearly,
\begin{align*}
	\begin{split}
		\inf_{\mu \in [y-a\sqrt{\gamma(t)},y+a\sqrt{\gamma(t)}]}\{\mu (V_x\pi+V_y\frac{\gamma(t)}{\sigma^2})\}
		&=\begin{cases}
			(y+a\sqrt{\gamma(t)})(V_x\pi\!+\!V_y\frac{\gamma(t)}{\sigma^2}),\ (V_x\pi+V_y\frac{\gamma(t)}{\sigma^2})\leq0,\\
			(y-a\sqrt{\gamma(t)})(V_x\pi\!+\!V_y\frac{\gamma(t)}{\sigma^2}),\  (V_x\pi+V_y\frac{\gamma(t)}{\sigma^2})>0,
		\end{cases}
		\\&=y(V_x\pi+V_y\frac{\gamma(t)}{\sigma^2})-a\sqrt{\gamma(t)}|V_x\pi+V_y\frac{\gamma(t)}{\sigma^2}|.
	\end{split}
\end{align*}
Then the HJBI equation (\ref{hjbi0}) is equivalent to
\begin{equation}
	\left\{\begin{array}{l}
		\sup\limits_{\pi \in \mathbb{R}}\left\{V_t+\frac{1}{2}V_{xx}\pi^2\sigma^2+\frac{1}{2}V_{yy}\frac{\gamma^2(t)}{\sigma^2}+V_{xy}\pi\gamma(t)+V_x r(x-\pi)+yV_x\pi\right. \\
		\left.\quad \qquad  -a\sqrt{\gamma(t)}|V_x\pi+V_y\frac{\gamma(t)}{\sigma^2}|\right\}=0,\\
		V(T,x,y)=U(x).
	\end{array}\right.
	\label{hjbi}
\end{equation}

Suppose that the utility function $U(\cdot)$ is the CARA utility, i.e.,
$$U(x)=-\frac{1}{k}\mathrm{e}^{-kx}, x \in \mathbb{R},$$
where $k>0$ is a constant representing the risk aversion coefficient of the investor.

We guess that a solution $\varphi$  to  \eqref{hjbi} (a candidate value function to Problem \eqref{obj1})   has the following form:
$$\varphi(t,x,y)=-\frac{1}{k}\mathrm{e}^{-k \mathrm{e}^{r(T-t)}x+f(t,y)},$$
where $f(t,y)\in C^{1,2}([0,T]\times\mathbb{R})$.

To solve \eqref{hjbi}, we need to determine the sign of $\varphi_x\pi+\varphi_y\frac{\gamma(t)}{\sigma^2}$. Let $\tilde\pi$ be the zero point of $\varphi_x\pi+\varphi_y\frac{\gamma(t)}{\sigma^2}$, i.e.,

$$\tilde\pi= \frac{f_y\gamma(t)}{k \mathrm{e}^{r(T-t)} \sigma^2}.$$
As $\varphi<0$, we know $\varphi_x=-k\mathrm{e}^{r(T-t)}\varphi>0$. Thus, when $\pi\geq \tilde\pi$, we have $\varphi_x\pi+\varphi_y\frac{\gamma(t)}{\sigma^2}\geq 0$; when $\pi< \tilde\pi$, we have $\varphi_x\pi+\varphi_y\frac{\gamma(t)}{\sigma^2}< 0$.

Let
\begin{equation*}
	\begin{cases}
		\pi_{1}^{*}=-\frac{\varphi_{xy}\gamma(t)-\varphi_x r+\varphi_x(y-a\sqrt{\gamma(t)})}{\varphi_{xx}\sigma^2}= \frac{f_y\gamma(t)-(r-y+a\sqrt{\gamma(t)})}{k \mathrm{e}^{r(T-t)}\sigma^2}=\tilde\pi-\frac{r-y+a\sqrt{\gamma(t)}}{k \mathrm{e}^{r(T-t)} \sigma^2},\\
		\pi_{2}^{*}=-\frac{\varphi_{xy}\gamma(t)-\varphi_x r+\varphi_x(y+a\sqrt{\gamma(t)})}{\varphi_{xx}\sigma^2}= \frac{f_y\gamma(t)-(r-y-a\sqrt{\gamma(t)})}{k \mathrm{e}^{r(T-t)} \sigma^2}=\tilde\pi-\frac{r-y-a\sqrt{\gamma(t)}}{k \mathrm{e}^{r(T-t)} \sigma^2}.
	\end{cases}
\end{equation*}
As $\varphi_{xx}=k^2\mathrm{e}^{2r(T-t)}\varphi<0$, applying the first-order condition, we know that $\pi_{1}^{*}$ is the maximum point of (\ref{hjbi}) when $\varphi_x\pi+\varphi_y\frac{\gamma(t)}{\sigma^2}\geq 0$, and $\pi_{2}^{*}$ is the maximum point of (\ref{hjbi}) when $\varphi_x\pi+\varphi_y\frac{\gamma(t)}{\sigma^2}< 0$.

According to the relationship between $\tilde\pi$, $\pi_{1}^{*}$ and $\pi_{2}^{*}$, we need to distinguish the following three cases based on the relation between $r$ and $\Lambda_{t,y}$. Fig.~\ref{f0} illustrates these three cases by comparing $r$ with $\Lambda_{t,y}$.

 \begin{figure}
 \centering
 {\begin{tikzpicture}
  \draw[very thick,->] (-2.5,0) -- (8,0) node[anchor=north west]{{\large$r$}};
  \draw[very thick,->] (0,-0.25) -- (0,6.5) node[anchor=south east]{{\large$\mu^{*}$}};
  \draw[ultra thick,domain =1:5,dash pattern=on 5pt off 5pt,color=blue] plot(\x,{\x});
  \draw[ultra thick,domain =-2.5:1,color=blue] plot(\x,{1});
  \draw[ultra thick,domain =5:8,color=blue] plot(\x,{5});
  \draw[ultra thick,domain =0:1,loosely dotted,color=blue] plot(\x,{\x});
  \draw[ultra thick,loosely dotted,color=blue] (1,0) -- (1,1);
  \draw[ultra thick,loosely dotted,color=blue] (5,0) -- (5,5);
  \draw[ultra thick,dash pattern=on 5pt off 5pt,color=blue] (1,1) -- (5,1);
  \draw[ultra thick,dash pattern=on 5pt off 5pt,color=blue] (1,5) -- (5,5);
  \draw[ultra thick,loosely dotted,color=blue] (1,1) -- (1,5);
  \fill (1,0)[color=blue]  circle (0.1);
  \node at (1,-0.75) [below] {{\small($y-a\sqrt{\gamma(t)}$)}};
  \node at (1,-0.2) [below] {{\large$\mu_{t,y}^{\text{min}}$}};
  \fill (5,0)[color=blue]  circle (0.1);
  \node at (5,-0.75) [below]{\small{($y+a\sqrt{\gamma(t)}$)}};
  \node at (5,-0.2) [below] {{\large$\mu_{t,y}^{\text{max}}$}};
  \fill[color=blue] (3,0) circle (0.1);
  \node at (3,-0.2) [below] {{\large$y$}};
  \fill (-1,1) circle (0);
  \node at (-1,1) [above] {{\large$\pi_1^{*}$}};
  \fill (3,3) circle (0);
  \node at (3,3) [above] {{\large$\tilde\pi$}};
  \fill (6.5,5) circle (0);
  \node at (6.5,5) [above] {{\large$\pi_2^{*}$}};
\end{tikzpicture}}
\caption{Worst-case scenario for $\mu$ and the corresponding robust optimal strategy.} \label{f0}
\end{figure}
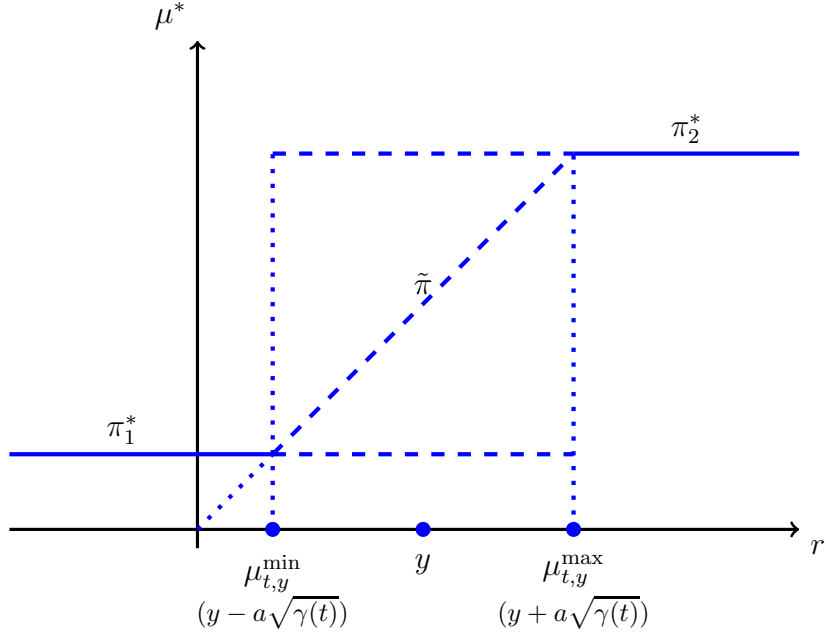

\begin{itemize}
	\item When $ r <\mu_{t,y}^{\text{min}}= y - a\sqrt{\gamma(t)} $, we have that $ \pi_{2}^{*} \geq \pi_{1}^{*} > \tilde{\pi} $. Thus, $ \sup\limits_{\pi \geq \tilde{\pi}} \{\cdot\} $ in \eqref{hjbi} is attained at $ \pi_{1}^{*} $, and $ \sup\limits_{\pi \leq \tilde{\pi}} \{\cdot\} $ in \eqref{hjbi} is attained at $ \tilde{\pi} $. Consequently, the overall supremum $ \sup\limits_{\pi \in \mathbb{R}} \{\cdot\} $ in \eqref{hjbi} is attained at $ \pi_{1}^{*} $.  Substituting $\pi^{*}=\pi_{1}^{*}$ and the expression of $\varphi$ into (\ref{hjbi}),  we obtain the PDE that $f(t,y)$  satisfies
	\begin{equation*}
		\begin{cases}
			f_{t}(t,y)+\frac{1}{2}f_{yy}(t,y)\frac{\gamma^2(t)}{\sigma^2}+f_{y}(t,y)\frac{\gamma(t)(r-y)}{\sigma^2}-\frac{(r-y+a\sqrt{\gamma(t)})^2}{2\sigma^2} =0,\\
			f(T,y)=0 .
		\end{cases}
	\end{equation*}
 
        In this case, the risk-free rate is lower than the minimum value of $\Lambda_{t,y}$, prompting the investor to take a long position in the risky asset. The worst-case scenario for $\mu$ occurs when $\mu^* = \mu_{t,y}^{\text{min}}$. From the expression for $\pi_1^*$, we observe that ambiguity reduces the long position in the risky asset by $\frac{a\sqrt{\gamma(t)}}{k \mathrm{e}^{r(T-t)} \sigma^2}$, resulting in a more conservative strategy.
         
	\item When $r>\mu_{t,y}^{\text{max}}=y+a\sqrt{\gamma(t)}$, we have $\tilde\pi > \pi_{2}^{*} \geq\pi_{1}^{*} $, $\sup\limits_{\pi \geq \tilde \pi}\{\cdot \}$ in \eqref{hjbi} is attained at  $ \tilde\pi$, $\sup\limits_{\pi \leq \tilde \pi}\{\cdot \}$ in \eqref{hjbi} is attained at  $\pi_{2}^{*}$. Consequently, the overall supremum $\sup\limits_{\pi \in \mathbb{R}}\{\cdot \}$ in \eqref{hjbi} is attained at  $ \pi_{2}^{*}$. Substituting $\pi^{*}=\pi_{2}^{*}$ and the expression of $\varphi$ into (\ref{hjbi}),  we obtain the PDE that $f(t,y)$  satisfies
	\begin{equation*}
		\left\{\begin{array}{l}
			f_{t}(t,y)+\frac{1}{2}f_{yy}(t,y)\frac{\gamma^2(t)}{\sigma^2}+f_{y}(t,y)\frac{\gamma(t)(r-y)}{\sigma^2}-\frac{(r-y-a\sqrt{\gamma(t)})^2}{2\sigma^2} =0,\\
			f(T,y)=0.
		\end{array}\right.
	\end{equation*}

In this case, the risk-free rate is higher than the maximum value of $\Lambda_{t,y}$, prompting the investor to take a short position in the risky asset. The worst-case scenario for $\mu$ occurs when $\mu^* = \mu_{t,y}^{\text{max}}$. From the expression for $\pi_2^*$, we observe that ambiguity increases the short position in the risky asset by $\frac{a\sqrt{\gamma(t)}}{k \mathrm{e}^{r(T-t)} \sigma^2}$, also leading to a more conservative strategy.

	\item When ${r}\in\Lambda_{t,y}=[y-a\sqrt{\gamma(t)},y+a\sqrt{\gamma(t)}]$, we have that  $ \pi_{2}^{*} \ge \tilde\pi \ge\pi_{1}^{*} $, $\sup\limits_{\pi \geq \tilde \pi}\{\cdot \}$ in \eqref{hjbi} is attained at  $ \tilde\pi$, $\sup\limits_{\pi \leq \tilde \pi}\{\cdot \}$ in \eqref{hjbi} is attained at  $\tilde\pi $, thus $\sup\limits_{\pi \in \mathbb{R}}\{\cdot \}$ in \eqref{hjbi} is attained at  $ \tilde\pi $. Substituting $\pi^{*}=\tilde\pi$ and the expression of $\varphi$ into (\ref{hjbi}),  we obtain that the PDE that $f(t,y)$  satisfies  is as follows:  
	\begin{equation*}
		\left\{\begin{array}{l}
			f_{t}(t,y)+\frac{1}{2}f_{yy}(t,y)\frac{\gamma^2(t)}{\sigma^2}+f_{y}(t,y)\frac{\gamma(t)(r-y)}{\sigma^2}=0,\\
			f(T,y)=0 .
		\end{array}\right.
	\end{equation*}

In this case, $r$ is within the confidence set $\Lambda_{t,y}$. The investor may take either a short or long position in the stock, depending on her belief about $\mu$. The worst-case scenario for $\mu$ occurs for any $\mu^* \in \Lambda_{t,y}$. Subsequently, the myopic demand diminishes, leaving the investor with solely a hedging demand for the risky asset.
 
\end{itemize}

Summarizing the above statements, we have the following proposition.
\begin{proposition}\label{prop}
	A solution to the HJBI equation (\ref{hjbi0}) is given by 
	$$\varphi(t,x,y)=-\frac{1}{k}\mathrm{e}^{-k \mathrm{e}^{r(T-t)}x+f(t,y)},$$
	where $f(t,y)$ satisfies the following PDE:
	\begin{equation}
		\left\{\begin{array}{l}
			f_{t}(t,y)+\frac{1}{2}f_{yy}(t,y)\frac{\gamma^2(t)}{\sigma^2}+f_{y}(t,y)\frac{\gamma(t)(r-y)}{\sigma^2}\\
			-\left[\frac{(r-y+a\sqrt{\gamma(t)})^2}{2\sigma^2}\mathrm{I}_{\{r-y+a\sqrt{\gamma(t)}\leq 0\}} +\frac{(r-y-a\sqrt{\gamma(t)})^2}{2\sigma^2}\mathrm{I}_{\{r-y-a\sqrt{\gamma(t)}\geq 0\}}\right]=0,\quad (t,y)\in[0,T)\times\mathbb{R},\\
			f(T,y)=0 ,\quad y\in \mathbb{R}.
		\end{array}\right.
		\label{pde1}
	\end{equation}
	
	The worst-case scenario for $\mu$ is given by
 {
	\begin{equation*}
		\mu^*=
		\begin{cases}
			\mu_{t,y}^{\text{min}}, \qquad \quad \qquad {r }< \mu_{t,y}^{\text{min}},\\
   \text{any } \mu\in \Lambda_{t,y} ,\qquad  {r }\in\Lambda_{t,y},\\
			\mu_{t,y}^{\text{max}}, \quad \qquad \qquad {r }> \mu_{t,y}^{\text{max}}.
		\end{cases}
	\end{equation*}
	}
	The suprema of the HJBI equation (\ref{hjbi0}) is given as follows:
	\begin{equation}\label{equ:pihat}
		\begin{aligned}
			\hat\pi(t , y) = \begin{cases} & {\mathrm{e}^{-r(T-t)}\over k\sigma^2}\left[{f_y(t,y)\gamma(t)}+y-{a\sqrt{\gamma(t)}-r}\right]
				,  \qquad {r }< \mu_{t,y}^{\text{min}}, \\&
				{\mathrm{e}^{-r(T-t)}\over k\sigma^2}{f_y(t,y)\gamma(t)}
				,\qquad  \qquad \qquad \qquad\qquad\quad{r }\in\Lambda_{t,y},\\&
				{\mathrm{e}^{-r(T-t)}\over k\sigma^2}\left[{f_y(t,y)\gamma(t)}+y+a\sqrt{\gamma(t)}-r\right]
				,\qquad {r }> \mu_{t,y}^{\text{max}}. \end{cases} \\
		\end{aligned}
	\end{equation}
\end{proposition}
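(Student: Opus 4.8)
The plan is to treat this as a verification result: \emph{given} a function $f\in C^{1,2}([0,T]\times\mathbb{R})$ that solves the PDE \eqref{pde1} (existence being addressed separately), one checks that $\varphi(t,x,y)=-\frac1k\mathrm{e}^{-k\mathrm{e}^{r(T-t)}x+f(t,y)}$ solves \eqref{hjbi0}, and that the displayed $\mu^{*}$ and $\hat\pi$ realize, respectively, the inner infimum over $\mu$ and the outer supremum over $\pi$. I would work throughout with the equivalent form \eqref{hjbi}, in which the infimum over $\mu$ has already been carried out and replaced by $-a\sqrt{\gamma(t)}\,\bigl|V_x\pi+V_y\gamma(t)/\sigma^2\bigr|$.

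First I would compute the partial derivatives of the ansatz: $\varphi_t=(kr\mathrm{e}^{r(T-t)}x+f_t)\varphi$, $\varphi_x=-k\mathrm{e}^{r(T-t)}\varphi$, $\varphi_{xx}=k^2\mathrm{e}^{2r(T-t)}\varphi$, $\varphi_y=f_y\varphi$, $\varphi_{yy}=(f_{yy}+f_y^2)\varphi$, $\varphi_{xy}=-k\mathrm{e}^{r(T-t)}f_y\varphi$. Since $\varphi<0$, this gives $\varphi_x>0$ and $\varphi_{xx}<0$, so the bracketed expression in \eqref{hjbi}, viewed as a function of $\pi$, is the sum of a strictly concave quadratic and the concave map $\pi\mapsto -a\sqrt{\gamma(t)}\,|\varphi_x\pi+\varphi_y\gamma(t)/\sigma^2|$; hence it is strictly concave and its supremum is attained at a unique point. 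It is piecewise quadratic with a single kink at $\tilde\pi=f_y\gamma(t)/(k\mathrm{e}^{r(T-t)}\sigma^2)$, the unique zero of $\varphi_x\pi+\varphi_y\gamma(t)/\sigma^2$: on $\{\pi\ge\tilde\pi\}$ it coincides with a concave quadratic with vertex $\pi_1^{*}$, and on $\{\pi\le\tilde\pi\}$ with a concave quadratic with vertex $\pi_2^{*}$, the two vertices being as displayed before the proposition.

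The main step, and the one requiring care, is the case analysis locating the global maximizer; it is governed entirely by the position of $r$ relative to $\Lambda_{t,y}=[\,y-a\sqrt{\gamma(t)},\,y+a\sqrt{\gamma(t)}\,]$. Since $a\ge 0$ one always has $\pi_1^{*}\le\pi_2^{*}$, while $\pi_1^{*}-\tilde\pi$ and $\pi_2^{*}-\tilde\pi$ carry the signs of $-(r-y+a\sqrt{\gamma(t)})$ and $-(r-y-a\sqrt{\gamma(t)})$ respectively. This yields three mutually exclusive configurations: (i) $r<\mu_{t,y}^{\text{min}}$ forces $\tilde\pi<\pi_1^{*}\le\pi_2^{*}$; (ii) $r>\mu_{t,y}^{\text{max}}$ forces $\pi_1^{*}\le\pi_2^{*}<\tilde\pi$; (iii) $r\in\Lambda_{t,y}$ forces $\pi_1^{*}\le\tilde\pi\le\pi_2^{*}$. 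In each case, using strict concavity and continuity at the kink, the global maximizer is $\pi_1^{*}$, $\pi_2^{*}$ and $\tilde\pi$ respectively: in (i) and (ii) the vertex belonging to the opposite branch lies outside that branch, so that branch's supremum is attained at the kink and is dominated; in (iii) both vertices lie on the wrong side of the kink, so the supremum is exactly at $\tilde\pi$. This establishes \eqref{equ:pihat}. The worst-case $\mu$ is then read off from the sign of $\varphi_x\hat\pi+\varphi_y\gamma(t)/\sigma^2$ at the maximizer: strictly positive in (i), so the inner infimum is at $\mu_{t,y}^{\text{min}}$; strictly negative in (ii), giving $\mu_{t,y}^{\text{max}}$; and zero in (iii), where the term $\mu(\varphi_x\pi+\varphi_y\gamma(t)/\sigma^2)$ vanishes identically and every $\mu\in\Lambda_{t,y}$ attains the infimum — precisely the stated $\mu^{*}$.

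Finally I would substitute the maximizer, together with $\varphi$ and its partials, back into \eqref{hjbi} and divide through by the strictly positive quantity $-k\mathrm{e}^{r(T-t)}\varphi$. The term $kr\mathrm{e}^{r(T-t)}x\,\varphi$ from $\varphi_t$ cancels the term $-rk\mathrm{e}^{r(T-t)}x\,\varphi$ from $V_xrx$, so no $x$-dependence survives, leaving a one-dimensional second-order parabolic equation for $f$: with inhomogeneous term $-(r-y+a\sqrt{\gamma(t)})^2/(2\sigma^2)$ in case (i), $-(r-y-a\sqrt{\gamma(t)})^2/(2\sigma^2)$ in case (ii), and none in case (iii). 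Since the events $\{r-y+a\sqrt{\gamma(t)}\le 0\}$ and $\{r-y-a\sqrt{\gamma(t)}\ge 0\}$ are disjoint and exactly cover cases (i) and (ii) (neither holding in case (iii)), the three equations merge into the single PDE \eqref{pde1}, and the terminal condition $\varphi(T,x,y)=U(x)$ follows immediately from $f(T,y)=0$. The only genuinely delicate point is the concavity/ordering bookkeeping of the third paragraph; the rest is routine differentiation and algebra.
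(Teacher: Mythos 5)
Your proposal is correct and follows essentially the same route as the paper: reduce \eqref{hjbi0} to \eqref{hjbi} by carrying out the inner infimum, locate the kink $\tilde\pi$ and the two branch vertices $\pi_1^{*},\pi_2^{*}$, split into the three cases according to the position of $r$ relative to $\Lambda_{t,y}$, identify the maximizer in each case, and substitute back to obtain the single PDE \eqref{pde1} with the two disjoint indicator terms. Your explicit strict-concavity argument for uniqueness of the maximizer is a slightly more careful packaging of the ordering bookkeeping the paper performs, but the substance is identical.
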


It is important to note that the suprema \(\hat{\pi}(t, y)\) of the HJBI equation (\ref{hjbi0}) are independent of wealth \(x\). The classification of \(\hat{\pi}(t, y)\) into three cases depends on the relationship between \(r\) and the confidence set \(\Lambda_{t,y}\). As shown in \eqref{equ:pihat}, the suprema \(\hat{\pi}(t, y)\) consists of two components: a myopic demand in the worst-case scenario and a hedging demand. When \(r < \mu_{t,y}^{\text{min}}\), the worst-case scenario is given by \(\mu^* =\mu_{t,y}^{\text{min}}\ ( y - a\sqrt{\gamma(t)})\), and the myopic demand is derived by substituting \(\mu^* = y - a\sqrt{\gamma(t)}\) into Merton's portfolio selection problem. Conversely, when \(r > \mu_{t,y}^{\text{max}}\), the worst-case scenario is \(\mu^* = \mu_{t,y}^{\text{max}}\ (y + a\sqrt{\gamma(t)})\), with the myopic demand obtained similarly by substituting \(\mu^* = y + a\sqrt{\gamma(t)}\) into Merton's framework. Finally, when \(r \in \Lambda_{t,y}\), the worst-case scenario can occur for any \(\mu \in \Lambda_{t,y}\), resulting in the investor only exhibiting hedging demand.

The following theorem demonstrates the existence and uniqueness of the solution for the Cauchy problem  (\ref{pde1}) and provides an estimate for the partial derivative. 
\begin{theorem}
	\label{theorem1}
	The Cauchy problem (\ref{pde1}) has a unique solution of class $C^{1,2}([0,T)\times\mathbb{R})\cap C([0,T]\times\mathbb{R})$, which satisfies the polynomial growth condition
	$$\max_{0\leq t \leq T}|f(t,y)|\leq C_1 (1+y^2),\quad \forall y\in \mathbb{R},$$
	for some constant $C_1>0$.
	Moreover, the partial derivative $f_y(t,y)$ satisfies  
	$$\max_{0\leq t \leq T}|f_{y}(t,y)|\leq C_2 (1+|y|),\quad \forall y\in \mathbb{R},$$
	for some constant $C_2>0$.
\end{theorem}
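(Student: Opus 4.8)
The plan is to treat \eqref{pde1} as a Cauchy problem for a one–dimensional linear second-order parabolic equation after the standard time reversal $\tau = T - t$. Writing $u(\tau, y) = f(T - \tau, y)$, the equation becomes $u_\tau = \tfrac12 a(\tau) u_{yy} + b(\tau, y) u_y - c(\tau, y)$ with $u(0, y) = 0$, where $a(\tau) = \gamma(T-\tau)^2/\sigma^2$ is bounded and bounded away from zero on $[0, T]$ (since $\gamma$ is continuous and strictly positive), $b(\tau, y) = \gamma(T-\tau)(r - y)/\sigma^2$ is affine in $y$ with bounded-in-$\tau$ coefficients, and the source $c(\tau, y)$ is the bracketed term in \eqref{pde1}, which is continuous, nonnegative, piecewise quadratic in $y$, hence has at most quadratic growth in $y$ uniformly in $\tau$. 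So this is exactly the setting of classical existence/uniqueness theory for parabolic Cauchy problems with unbounded (but at most linearly growing) drift and at most quadratically growing free term; I would invoke the results in Friedman's \emph{Partial Differential Equations of Parabolic Type} (or Ladyzhenskaya–Solonnikov–Uraltseva), which give a unique classical solution of class $C^{1,2}((0,T]\times\mathbb R)\cap C([0,T]\times\mathbb R)$ in the class of functions satisfying a polynomial (here quadratic) growth bound, provided the coefficients are Hölder continuous in $(\tau,y)$ on compacts — which holds here since $\gamma$ is smooth in $t$ and the only nonsmoothness of $c$ is along the two curves $r - y \pm a\sqrt{\gamma(T-\tau)} = 0$, across which $c$ is still $C^1$, so $c$ is locally Lipschitz, in particular locally Hölder. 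Uniqueness in the quadratic-growth class follows from the Tychonoff-type maximum principle (e.g. via the comparison function $e^{\lambda(\tau)(1+y^2)}$ for suitable $\lambda$).

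To get the explicit growth bound $|f(t,y)| \le C_1(1 + y^2)$ I would use a probabilistic (Feynman–Kac) representation together with moment estimates for the underlying diffusion. Concretely, $u(\tau, y) = \mathrm E\big[\int_0^\tau c(\tau - s, \Xi^{\tau,y}_s)\,\mathrm d s\big]$, where $\Xi^{\tau,y}$ solves $\mathrm d\Xi_s = b(\tau - s, \Xi_s)\,\mathrm d s + \sqrt{a(\tau - s)}\,\mathrm d B_s$, $\Xi_0 = y$; this is a time-inhomogeneous Ornstein–Uhlenbeck process (the drift in $y$ is $-\gamma(\cdot)/\sigma^2$ times $y$ plus a bounded term), so one has the standard estimate $\mathrm E\big[\sup_{0\le s\le \tau}|\Xi^{\tau,y}_s|^2\big] \le K(1 + y^2)$ with $K$ depending only on $T$ and the (bounded) coefficients. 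Since $0 \le c(\sigma, z) \le C(1 + z^2)$, integrating gives $0 \le u(\tau, y) \le C\,T\,\big(1 + K(1+y^2)\big) \le C_1(1 + y^2)$, uniformly in $\tau \in [0,T]$, which is the claimed bound on $f$. (Equivalently, one can verify directly that $C_1(1+y^2)$ for large $C_1$ is a supersolution and $0$ a subsolution, and conclude by the comparison principle — this avoids the stochastic representation but is the same computation in disguise.)

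The derivative estimate $|f_y(t,y)| \le C_2(1 + |y|)$ is the part I expect to require the most care. Formally $v := u_y$ satisfies the linearized equation $v_\tau = \tfrac12 a v_{yy} + b v_y + b_y v - c_y$, with $v(0,y) = 0$, where $b_y(\tau) = -\gamma(T-\tau)/\sigma^2$ is bounded and $c_y(\tau, y)$ grows at most linearly in $y$ (differentiating the piecewise quadratic source; note $c$ is genuinely $C^1$ so $c_y$ is continuous). This is again a linear parabolic Cauchy problem with the same good coefficients and a linearly growing source and a zero-order term $b_y v$ with bounded coefficient, so the Feynman–Kac formula gives $v(\tau, y) = \mathrm E\big[\int_0^\tau e^{\int_0^s b_y(\tau - r)\,\mathrm d r}\,(-c_y)(\tau - s, \Xi^{\tau,y}_s)\,\mathrm d s\big]$; the exponential weight is bounded since $b_y$ is bounded, $|c_y| \le C(1 + |z|)$, and $\mathrm E\big[\sup_s |\Xi^{\tau,y}_s|\big] \le K'(1 + |y|)$, so the same integration yields $|v(\tau, y)| \le C_2(1 + |y|)$ uniformly in $\tau$. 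The technical subtlety is justifying that $u_y$ indeed solves this equation and that the Feynman–Kac representation is valid despite the unbounded coefficients — this I would handle either by first working on bounded domains (with barriers at $\pm N$) and passing to the limit using interior Schauder estimates to control $u_y$ locally uniformly, or by appealing directly to the interior gradient estimates for parabolic equations in Friedman, combined with the a priori quadratic bound on $u$ already established. The one genuinely delicate point worth flagging: the source term $c$ in \eqref{pde1} is only $C^1$ (not $C^2$) in $y$, so $u$ itself is $C^{1,2}$ but $u_y$ is not automatically $C^{1,2}$; nonetheless $c_y$ is Lipschitz off the two switching curves and continuous everywhere, which is more than enough regularity for the representation and the estimate above to go through, and the stated conclusion only asks for the bound on $f_y$, not further smoothness.
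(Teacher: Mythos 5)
Your existence, uniqueness and quadratic-growth argument is essentially the paper's: the paper also reverses time (and additionally flips $y\mapsto r-y$), observes that the operator is uniformly parabolic with an affine drift and a $C^{0,1}$, quadratically growing source, invokes the fundamental-solution theory for parabolic equations with unbounded coefficients (Besala 1975, rather than Friedman/LSU, but to the same effect) to get a classical solution with the pointwise bound $|u(t,y)|\le Kth(t,y)$, and then passes to the Feynman--Kac representation $f(t,y)=-\mathbb{E}^{\mathbb{P}}[\int_t^T\tilde g(s,Y^{t,y}(s))\,\mathrm{d}s]$ with uniqueness from the maximum principle in the polynomial-growth class. (Minor slip: your representation for $u$ is missing a sign, since the source enters as $-c$; this does not affect the absolute-value bounds.) Where you genuinely diverge is the derivative estimate. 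You differentiate the PDE and work with the linearized equation for $v=u_y$, which forces you to confront exactly the regularity issue you flag --- $c$ is only $C^1$ in $y$, so $u_y$ need not be a classical solution of the linearized problem, and your proposed fixes (bounded domains plus interior Schauder, or interior gradient estimates) would still need extra work to turn interior bounds into the stated \emph{global} linear-growth bound. The paper avoids all of this: it never differentiates the equation, but instead bounds the increment $|f(t,y)-f(t,z)|$ directly from the stochastic representation, applying the mean value theorem to $\tilde g$ (using only $\tilde g\in C^{0,1}$ with $|\tilde g_y|\le N_1(1+|y|)$), then Cauchy--Schwarz together with the moment estimates $\mathbb{E}[\max_s|Y^{t,y}(s)|^2]\lesssim 1+y^2$ and $\mathbb{E}[\max_s|Y^{t,y}(s)-Y^{t,z}(s)|^2]\lesssim|y-z|^2$ (citing Yong--Zhou), to obtain $|f(t,y)-f(t,z)|\le N_5(1+|y|\vee|z|)|y-z|$ and hence $|f_y(t,y)|\le C_2(1+|y|)$. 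That Lipschitz-increment route buys you the derivative bound with no discussion of the regularity of $u_y$ at all, and is the cleaner way to close the one gap your proposal leaves open; your route is workable but strictly harder, and as written the justification that $u_y$ solves the linearized equation (and that the Feynman--Kac formula applies to it globally) is not complete.
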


\begin{proof}
	Let $u(t,y)=f(T-t,r-y)$. We obtain the PDE that $u(t,y)$  satisfies 
	\begin{equation}
		\left\{\begin{array}{l}
			u_{t}(t,y)-\frac{1}{2}u_{yy}(t,y)\frac{\gamma^2(T-t)}{\sigma^2}+u_{y}(t,y)\frac{\gamma(T-t)y}{\sigma^2}\\
			=-\left[\frac{(y+a\sqrt{\gamma(T-t)})^2}{2\sigma^2}\mathrm{I}_{\{y+a\sqrt{\gamma(T-t)}\leq 0\}} +\frac{(y-a\sqrt{\gamma(T-t)})^2}{2\sigma^2}\mathrm{I}_{\{y-a\sqrt{\gamma(T-t)}\geq 0\}}\right],\quad (t,y)\in(0,T]\times\mathbb{R},\\
			u(0,y)=0 ,\quad y\in \mathbb{R}.
		\end{array}\right.
		\label{pde2}
	\end{equation}
	\eqref{pde2}  represents a Cauchy problem for a one-dimensional linear second-order parabolic equation with unbounded coefficients. 
	
	Let $$\mathrm{L}(u): = u_{yy}\frac{\gamma^2(T-t)}{2\sigma^2}-u_{y}\frac{\gamma(T-t)y}{\sigma^2}-u_t.$$
	Denote
	$$g(t,y)=\frac{(y+a\sqrt{\gamma(T-t)})^2}{2\sigma^2}\mathrm{I}_{\{y+a\sqrt{\gamma(T-t)}\leq 0\}} +\frac{(y-a\sqrt{\gamma(T-t)})^2}{2\sigma^2}\mathrm{I}_{\{y-a\sqrt{\gamma(T-t)}\geq 0\}},$$
	then $g(t,y)\in C^{0,1} ([0,T]\times\mathbb{R})$.
	
	Because $0<\gamma(T)\leq \gamma(t) \leq \sigma_0^2,\  \forall t \in [0,T]$, the   operator $\mathrm{L}$ is uniformly parabolic in $[0,T]\times \mathbb{R}$. Denote $h(t,y)=M_1\mathrm{e}^{M_2 t}(1+y^2)$, where $M_1>0$, $M_2>0$ are sufficiently large constants. And choose constant $K>0$ such that $$|g(t,y)|\leq Kh(t,y),\ \forall (t,y)\in[0,T]\times\mathbb{R}.$$ We can verify that the assumptions of \citet[Theorems 1 and 3]{besalao1975on}  are satisfied. As such,  there exists a fundamental solution $\Gamma(t,y;\tau,\xi)$ of the parabolic equation $\mathrm{L}(u)=0$, and 
	$$u(t,y)=-\int_0^t \mathrm{d}\tau \int_{\mathbb{R}}\Gamma(t,y;\tau,\xi)g(\tau,\xi)\mathrm{d}\xi$$
	is a solution of class $C^{1,2}((0,T]\times\mathbb{R})\cap C([0,T]\times\mathbb{R})$ for the Cauchy problem (\ref{pde2}). Moreover, we have the pointwise estimate
	$$|u(t,y)|\leq Kth(t,y),\quad \forall (t,y)\in[0,T]\times\mathbb{R}.$$
	Therefore, we have proved that the  Cauchy problem (\ref{pde1}) has a solution $f(t,y)=u(T-t,r-y)$ of class $C^{1,2}([0,T)\times\mathbb{R})\cap C([0,T]\times\mathbb{R})$, and there exists a constant $C_1>0$ such that $$|f(t,y)|=|u(T-t,r-y)|\leq K(T-t)h(T-t,r-y)\leq C_1 (1+y^2),\  \forall (t,y)\in[0,T]\times\mathbb{R}.$$
	
	Next, we  apply the Feynman-Kac formula (see  \citet[Theorems 5.7.6]{karatzas2014brownian}) to obtain the stochastic representation of $f(t,y)$:	\begin{equation}
		f(t,y)=-\mathbb{E}^{\mathbb{P}}\left[\int_t^T\tilde g(s,Y^{t,y}(s))\mathrm{d}s\right],\quad \forall (t,y)\in[0,T]\times\mathbb{R},
		\label{storep}	
	\end{equation}
	where \begin{equation}\label{equ:gt}
		\tilde g(t,y)=g(T-t,r-y),
	\end{equation}and $Y^{t,y}=\{Y^{t,y}(s):t\leq s\leq T\}$ is the unique strong solution of the SDE:
	\begin{equation*}
		\left\{\begin{array}{l}
			\mathrm{d}Y^{t,y}(s)=\frac{\gamma(s)(r-Y^{t,y}(s))}{\sigma^2}\mathrm{d}s+\frac{\gamma(s)}{\sigma}\mathrm{d}W(s),\ s\in [t, T],\\
			Y^{t,y}(t)=y.
		\end{array}\right.
	\end{equation*} 
	In particular, the solution $f(t,y)$ satisfying the polynomial growth condition is unique, and this uniqueness can also be established by using the maximum principle for parabolic equations.
	
	Finally, we complete the proof by presenting the estimate for $f_y$. Using the stochastic representation given in (\ref{storep}), we obtain
	\begin{equation*}
		\begin{aligned}
			&|f(t,y)-f(t,z)|=\left|\mathbb{E}^{\mathbb{P}}\left[\int_t^T\tilde g(s,Y^{t,y}(s))-\tilde g(s,Y^{t,z}(s))\mathrm{d}s\right]\right|\\
			&=\left|\mathbb{E}^{\mathbb{P}}\left[\int_t^T\tilde g_y(s,\eta^{t,z,y}(s))(Y^{t,y}(s)-Y^{t,z}(s))\mathrm{d}s\right]\right|\\
			&\leq \mathbb{E}^{\mathbb{P}}\left[\int_t^T|\tilde g_y(s,\eta^{t,z,y}(s))||Y^{t,y}(s)-Y^{t,z}(s)|\mathrm{d}s\right]\\
			&\leq \mathbb{E}^{\mathbb{P}}\left[\int_t^TN_1\left(1+|Y^{t,y}(s)|+|Y^{t,z}(s)|\right)|Y^{t,y}(s)-Y^{t,z}(s)|\mathrm{d}s\right]\\
			&\leq TN_1\mathbb{E}^{\mathbb{P}}\left[\left(1+\max_{t\leq s\leq T}|Y^{t,y}(s)|+\max_{t\leq s\leq T}|Y^{t,z}(s)|\right)\max_{t\leq s\leq T}\left(|Y^{t,y}(s)-Y^{t,z}(s)|\right)\right]\\
			&\leq N_2 \left\{\mathbb{E}^{\mathbb{P}}\left[\left(1+\max_{t\leq s\leq T}|Y^{t,y}(s)|+\max_{t\leq s\leq T}|Y^{t,z}(s)|\right)^2\right]\mathbb{E}^{\mathbb{P}}\left[\max_{t\leq s\leq T}\left(|Y^{t,y}(s)-Y^{t,z}(s)|\right)^2\right]\right\}^{\frac{1}{2}}\\
			&\leq N_3 \left\{\mathbb{E}^{\mathbb{P}}\left[\left(1+\max_{t\leq s\leq T}|Y^{t,y}(s)|^2+\max_{t\leq s\leq T}|Y^{t,z}(s)|^2\right)\right]\mathbb{E}^{\mathbb{P}}\left[\max_{t\leq s\leq T}\left(|Y^{t,y}(s)-Y^{t,z}(s)|\right)^2\right]\right\}^{\frac{1}{2}}\\
			&\leq N_4 \left[\left(1+|y|^2+|z|^2\right)|y-z|^2\right]^{\frac{1}{2}}\\
			&\leq N_5\left(1+|y|\vee|z|\right)|y-z|,
		\end{aligned}
	\end{equation*}
	where $\eta^{t,z,y}(s)$ is between $Y^{t,y}(s)$ and $Y^{t,z}(s)$, $\forall t\in [0,T]$, $y,z \in \mathbb{R}$, and $N_i>0, \ i=1,\cdots ,5, $ are some positive constants. The last but one inequality holds based on \citet[Theorem 1.6.3]{yong1999sto}.
 
	Thus, we obtain
	$$\max_{0\leq t \leq T}|f_{y}(t,y)|\leq C_2 (1+|y|),\quad \forall\ y\in \mathbb{R},$$
	for some constant $C_2>0$.
\end{proof}

According to the above discussions, we have that $\varphi(t,x,y)$ is a solution of the HJBI equation (\ref{hjbi0}). The candidate robust optimal strategy and worst-case scenario are also given in Proposition \ref{prop}. Next, we analyze the signs of $f(t,y)$, $f_y(t,y)$ and the candidate robust optimal strategy $\hat{\pi}(t,y)$.

\begin{theorem}\label{thm:ffy}
The solution $f(t,y)$ to the Cauchy problem (\ref{pde1}) satisfies
$$f(t,y)<0, \ \forall\   t\in[0,T).$$
Besides, $f_y(t,y)$ is monotonically decreasing with respect to $y$ when $y\in \mathbb{R},\ t\in[0,T)$ and satisfies
\begin{equation}\label{equ: fy}
		\begin{aligned}
			f_y(t , y)  \begin{cases}  >0 ,\qquad y<r, \ t\in[0,T) , \\
				=0, \qquad  y=r, \ t\in[0,T], \quad\text{or} \quad y\in \mathbb{R}, \ t=T,\\
				 <0 ,\qquad  y>r, \ t\in[0,T) . \end{cases} \\
		\end{aligned}
	\end{equation}
\end{theorem}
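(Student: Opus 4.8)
The plan is to base the whole argument on the Feynman--Kac representation \eqref{storep}, $f(t,y)=-\mathbb{E}^{\mathbb{P}}\!\big[\int_t^T\tilde g(s,Y^{t,y}(s))\,\mathrm{d}s\big]$, exploiting three elementary features. First, by \eqref{equ:gt}, viewed as a function of $y$ the integrand $\tilde g(s,\cdot)$ is nonnegative, of class $C^1$, convex, symmetric about $y=r$, vanishes exactly on the band $\{|y-r|\le a\sqrt{\gamma(s)}\}$, and has a.e.\ second derivative $\tilde g_{yy}(s,w)=\sigma^{-2}\mathrm{I}_{\{|w-r|>a\sqrt{\gamma(s)}\}}$ (a bounded function, since $\tilde g_y(s,\cdot)$ is continuous and piecewise linear). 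Second, because the diffusion coefficient in the SDE for $Y^{t,y}$ is state-independent, the flow is affine in the initial datum: $Y^{t,y}(s)=\psi(t,s)\,y+\zeta(t,s)$ with $\psi(t,s)=\exp\!\big(-\int_t^s\gamma(u)\sigma^{-2}\,\mathrm{d}u\big)>0$ deterministic and $\zeta(t,s)$ independent of $y$, so that $\partial_yY^{t,y}(s)=\psi(t,s)$ and $\partial_{yy}Y^{t,y}(s)=0$. Third, for $s>t$ the law of $Y^{t,y}(s)$ is Gaussian with strictly positive variance $\int_t^s\psi(v,s)^2\gamma(v)^2\sigma^{-2}\,\mathrm{d}v$, hence charges every nonempty open set.

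Negativity of $f$ is then immediate: writing $f(t,y)=-\int_t^T\mathbb{E}^{\mathbb{P}}[\tilde g(s,Y^{t,y}(s))]\,\mathrm{d}s$ (the integrand being integrable by Theorem~\ref{theorem1} and the Gaussian moments of $Y^{t,y}$), for each $s\in(t,T)$ the third feature together with the fact that $\tilde g(s,\cdot)\ge0$ is strictly positive off a bounded band gives $\mathbb{E}^{\mathbb{P}}[\tilde g(s,Y^{t,y}(s))]>0$, whence $f(t,y)<0$ for all $t<T$. For the concavity I would differentiate the representation twice in $y$, using the affine flow, to get
\[
f_y(t,y)=-\mathbb{E}^{\mathbb{P}}\!\Big[\int_t^T\tilde g_y(s,Y^{t,y}(s))\,\psi(t,s)\,\mathrm{d}s\Big],\qquad
f_{yy}(t,y)=-\mathbb{E}^{\mathbb{P}}\!\Big[\int_t^T\tilde g_{yy}(s,Y^{t,y}(s))\,\psi(t,s)^2\,\mathrm{d}s\Big].
\]
Substituting $\tilde g_{yy}$ yields the explicit identity $f_{yy}(t,y)=-\sigma^{-2}\int_t^T\psi(t,s)^2\,\mathbb{P}(|Y^{t,y}(s)-r|>a\sqrt{\gamma(s)})\,\mathrm{d}s$, which is strictly negative for every $t<T$ because the integrand is strictly positive on $(t,T)$. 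Hence $f(t,\cdot)$ is strictly concave and $f_y(t,\cdot)$ strictly decreasing for each $t<T$.

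It remains to show that the zero of $f_y(t,\cdot)$ sits at $y=r$, which I would get from symmetry. Since the drift $\gamma(s)(r-\cdot)\sigma^{-2}$ vanishes at $r$, the solution started at $r$ is $Y^{t,r}(s)=r+M_s$ with $M_s=\int_t^s\psi(v,s)\gamma(v)\sigma^{-1}\,\mathrm{d}W(v)$ a centered Gaussian process, so $Y^{t,r\pm z}(s)=r\pm z\,\psi(t,s)+M_s$. From $(M_s)_{t\le s\le T}\stackrel{d}{=}(-M_s)_{t\le s\le T}$ and the symmetry of $\tilde g(s,\cdot)$ about $r$ one obtains $\mathbb{E}^{\mathbb{P}}[\tilde g(s,Y^{t,r+z}(s))]=\mathbb{E}^{\mathbb{P}}[\tilde g(s,Y^{t,r-z}(s))]$ for every $s$, hence $f(t,r+z)=f(t,r-z)$; thus $y\mapsto f(t,y)$ is symmetric about $r$ and $f_y(t,r)=0$ for all $t\in[0,T]$ (trivially at $t=T$, where $f\equiv0$). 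Combining this with the strict monotonicity of $f_y(t,\cdot)$ for $t<T$ gives $f_y(t,y)>0$ for $y<r$ and $f_y(t,y)<0$ for $y>r$; at $t=T$, $f(T,\cdot)\equiv0$ forces $f_y(T,\cdot)\equiv0$. This is precisely \eqref{equ: fy}.

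The step I expect to require the most care is the second differentiation under the expectation, since $\tilde g$ is only $C^1$ in $y$ and $\tilde g_{yy}$ is bounded but discontinuous. This is where the affine, noise-independent structure of the flow pays off: it makes $Y^{t,y}(s)$ genuinely Gaussian (not merely diffusive), so its law puts zero mass on the two kink points $r\pm a\sqrt{\gamma(s)}$ of $\tilde g_y(s,\cdot)$, while the global Lipschitz constant $\sigma^{-2}$ of $\tilde g_y(s,\cdot)$ dominates the difference quotients and legitimizes dominated convergence. A final routine check — that the continuous function produced by the representation for $f_{yy}$ agrees with the classical second derivative supplied by Theorem~\ref{theorem1} — follows from uniqueness of pointwise derivatives.
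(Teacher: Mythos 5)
Your argument is correct, and its first half (strict negativity of $f$ via the Feynman--Kac representation \eqref{storep}, the nonnegativity of $\tilde g$, and the fact that the nondegenerate Gaussian law of $Y^{t,y}(s)$ charges the region where $\tilde g(s,\cdot)>0$) coincides with the paper's. Where you genuinely diverge is in the treatment of $f_y$. The paper differentiates the representation once, writes $f_y(t,y)=-\int_t^T\mathbb{E}^{\mathbb{P}}\bigl[\tilde g_y(s,Y^{t,y}(s))\tfrac{\gamma(s)}{\gamma(t)}\bigr]\mathrm{d}s$, and then determines the sign of each expectation by a three-case analysis on the mean $p(s;t,y)=\tfrac{\gamma(s)}{\gamma(t)}y+\tfrac{\gamma(s)r}{\sigma^2}(s-t)$, which satisfies $p(s;t,y)-r=\tfrac{\gamma(s)}{\gamma(t)}(y-r)$, exploiting that $\tilde g_y(s,\cdot)$ is odd about $r$ and nondecreasing; monotonicity of $f_y$ in $y$ is then read off from the monotonicity of $\tilde g_y$ under the affine shift. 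You instead differentiate a second time to obtain the explicit identity $f_{yy}(t,y)=-\sigma^{-2}\int_t^T\psi(t,s)^2\,\mathbb{P}\bigl(|Y^{t,y}(s)-r|>a\sqrt{\gamma(s)}\bigr)\mathrm{d}s<0$, and locate the unique zero of $f_y(t,\cdot)$ at $y=r$ by the distributional symmetry $M_s\overset{d}{=}-M_s$ combined with the symmetry of $\tilde g(s,\cdot)$ about $r$. Your route buys a quantitatively explicit and strictly negative formula for $f_{yy}$ (hence strict concavity of $f(t,\cdot)$, which is slightly more than the theorem asserts and is argued only implicitly in the paper), at the cost of having to justify a second differentiation of a merely $C^1$ integrand --- a step you correctly dispose of by noting that $\tilde g_y(s,\cdot)$ is globally Lipschitz with constant $\sigma^{-2}$ and that the Gaussian law of $Y^{t,y}(s)$ for $s>t$ puts no mass on the two kink points. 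The paper's case analysis avoids any second derivative of $\tilde g$ but gives less structural information. Both proofs rest on the same two pillars (the affine, deterministic-in-the-noise flow $Y^{t,y}(s)=\psi(t,s)y+\zeta(t,s)$ with $\psi(t,s)=\gamma(s)/\gamma(t)$, and the symmetry of $\tilde g$ about $r$), and I find no gap in yours.
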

\begin{proof}
Using the stochastic representation given in (\ref{storep}), we have 
 \begin{equation*}
		f(t,y)=-\mathbb{E}^{\mathbb{P}}\left[\int_t^T\tilde g(s,Y^{t,y}(s))\mathrm{d}s\right]\leq 0,\quad \forall\  (t,y)\in[0,T]\times\mathbb{R},
	\end{equation*}
	where  $$Y^{t,y}(s)=\frac{\gamma(s)y}{\gamma(t)}+\frac{\gamma(s)r}{\sigma^2}(s-t)+\frac{\gamma(s)}{\sigma}\left(W(s)-W(t)\right), \ s \in [t,T]. $$
	Therefore $$Y^{t,y}(s)\sim N\left(\frac{\gamma(s)y}{\gamma(t)}+\frac{\gamma(s)r}{\sigma^2}(s-t),\frac{\gamma^2(s)}{\sigma^2}(s-t)\right),\ s \in [t,T].$$
As \( \text{Var}(Y^{t,y}(s)) = \frac{\gamma^2(s)}{\sigma^2}(s-t) > 0 \) for \( s \in (t, T] \), we have \( \mathbb{P}\left(\{\tilde{g}(s, Y^{t,y}(s)) > 0\}\right) > 0 \) for \( \forall\  t\in[0,T)\) and \( s \in (t, T] \). Then
	$$		\mathbb{E}^{\mathbb{P}}\left[\tilde g(s,Y^{t,y}(s))\right]>0,\ s \in (t,T].
	$$ 
Thus
	\begin{equation*}
	\begin{aligned}
		f(t,y)&=-\mathbb{E}^{\mathbb{P}}\left[\int_t^T\tilde g(s,Y^{t,y}(s))\mathrm{d}s\right]\\
		&=-\int_t^T\mathbb{E}^{\mathbb{P}}\left[\tilde g(s,Y^{t,y}(s))\right]\mathrm{d}s<0,\quad t\in[0,T).
	\end{aligned}
	\end{equation*}
 Furthermore, when $a>0$, we can show that $$f_y(t,y)\equiv 0,\quad  r\in\Lambda_{t,y}\quad t\in[0,T)$$ is not true. Otherwise, we know $$f_{yy}(t,y)\equiv 0,\quad  r\in\Lambda_{t,y},\quad  t\in[0,T).$$ As such, from the PDE \eqref{pde1} that $f(t,y)$ satisfies, we know $$f_{t}(t,y)\equiv 0,\quad r\in\Lambda_{t,y},\quad  t\in[0,T).$$ Thus, combining with the terminal condition, we have $$f(t,y)\equiv 0,\quad  r\in\Lambda_{t,y},\quad t\in[0,T),$$  which leads to a contradiction.
 
 Moreover, we can provide a more precise estimate for $f_y(t,y)$. Using the stochastic representation given in (\ref{storep}) and the dominated convergence theorem for derivatives, we have 
 \begin{equation*}
 \begin{aligned}
		f_y(t,y)&=-\mathbb{E}^{\mathbb{P}}\left[\int_t^T\tilde g_y(s,Y^{t,y}(s))\frac{\mathrm{d}Y^{t,y}(s)}{\mathrm{d}y}\mathrm{d}s\right]\\
  &=-\int_t^T\mathbb{E}^{\mathbb{P}}\left[\tilde g_y(s,Y^{t,y}(s))\frac{\gamma(s)}{\gamma(t)}\right]\mathrm{d}s,\quad \forall (t,y)\in[0,T]\times\mathbb{R},
\end{aligned}
\end{equation*}
 where $\tilde g_y(s,y)=-\frac{r-y+a\sqrt{\gamma(s)}}{\sigma^2}\mathrm{I}_{\{r-y+a\sqrt{\gamma(s)}\leq 0\}}-\frac{r-y-a\sqrt{\gamma(s)}}{\sigma^2}\mathrm{I}_{\{r-y-a\sqrt{\gamma(s)}\geq 0\}}=- \tilde g_2(s,y)$,  $\tilde g_2$ is defined in \eqref{equ:g2t}. 

 Define $$p(s;t,y)=\mathbb{E}^{\mathbb{P}}\left[Y^{t,y}(s)\right]=\frac{\gamma(s)y}{\gamma(t)}+\frac{\gamma(s)r}{\sigma^2}(s-t), \ s \in [t,T].$$
 We need to distinguish the following three
cases based on the relation between $y$ and $r$.
\begin{itemize}
	\item When $y=r$, we have   $p(s;t,y)=\frac{\gamma(s)r}{\gamma(t)}+\frac{\gamma(s)r}{\sigma^2}(s-t)\equiv r, \ s \in [t,T]$. Thus, $\mathbb{E}^{\mathbb{P}}\left[\tilde g_y(s,Y^{t,y}(s))\frac{\gamma(s)}{\gamma(t)}\right]=0, \ s \in [t,T].$ Then $f_y(t,y)=0,\ t\in[0,T]$.
  \item When $y<r$, we have   $p(s;t,y)-r=\frac{\gamma(s)}{\gamma(t)}(y-r)$, as such $p(s;t,y)=\frac{\gamma(s)y}{\gamma(t)}+\frac{\gamma(s)r}{\sigma^2}(s-t)< r, \ s \in [t,T]$. Thus, $\mathbb{E}^{\mathbb{P}}\left[\tilde g_y(s,Y^{t,y}(s))\frac{\gamma(s)}{\gamma(t)}\right]<0, \ s \in (t,T].$ Then $f_y(t,y)>0,\ t\in[0,T)$. And $f_y(t,y)$ is monotonically decreasing with respect to $y$ when $y<r,\ t\in[0,T)$.
  \item When $y>r$, we have   $p(s;t,y)-r=\frac{\gamma(s)}{\gamma(t)}(y-r)$, as such $p(s;t,y)=\frac{\gamma(s)y}{\gamma(t)}+\frac{\gamma(s)r}{\sigma^2}(s-t)> r, \ s \in [t,T]$. Thus, $\mathbb{E}^{\mathbb{P}}\left[\tilde g_y(s,Y^{t,y}(s))\frac{\gamma(s)}{\gamma(t)}\right]>0, \ s \in (t,T].$ Then, $f_y(t,y)<0,\ t\in[0,T)$. And $f_y(t,y)$ is monotonically decreasing with respect to $y$ when $y>r,\ t\in[0,T)$.
\end{itemize}

Therefore, $f_y(t,y)$ is monotonically decreasing with respect to $y$ when $y\in \mathbb{R},\ t\in[0,T)$.
Thus the theorem holds.
\end{proof}

Theorem \ref{thm:ffy} demonstrates that the hedging demand in \eqref{equ:pihat} is positive when \(y < r\) and negative when \(y > r\), aligning with results in portfolio selection under partial information. According to \eqref{equ:pihat}, the myopic demand is positive when \(y > a\sqrt{\gamma(t)} + r\) and negative when \(y < -a\sqrt{\gamma(t)} + r\). In the interval \(y \in [-a\sqrt{\gamma(t)} + r, a\sqrt{\gamma(t)} + r]\), the myopic demand is zero, and the sign of \(\hat{\pi}(t,y)\) is determined by the hedging demand.  The sign of \(\hat{\pi}(t,y)\) for $y$ outside $[-a\sqrt{\gamma(t)} + r, a\sqrt{\gamma(t)} + r]$ is influenced by both myopic and hedging demands. Combining \eqref{equ:pihat} and \eqref{equ: fy}, we can more clearly ascertain the sign of \(\hat{\pi}(t,y)\).
 \begin{figure}[htbp]
 \centering
 {\begin{tikzpicture}
  \draw[very thick,->] (-2,0) -- (8,0) node[anchor=north west]{{\large$y$}}{};
  \draw[very thick,->] (0,-2.5) -- (0,3) node[anchor=south east]{{\large$\hat\pi(t,y)$}};
  \draw[ultra thick,color=blue] (-2,-2.5) -- (1,0.5) ;
  \draw[ultra thick,color=blue] (5,-0.5) -- (8,2.5) ;
  \draw[ultra thick,color=blue] (1,0.5) -- (5,-0.5) ;
  \fill (1,0)[color=blue]  circle (0.1);
  \node at (1,0) [below] {{\large$r-a\sqrt{\gamma(t)}$}};
  \fill (5,0)[color=blue]  circle (0.1);
  \node at (5,0) [below] {{\large$r+a\sqrt{\gamma(t)}$}};
  \fill[color=blue] (3,0) circle (0.1);
  \node at (3,-0.25) [below] {{\large$r$}};
  \node at (-0.25,0.25) {{\large$0$}};
  \put(-22,-26){\color{red}{$-$}}
  \put(22,22){\color{red}{$+$}}
  \put(135,-26){\color{red}{$-$}}
   \put(187,22){\color{red}{$+$}}
 \end{tikzpicture}}
\caption{The sign of the robust optimal
feedback function $\hat\pi(t,y)$.} \label{f7}
\end{figure}
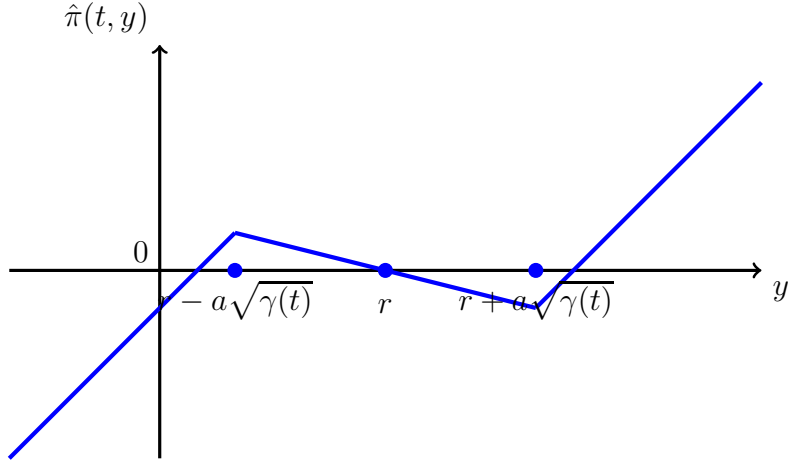
\begin{corollary}\label{coro:hatpi}
   For fixed $a>0, \ 0\leq t <T$, the sign of the robust optimal
feedback function $\hat\pi(t,y)$ is illustrated in Fig.~\ref{f7}.
\end{corollary}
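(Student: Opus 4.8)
The plan is to combine the explicit formula \eqref{equ:pihat} for $\hat\pi$ with the sign and monotonicity properties of $f_y$ from Theorem~\ref{thm:ffy}. Fix $a>0$ and $0\le t<T$, write $\gamma:=\gamma(t)$, $\delta:=a\sqrt{\gamma(t)}>0$, $c:=\mathrm{e}^{-r(T-t)}/(k\sigma^2)>0$, and set $\psi(y):=\gamma f_y(t,y)+y-r$. Since $r\in\Lambda_{t,y}$ precisely when $y\in[r-\delta,r+\delta]$, formula \eqref{equ:pihat} reads $\hat\pi(t,y)=c\,\big(\psi(y)+\delta\big)$ for $y<r-\delta$; $\hat\pi(t,y)=c\,\gamma f_y(t,y)$ for $r-\delta\le y\le r+\delta$; and $\hat\pi(t,y)=c\,\big(\psi(y)-\delta\big)$ for $y>r+\delta$. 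The three expressions agree at the interfaces $y=r\mp\delta$ (at $y=r-\delta$ one has $\psi(r-\delta)+\delta=\gamma f_y(t,r-\delta)$, and similarly at $y=r+\delta$), so $\hat\pi(t,\cdot)$ is continuous on $\mathbb{R}$, with corners only at $y=r\pm\delta$. The argument proceeds in four steps: (i) show that $\psi$ is strictly increasing on $\mathbb{R}$ with $\psi(y)\to\pm\infty$ as $y\to\pm\infty$; (ii) use \eqref{equ: fy} to read off the sign of $\hat\pi$ on the middle slab $[r-\delta,r+\delta]$; (iii) on each outer half-line, combine the limit at infinity, the value at the inner endpoint, and the monotonicity of $\psi$ to locate exactly one sign change; (iv) splice the three pieces and compare with Fig.~\ref{f7}.

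Step (i) is the heart of the matter. From the stochastic representation of $f_y$ derived in the proof of Theorem~\ref{thm:ffy}, $f_y(t,y)=-\int_t^T\frac{\gamma(s)}{\gamma(t)}\,\mathbb{E}^{\mathbb{P}}\!\big[\tilde g_y(s,Y^{t,y}(s))\big]\,\mathrm{d}s$, I use two elementary properties of the map $z\mapsto\tilde g_y(s,z)$, both immediate from its explicit piecewise-linear form: it is non-decreasing, and it is $1/\sigma^2$-Lipschitz (its slope equals $0$ on $(r-a\sqrt{\gamma(s)},r+a\sqrt{\gamma(s)})$ and $1/\sigma^2$ elsewhere). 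Since $Y^{t,y}(s)-Y^{t,y'}(s)=\frac{\gamma(s)}{\gamma(t)}(y-y')$ pathwise, for $y>y'$ we obtain $0\le\tilde g_y(s,Y^{t,y}(s))-\tilde g_y(s,Y^{t,y'}(s))\le\frac{\gamma(s)}{\sigma^2\gamma(t)}(y-y')$ almost surely; taking expectations, multiplying by $\gamma(s)/\gamma(t)>0$, and integrating over $s\in[t,T]$ gives $0\le f_y(t,y')-f_y(t,y)\le\frac{y-y'}{\gamma^2(t)}\int_t^T\frac{\gamma^2(s)}{\sigma^2}\,\mathrm{d}s$. Because $\gamma'(s)=-\gamma^2(s)/\sigma^2$, the integral equals $\gamma(t)-\gamma(T)$, so $0\le f_y(t,y')-f_y(t,y)\le\frac{\gamma(t)-\gamma(T)}{\gamma^2(t)}\,(y-y')$ for all $y>y'$. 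Multiplying by $\gamma(t)$ and adding $(y-y')$ yields $\psi(y)-\psi(y')=\gamma(t)\big(f_y(t,y)-f_y(t,y')\big)+(y-y')\ge\frac{\gamma(T)}{\gamma(t)}(y-y')>0$, where we used $0<\gamma(T)<\gamma(t)$ for $t<T$. Hence $\psi$ is strictly increasing; and since $\psi(y)\ge\psi(0)+\frac{\gamma(T)}{\gamma(t)}y$ for $y\ge0$ and $\psi(y)\le\psi(0)+\frac{\gamma(T)}{\gamma(t)}y$ for $y\le0$, $\psi(y)\to\pm\infty$ as $y\to\pm\infty$.

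Granting (i), the rest is bookkeeping. On $[r-\delta,r+\delta]$ one has $\hat\pi(t,y)=c\,\gamma f_y(t,y)$, which has the sign of $f_y(t,y)$, so \eqref{equ: fy} (using $a>0$ and $t<T$) gives $\hat\pi>0$ on $[r-\delta,r)$, $\hat\pi(t,r)=0$, and $\hat\pi<0$ on $(r,r+\delta]$. On $(-\infty,r-\delta)$, $\hat\pi(t,y)=c\,(\psi(y)+\delta)$ is strictly increasing with $\hat\pi(t,y)\to-\infty$ as $y\to-\infty$, while $\hat\pi(t,r-\delta)=c\,\gamma f_y(t,r-\delta)>0$ by \eqref{equ: fy}; hence there is a unique $y_0\in(-\infty,r-\delta)$ with $\hat\pi(t,y_0)=0$, and $\hat\pi<0$ on $(-\infty,y_0)$, $\hat\pi>0$ on $(y_0,r-\delta]$. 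Symmetrically, on $(r+\delta,\infty)$, $\hat\pi(t,y)=c\,(\psi(y)-\delta)$ is strictly increasing with $\hat\pi(t,y)\to+\infty$ as $y\to+\infty$ and $\hat\pi(t,r+\delta)=c\,\gamma f_y(t,r+\delta)<0$; hence there is a unique $y_1\in(r+\delta,\infty)$ with $\hat\pi(t,y_1)=0$, and $\hat\pi<0$ on $[r+\delta,y_1)$, $\hat\pi>0$ on $(y_1,\infty)$. Splicing the three regions: $\hat\pi<0$ on $(-\infty,y_0)$, $\hat\pi>0$ on $(y_0,r)$, $\hat\pi(t,r)=0$, $\hat\pi<0$ on $(r,y_1)$, $\hat\pi>0$ on $(y_1,\infty)$, with $y_0<r-\delta<r<r+\delta<y_1$ — which is exactly the sign pattern of Fig.~\ref{f7}.

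The only genuine obstacle is the monotonicity estimate for $\psi$ in step (i); everything downstream follows from Theorem~\ref{thm:ffy} together with the continuity of $\hat\pi(t,\cdot)$. An alternative to (i) is to differentiate the stochastic representation twice, which gives $\gamma(t)f_{yy}(t,y)=-\frac{1}{\gamma(t)}\int_t^T\gamma^2(s)\,\mathbb{E}^{\mathbb{P}}[\tilde g_{yy}(s,Y^{t,y}(s))]\,\mathrm{d}s\in[-(1-\gamma(T)/\gamma(t)),0]$ and hence $\psi'(y)=\gamma(t)f_{yy}(t,y)+1\ge\gamma(T)/\gamma(t)>0$; but since $\tilde g$ is only of class $C^{0,1}$, I prefer the difference-quotient argument above, which avoids differentiating $\tilde g_y$ and the attendant almost-everywhere technicalities.
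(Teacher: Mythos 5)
Your proof is correct, but it takes a genuinely different route from the paper's. The paper handles the outer regions by comparing $f_y$ with the explicit ambiguity-free solution $f_y|_{a=0}(t,y)=\frac{\gamma(T)-\gamma(t)}{\gamma^2(t)}(y-r)$: using $\tilde g_y\le \tilde g_y|_{a=0}$ for $y>r$ (and the reverse for $y<r$) in the stochastic representation, it obtains the one-sided linear bounds \eqref{equ:fy1}--\eqref{equ:fy2} and concludes only that $\hat\pi(t,y)>0$ for $y$ \emph{relatively large} and $\hat\pi(t,y)<0$ for $y$ \emph{relatively small}, leaving the single-crossing structure of Fig.~\ref{f7} implicit. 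You instead extract a two-sided estimate $0\le f_y(t,y')-f_y(t,y)\le\frac{\gamma(t)-\gamma(T)}{\gamma^2(t)}(y-y')$ from the monotonicity and $1/\sigma^2$-Lipschitz continuity of $z\mapsto\tilde g_y(s,z)$ together with the pathwise identity $Y^{t,y}(s)-Y^{t,y'}(s)=\frac{\gamma(s)}{\gamma(t)}(y-y')$, which gives the strict monotonicity $\psi(y)-\psi(y')\ge\frac{\gamma(T)}{\gamma(t)}(y-y')$ of $\psi(y)=\gamma(t)f_y(t,y)+y-r$. This buys you something the paper's argument does not deliver: continuity of $\hat\pi(t,\cdot)$ across the interfaces and \emph{uniqueness} of the zero crossing on each outer half-line, so the sign pattern $-,+,-,+$ of Fig.~\ref{f7} is established in full rather than only asymptotically. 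Your lower bound $\psi(y)\ge\frac{\gamma(T)}{\gamma(t)}(y-r)$ for $y>r$ recovers exactly the paper's \eqref{equ:fy1}, so the two arguments are consistent; yours is simply sharper, at the cost of the extra Lipschitz/difference-quotient step, which you justify correctly (and sensibly avoid differentiating $\tilde g_y$, which is only piecewise linear).
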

\begin{proof}
The sign of \(\hat{\pi}(t,y)\) when \(y \in \left[r - a\sqrt{\gamma(t)}, r + a\sqrt{\gamma(t)}\right]\) follows directly by combining \eqref{equ:pihat} and \eqref{equ: fy}. Next, we will show the signs of \(\hat{\pi}(t,y)\) when \(y\) is relatively small or large.

Let \( f|_{a=0}(t,y) \) denote the solution to \eqref{pde1} when \( a=0 \), which is given by \eqref{equ:fa=0}. Fixing $ t\in[0,T), a>0$, we have
\begin{equation*}
	\begin{aligned}
		\mathbb{E}^{\mathbb{P}}\left[\tilde g(s,Y^{t,y}(s))\right]<\mathbb{E}^{\mathbb{P}}\left[\tilde g|_{a=0}(s,Y^{t,y}(s))\right],\ s \in (t,T].
	\end{aligned}
	\end{equation*}
Then
\begin{equation*}
	\begin{aligned}
		0>f(t,y)&=-\int_t^T\mathbb{E}^{\mathbb{P}}\left[\tilde g(s,Y^{t,y}(s))\right]\mathrm{d}s\\
  &>-\int_t^T\mathbb{E}^{\mathbb{P}}\left[\tilde g|_{a=0}(s,Y^{t,y}(s))\right]\mathrm{d}s\\
  &=f|_{a=0}(t,y)=f_1(t)y^2-2rf_1(t)y+f_3(t).
	\end{aligned}
	\end{equation*}
Moreover, when $y>r$, 
\begin{equation*}
	\begin{aligned}
		\mathbb{E}^{\mathbb{P}}\left[\tilde g_y(s,Y^{t,y}(s))\right]<\mathbb{E}^{\mathbb{P}}\left[\tilde g_y|_{a=0}(s,Y^{t,y}(s))\right],\ s \in [t,T].
	\end{aligned}
	\end{equation*}
Therefore,
\begin{equation*}
	\begin{aligned}
		0>f_y(t,y)&=-\int_t^T\mathbb{E}^{\mathbb{P}}\left[\tilde g_y(s,Y^{t,y}(s))\frac{\gamma(s)}{\gamma(t)}\right]\mathrm{d}s,\\
  &>-\int_t^T\mathbb{E}^{\mathbb{P}}\left[\tilde g_y|_{a=0}(s,Y^{t,y}(s))\frac{\gamma(s)}{\gamma(t)}\right]\mathrm{d}s,\\
  &=f_y|_{a=0}(t,y)=2f_1(t)y-2rf_1(t)=\frac{\gamma(T)-\gamma(t)}{\gamma^2(t)}(y-r).
	\end{aligned}
	\end{equation*}
\begin{equation}\label{equ:fy1}
f_y(t,y)\gamma(t)-r+y-a\sqrt{\gamma(t)}> \frac{\gamma(T)}{\gamma(t)}(y-r)-a\sqrt{\gamma(t)}.
\end{equation}
Combining with  \eqref{equ:pihat}, we find that, when \(y\) is relatively large, \(\hat{\pi}(t,y) > 0\).

Similarly, when $y<r$,
\begin{equation*}
	\begin{aligned}
		0<f_y(t,y)&=-\int_t^T\mathbb{E}^{\mathbb{P}}\left[\tilde g_y(s,Y^{t,y}(s))\frac{\gamma(s)}{\gamma(t)}\right]\mathrm{d}s,\\
  &<-\int_t^T\mathbb{E}^{\mathbb{P}}\left[\tilde g_y|_{a=0}(s,Y^{t,y}(s))\frac{\gamma(s)}{\gamma(t)}\right]\mathrm{d}s,\\
  &=f_y|_{a=0}(t,y)=2f_1(t)y-2rf_1(t)=\frac{\gamma(T)-\gamma(t)}{\gamma^2(t)}(y-r).
	\end{aligned}
	\end{equation*}
\begin{equation}\label{equ:fy2}
f_y(t,y)\gamma(t)-r+y+a\sqrt{\gamma(t)}< \frac{\gamma(T)}{\gamma(t)}(y-r)+a\sqrt{\gamma(t)}.
\end{equation}
Therefore,  combining with  \eqref{equ:pihat}, we conclude that, when \(y\) is relatively small, \(\hat{\pi}(t,y) < 0\).
\end{proof}

Corollary \ref{coro:hatpi} indicates that when $y \in (r - a\sqrt{\gamma(t)}, r+a\sqrt{\gamma(t)})$, the myopic demand disappears, and the sign of the $\hat{\pi}(t,y)$ depends on the sign of the hedging demand. Consequently, the investor adopts a long position in the risky asset when $y \in (r - a\sqrt{\gamma(t)}, r)$, whereas when $y \in (r, r + a\sqrt{\gamma(t)})$, the investor takes a short position.

However, when \(y < r - a\sqrt{\gamma(t)}\), the investment strategy comprises a negative myopic demand and a positive hedging demand. Near \(y = r - a\sqrt{\gamma(t)}\), the myopic demand approaches zero, allowing the hedging demand to dominate and resulting in a positive optimal feedback function. Conversely, when \(y\) is relatively small, the myopic demand prevails, leading to a negative robust optimal feedback function. For \(y > r + a\sqrt{\gamma(t)}\), if \(y\) is close to \(r + a\sqrt{\gamma(t)}\), the hedging demand dominates, yielding a negative robust optimal feedback function. However, as \(y\) increases further, the myopic demand takes precedence, resulting in a positive feedback function.

\section{Optimal solution}
\label{Optimal solution}
In this section, we solve the robust optimal investment problem (\ref{obj1}) in Subsection \ref{Robust optimal investment problem} based on the solution of the HJBI equation (\ref{hjbi0}).  We verify that the candidate value function $\varphi$  and related suprema $\hat{\pi}$  given in Proposition \ref{prop} solve Problem (\ref{obj1}).  First, we present the definition of admissible
investment strategy. Before proving the optimality of the candidate robust optimal solution, we show the admissibility of $\hat{\pi}$. Finally, we conclude this section with the verification theorem.

\subsection{Definition of admissible investment strategy}
Recall in Subsection \ref{Robust optimal investment problem}, we do not give a detailed definition of admissible investment strategy. Now, we define the admissible investment strategy based on the solution $\varphi(t,x,y)$ of the HJBI equation (\ref{hjbi0}).
\begin{definition}[admissible investment strategy] \label{D4.1}An investment strategy $\pi$ is said to be admissible if the following conditions are satisfied:
	
	(i) $\pi \in \Pi_0$.
	
	(ii) \begin{equation*}
		\left\{\int_0^t\left( \varphi_x\left(s, X^{\pi}(s), Y(s)\right)\sigma \pi(s)+\varphi_y\left(s, X^{\pi}(s), Y(s)\right) \frac{\gamma(s)}{\sigma}\right)\mathrm{d} W^{\tilde  \mu}(s)\right\}_{0\le t\le T} 
	\end{equation*}
	is a supermartingale w.r.t. filtration $\{\mathcal{F}_{t}^{S}\}_{0\leq t \leq T}$ under probability measure $\mathbb{Q}^{\tilde  \mu}$ for any $\mathbb{Q}^{\tilde  \mu}\in \mathcal{Q}$.
	\label{def}
\end{definition}

We denote the set of admissible strategies by $\Pi$. 

\subsection{Admissibility of the optimal investment strategy}
In this subsection, we first obtain the robust optimal investment strategy based on the solution $\varphi(t,x,y)$ and the suprema $\hat\pi(t , y)$ of the HJBI  equation (\ref{hjbi0}). Then, we prove the admissibility of the robust optimal solution. The proof of optimality is left in the last subsection.

According to the solution $\varphi(t,x,y)$ and the suprema $\hat\pi(t,y)$ of the HJBI  equation (\ref{hjbi0}), $$\pi^{*}\triangleq\{\pi^{*}(t)= \hat\pi(t , Y(t)):\ 0\leq t \leq T\}$$ is a candidate robust optimal investment strategy. In particular, it is noteworthy that $\pi^{*}$ is independent of the wealth process $X$.

Using Theorem \ref{theorem1}, we have  $$|\hat\pi(t , y)|\leq C_3(1+|y|),\quad t\in [0,T],\quad y\in \mathbb{R},$$  
where $C_3>0$ is a constant. Thus  $\pi^{*}$ is feasible because
\begin{equation*}
	\begin{aligned}
		&\mathbb{E}^{\mathbb{P}}\left[\int_0^T(\pi^{*}(t))^2\mathrm{d}t\right]\leq C_4 \mathbb{E}^{\mathbb{P}}\left[\int_0^T(1+Y^2(t))\mathrm{d}t\right]=C_4 \int_0^T\mathbb{E}^{\mathbb{P}}\left[1+Y^2(t)\right]\mathrm{d}t\\
		&=C_4 \int_0^T \left(1+y^2_0+\int_0^t\frac{\gamma^2(s)}{\sigma^2}\mathrm{d}s\right)\mathrm{d}t\leq C_5<+\infty,
	\end{aligned}
\end{equation*}
where  $C_4 >0 $ and  $ C_5>0$ are constants.  To establish the admissibility of $\pi^{*}$, we first present two lemmas.
\begin{lemma}\label{lemma:w}
If $2Ct^2\frac{\sigma_0^4}{\sigma^2}<1$, then
$$\mathbb{E}^{\mathbb{P}}\left[\mathrm{exp}\left\{C\int_0^t\gamma^2(s) \frac{ W^2(s)}{\sigma^2}\mathrm{d} s\right\}\right]<+\infty.$$
\end{lemma}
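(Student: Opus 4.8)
The plan is to control the exponential moment by a direct Gaussian computation, exploiting that $W(s)$ is a Brownian motion under $\mathbb{P}$ and that $\gamma$ is deterministic and bounded. First I would bound $\gamma^2(s)\le\sigma_0^4$ on $[0,t]$ (recall $0<\gamma(T)\le\gamma(s)\le\sigma_0^2$), so that
$$
\mathbb{E}^{\mathbb{P}}\!\left[\exp\!\left\{C\int_0^t\gamma^2(s)\frac{W^2(s)}{\sigma^2}\,\mathrm{d}s\right\}\right]
\le \mathbb{E}^{\mathbb{P}}\!\left[\exp\!\left\{\frac{C\sigma_0^4}{\sigma^2}\int_0^t W^2(s)\,\mathrm{d}s\right\}\right].
$$
This reduces the claim to a statement about $\int_0^t W^2(s)\,\mathrm{d}s$, whose exponential moments are classical. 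The threshold condition $2Ct^2\sigma_0^4/\sigma^2<1$ is exactly what one expects from the known moment generating function of the time-integral of a squared Brownian motion.

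Next I would invoke the explicit formula for the Laplace transform of $\int_0^t W^2(s)\,\mathrm{d}s$: for $\lambda\ge0$,
$$
\mathbb{E}^{\mathbb{P}}\!\left[\exp\!\left\{-\lambda\int_0^t W^2(s)\,\mathrm{d}s\right\}\right]=\big(\cosh(\sqrt{2\lambda}\,t)\big)^{-1/2},
$$
which, by analytic continuation to negative $\lambda$ (i.e. $\lambda\to-\beta$ with $\beta>0$), gives
$$
\mathbb{E}^{\mathbb{P}}\!\left[\exp\!\left\{\beta\int_0^t W^2(s)\,\mathrm{d}s\right\}\right]=\big(\cos(\sqrt{2\beta}\,t)\big)^{-1/2},
$$
finite precisely when $\sqrt{2\beta}\,t<\pi/2$. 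Taking $\beta=C\sigma_0^4/\sigma^2$, finiteness holds as soon as $2\beta t^2<\pi^2/4$, which is implied by the hypothesis $2Ct^2\sigma_0^4/\sigma^2<1$ since $1<\pi^2/4$. Hence the bound above is finite and the lemma follows. Alternatively, to avoid citing the Cameron--Martin/Lévy formula, I would use a crude but self-contained estimate: partition $[0,t]$ into $n$ subintervals and use $\int_0^t W^2(s)\,\mathrm{d}s\le t\max_{0\le s\le t}W^2(s)$ together with the Gaussian tail of $\max_{0\le s\le t}|W(s)|$, which has all exponential moments of order less than $1/(2t)$ for $W^2$; tuning constants recovers a (slightly worse) sufficient condition, but since the paper only needs \emph{some} finite bound for small $t$, either route works.

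The only mild obstacle is making the constant bookkeeping match the stated hypothesis cleanly; the cleanest route is the exact Laplace-transform identity, so I would present that and note that the hypothesis $2Ct^2\sigma_0^4/\sigma^2<1$ is a (non-sharp) sufficient condition for $\sqrt{2C\sigma_0^4/\sigma^2}\,t<\pi/2$. If one prefers to stay strictly elementary, Jensen's inequality in the form $\exp\{\beta\int_0^t W^2(s)\,\mathrm{d}s\}\le \frac1t\int_0^t\exp\{\beta t\,W^2(s)\}\,\mathrm{d}s$ reduces everything to $\mathbb{E}^{\mathbb{P}}[\exp\{\beta t\,W^2(s)\}]=(1-2\beta t s)^{-1/2}$ for $2\beta t s<1$, which is finite for all $s\le t$ exactly when $2\beta t^2<1$, i.e. $2C\sigma_0^4 t^2/\sigma^2<1$ — matching the hypothesis verbatim. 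I would likely use this last argument, since it is short, self-contained, and yields precisely the stated condition.
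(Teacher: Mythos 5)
Your proposal is correct. The paper's own proof is the ``elementary'' route: it Taylor-expands the exponential, bounds $\bigl(\int_0^t \gamma^2(s)W^2(s)/\sigma^2\,\mathrm{d}s\bigr)^n \le t^{n-1}(\sigma_0^4/\sigma^2)^n\int_0^t W^{2n}(s)\,\mathrm{d}s$, inserts the Gaussian moments $\mathbb{E}[W^{2n}(s)]=\frac{(2n)!}{2^n n!}s^n$, and sums the resulting geometric series, which converges precisely when $2Ct^2\sigma_0^4/\sigma^2<1$. Your final Jensen argument is essentially this same computation repackaged: applying Jensen to the uniform measure on $[0,t]$ and then using the closed-form $\mathbb{E}[\mathrm{e}^{\lambda W^2(s)}]=(1-2\lambda s)^{-1/2}$ is what you get if you interchange the paper's sum and integral and resum in closed form. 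It is cleaner, reproduces the hypothesis verbatim, and is fully rigorous (Tonelli justifies the interchange since everything is nonnegative), so it is a perfectly good substitute. Your primary route via the Cameron--Martin/L\'evy formula $\mathbb{E}[\mathrm{e}^{\beta\int_0^t W^2}]=(\cos(\sqrt{2\beta}\,t))^{-1/2}$ is genuinely different from the paper: it buys a strictly weaker sufficient condition ($2Ct^2\sigma_0^4/\sigma^2<\pi^2/4$ rather than $<1$), at the cost of either citing the formula or justifying the analytic continuation from $\lambda\ge 0$ to $\lambda<0$, which deserves a sentence (e.g.\ monotone convergence applied to the power series in $\beta$). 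Since the paper only needs sufficiency of the stated condition, either of your two routes would serve; the Jensen one is the closest in spirit and effort to what the authors actually do.
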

\begin{proof}
Using Taylor's expansion and Hölder's inequality, we have 
\begin{equation*}
		\begin{aligned}
			&\mathbb{E}^{\mathbb{P}}\left[\mathrm{exp}\left\{C\int_0^t\gamma^2(s) \frac{ W^2(s)}{\sigma^2}\mathrm{d} s\right\}\right]
			=1+\sum_{n=1}^{+\infty}\frac{C^n}{n!}\mathbb{E}^{\mathbb{P}}\left[\left(\int_0^t\gamma^2(s) \frac{ W^2(s)}{\sigma^2}\mathrm{d} s\right)^n\right]\\
			\leq & 1+\sum_{n=1}^{+\infty}\frac{C^n}{n!}\mathbb{E}^{\mathbb{P}}\left[t^{n-1}\left(\frac{\sigma_0^4}{\sigma^2}\right)^{n}\int_0^tW^{2n}(s)\mathrm{d} s\right]
			= 1+\sum_{n=1}^{+\infty}\frac{C^n}{n!}t^{n-1}\left(\frac{\sigma_0^4}{\sigma^2}\right)^{n}\int_0^t\mathbb{E}^{\mathbb{P}}\left[W^{2n}(s)\right]\mathrm{d} s\\
			= & 1+\sum_{n=1}^{+\infty}\frac{C^n}{n!}t^{n-1}\left(\frac{\sigma_0^4}{\sigma^2}\right)^{n}\int_0^t\frac{(2n)!}{2^n n!}s^n\mathrm{d} s
			= 1+\sum_{n=1}^{+\infty}\frac{C^n}{n!}t^{n-1}\left(\frac{\sigma_0^4}{\sigma^2}\right)^{n}\frac{(2n)!}{2^n n!}\frac{t^{n+1}}{n+1}\\
			\leq & 1+\sum_{n=1}^{+\infty}\left(2Ct^2\frac{\sigma_0^4}{\sigma^2}\right)^n
			=\frac{1}{1-\left(2Ct^2\frac{\sigma_0^4}{\sigma^2}\right)},
		\end{aligned}
	\end{equation*}
	if $2Ct^2\frac{\sigma_0^4}{\sigma^2}<1$.  Thus, the lemma holds.
\end{proof}

\begin{lemma}\label{lemma:y2}
 Suppose that there exist constants $\epsilon_1, \epsilon_2, \epsilon_3>1$ such that 
 \begin{align}
     &2C(1+{\epsilon_2}+\frac{1}{\epsilon_3})T^2{\sigma_0^4\over \sigma^2}<1,\label{condition1}\\
&2C(1+\epsilon_1+\epsilon_3)(T\sigma_0^2-2\sigma^2\ln \frac{\sigma_0^2T+\sigma^2}{\sigma^2}-\frac{\sigma^4}{\sigma_0^2T+\sigma^2}+\sigma^2)<1,
         (\text{or } 2C(1+\epsilon_1+\epsilon_3)\frac{\sigma_0^6T^3}{3\sigma^4}<1),\label{condition2} \end{align}
 then 
 $$\mathbb{E}^{\mathbb{P}}\left[\mathrm{exp}\left\{C\int_0^tY^2(s)\mathrm{d} s\right\}\right]<+\infty, \ t\in[0,T].$$
\end{lemma}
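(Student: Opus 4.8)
The plan is to combine the explicit Gaussian representation of the revealing process with Lemma~\ref{lemma:w}. Write $\mu=y_0+\sigma_0\xi$, where $\xi\sim N(0,1)$ is independent of $W$ under $\mathbb{P}$. Using the formula $Y(s)=\gamma(s)\big[\sigma^{-2}(Z(s)-Z(0)+\tfrac{s}{2}\sigma^2)+\sigma_0^{-2}y_0\big]$, the identity $Z(s)-Z(0)+\tfrac{s}{2}\sigma^2=\mu s+\sigma W(s)$, and $\gamma(s)(\sigma_0^{-2}+s\sigma^{-2})=1$, one obtains
\[
Y(s)=y_0+\frac{\gamma(s)\sigma_0 s}{\sigma^2}\,\xi+\frac{\gamma(s)}{\sigma}\,W(s),\qquad s\in[0,T].
\]
So $Y$ splits into a deterministic term, a term depending only on $\xi$, and a term depending only on $W$, the last two being independent.

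Next I would bound $Y^2(s)$ pointwise by Young's inequality $2ab\le\alpha a^2+\alpha^{-1}b^2$ applied to the three cross products, distributing the parameters $\epsilon_1,\epsilon_2,\epsilon_3$ so that the $\xi$-term receives total weight $1+\epsilon_1+\epsilon_3$ and the $W$-term total weight $1+\epsilon_2+\epsilon_3^{-1}$ (the deterministic term then carries $1+\epsilon_1^{-1}+\epsilon_2^{-1}$):
\[
Y^2(s)\le\Big(1+\tfrac1{\epsilon_1}+\tfrac1{\epsilon_2}\Big)y_0^2+(1+\epsilon_1+\epsilon_3)\frac{\gamma^2(s)\sigma_0^2 s^2}{\sigma^4}\,\xi^2+\Big(1+\epsilon_2+\tfrac1{\epsilon_3}\Big)\frac{\gamma^2(s)}{\sigma^2}\,W^2(s).
\]
Integrating over $[0,t]$ with $t\le T$, exponentiating, and factoring the expectation using independence of $\xi$ and $W$ (Hölder's inequality would work just as well), $\mathbb{E}^{\mathbb{P}}\big[\exp\{C\int_0^t Y^2(s)\mathrm{d}s\}\big]$ is dominated by the product of $\exp\{C(1+\epsilon_1^{-1}+\epsilon_2^{-1})y_0^2 t\}$, of $\mathbb{E}^{\mathbb{P}}\big[\exp\{C(1+\epsilon_1+\epsilon_3)\sigma_0^2\sigma^{-4}\,\xi^2\int_0^t\gamma^2(s)s^2\mathrm{d}s\}\big]$, and of $\mathbb{E}^{\mathbb{P}}\big[\exp\{C(1+\epsilon_2+\epsilon_3^{-1})\int_0^t\gamma^2(s)\sigma^{-2}W^2(s)\mathrm{d}s\}\big]$.

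The first factor is trivially finite. The third is finite by Lemma~\ref{lemma:w} with constant $C(1+\epsilon_2+\epsilon_3^{-1})$, whose hypothesis $2C(1+\epsilon_2+\epsilon_3^{-1})t^2\sigma_0^4/\sigma^2<1$ holds for every $t\le T$ under \eqref{condition1}. For the second factor I would bound $\int_0^t\gamma^2(s)s^2\mathrm{d}s\le\int_0^T\gamma^2(s)s^2\mathrm{d}s$; the substitution $u=\sigma^2+s\sigma_0^2$ gives $\gamma(s)s=\sigma^2(1-\sigma^2/u)$, from which $\sigma_0^2\sigma^{-4}\int_0^T\gamma^2(s)s^2\mathrm{d}s$ equals exactly the parenthesised quantity in \eqref{condition2} (the coarser estimate $\gamma(s)\le\sigma_0^2$ yields the alternative value $\sigma_0^6T^3/(3\sigma^4)$); since $\mathbb{E}^{\mathbb{P}}[\exp\{\lambda\xi^2\}]=(1-2\lambda)^{-1/2}$ for $\lambda<1/2$, this factor is finite precisely under \eqref{condition2}, and the lemma follows. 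I expect no serious obstacle: the only point requiring care is choosing the Young-inequality splitting so that the three weights reproduce the exact coefficients appearing in \eqref{condition1}--\eqref{condition2}, together with the elementary evaluation of $\int_0^T\gamma^2(s)s^2\mathrm{d}s$.
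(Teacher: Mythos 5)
Your proposal is correct and follows essentially the same route as the paper's proof: the same three-way Young-inequality splitting with weights $1+\epsilon_1+\epsilon_3$, $1+\epsilon_2+\epsilon_3^{-1}$, $1+\epsilon_1^{-1}+\epsilon_2^{-1}$, the same factorization via independence of the drift and $W$, the same appeal to Lemma~\ref{lemma:w} for the Brownian part, and the same evaluation of $\int_0^T\gamma^2(s)s^2\,\mathrm{d}s$ yielding exactly the quantity in \eqref{condition2}. The only (harmless) cosmetic differences are that you standardize $\mu=y_0+\sigma_0\xi$ and invoke the explicit Gaussian moment generating function where the paper cites an external lemma, and that your parenthetical claim that H\"older's inequality ``would work just as well'' is slightly off, since it would degrade the constants in \eqref{condition1}--\eqref{condition2}.
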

\begin{proof}
	First,  by \eqref{equ:y}, we have 
	\begin{equation*}
		\begin{aligned}
			Y(t)&=\gamma(t)\left(\sigma^{-2}\left(Z(t)-Z(0)+\frac{t}{2} \sigma^2\right)+\sigma_0^{-2} y_0\right)=\gamma(t)\left(\frac{\mu t}{\sigma^2}+ \frac{ W(t)}{\sigma}+\frac{y_0}{\sigma_0^2}\right).
		\end{aligned}
	\end{equation*}
	Thus, for any $\epsilon_1, \epsilon_2, \epsilon_3 >0$, we have
	\begin{equation*}
		\begin{aligned}
			Y^2(t)&=\gamma^2(t)\left(\frac{\mu t}{\sigma^2}+ \frac{ W(t)}{\sigma}+\frac{y_0}{\sigma_0^2}\right)^2\\
			&\leq \gamma^2(t)\left((1+\epsilon_1+\epsilon_3)\frac{\mu ^2t^2}{\sigma^4}+ (1+\epsilon_2+\frac{1}{\epsilon_3})\frac{ W^2(t)}{\sigma^2}+(1+\frac{1}{\epsilon_1}+\frac{1}{\epsilon_2})\frac{y_0^2}{\sigma_0^4}\right).
		\end{aligned}
	\end{equation*}

	As $\mu$ is independent of the Brownian motion $W$ under probability measure $\mathbb{P}$, we have 
	\begin{equation}\label{equ:yestimate}
		\begin{aligned}
	&\mathbb{E}^{\mathbb{P}}\left[\mathrm{exp}\left\{C\int_0^tY^2(s)\mathrm{d} s\right\}\right]\\
			\leq & \mathbb{E}^{\mathbb{P}}\left[\mathrm{exp}\left\{C\int_0^t\gamma^2(s)\left((1+\epsilon_1+\epsilon_3)\frac{\mu ^2s^2}{\sigma^4}+ (1+\epsilon_2+\frac{1}{\epsilon_3})\frac{ W^2(s)}{\sigma^2}+(1+\frac{1}{\epsilon_1}+\frac{1}{\epsilon_2})\frac{y_0^2}{\sigma_0^4}\right)\mathrm{d} s\right\}\right]\\
			=&\mathrm{e}^{C\int_0^t(1+\frac{1}{\epsilon_1}+\frac{1}{\epsilon_2})\gamma^2(s)\frac{y_0^2}{\sigma_0^4}\mathrm{d} s}\mathbb{E}^{\mathbb{P}}\left[\mathrm{exp}\left\{C\int_0^t\gamma^2(s)\left((1+\epsilon_1+\epsilon_3)\frac{\mu ^2s^2}{\sigma^4}+ (1+\epsilon_2+\frac{1}{\epsilon_3})\frac{ W^2(s)}{\sigma^2}\right)\mathrm{d} s\right\}\right]\\
			=& C_{t,C,\epsilon_1, \epsilon_2} \mathbb{E}^{\mathbb{P}}\left[\mathrm{e}^{C(1+\epsilon_1+\epsilon_3)\int_0^t\gamma^2(s)\frac{\mu ^2s^2}{\sigma^4}\mathrm{d} s}\right]\mathbb{E}^{\mathbb{P}}\left[\mathrm{exp}\left\{C(1+\epsilon_2+\frac{1}{\epsilon_3})\int_0^t\gamma^2(s) \frac{ W^2(s)}{\sigma^2}\mathrm{d} s\right\}\right]\\
			=&C_{t,C,\epsilon_1, \epsilon_2} \mathbb{E}^{\mathbb{P}}\left[\mathrm{e}^{C_{t,C,\epsilon_1, \epsilon_3} \mu ^2}\right]\mathbb{E}^{\mathbb{P}}\left[\mathrm{exp}\left\{C(1+\epsilon_2+\frac{1}{\epsilon_3})\int_0^t\gamma^2(s) \frac{ W^2(s)}{\sigma^2}\mathrm{d} s\right\}\right],
		\end{aligned}
	\end{equation}
	where $$C_{t,C,\epsilon_1, \epsilon_2}=\mathrm{exp}\left\{C\int_0^t(1+\frac{1}{\epsilon_1}+\frac{1}{\epsilon_2})\gamma^2(s)\frac{y_0^2}{\sigma_0^4}\mathrm{d} s\right\}<\infty,\  \forall t\in [0,T],\ C,\
  \epsilon_1,\  \epsilon_2 >0, $$ and 
	$C_{t,C,\epsilon_1, \epsilon_3} =C(1+\epsilon_1+\epsilon_3)\int_0^t\gamma^2(s)\frac{s^2}{\sigma^4}\mathrm{d} s$.\\
 Furthermore, we have 
	\begin{equation*}
		\begin{aligned}
			C_{t,C,\epsilon_1, \epsilon_3} =&C(1+\epsilon_1+\epsilon_3)\int_0^t\gamma^2(s)\frac{s^2}{\sigma^4}\mathrm{d} s
			=C(1+\epsilon_1+\epsilon_3)\left(t-\frac{2\sigma^2}{\sigma_0^2}\ln \frac{\sigma_0^2t+\sigma^2}{\sigma^2}-\frac{\sigma^4}{(\sigma_0^2t+\sigma^2)\sigma_0^2}+\frac{\sigma^2}{\sigma_0^2}\right).
		\end{aligned}
	\end{equation*}
 
 Because $\mu\sim N\left(y_0,\sigma_0^2\right)$ under probability measure $\mathbb{P}$, by \citet[Lemma C.2]{guan2024equil}, Condition \eqref{condition2} is sufficient to ensure $\mathbb{E}^{\mathbb{P}}\left[\mathrm{e}^{C_{t,C,\epsilon_1, \epsilon_3} \mu ^2}\right]<\infty$. Besides, by Lemma \ref{lemma:w}, Condition \eqref{condition1} is sufficient to ensure $\mathbb{E}^{\mathbb{P}}\left[\mathrm{exp}\left\{C(1+\epsilon_2+\frac{1}{\epsilon_3})\int_0^t\gamma^2(s) \frac{ W^2(s)}{\sigma^2}\mathrm{d} s\right\}\right]<\infty$. Combining \eqref{equ:yestimate} yields that the lemma holds.

 
\end{proof}

The following proposition shows that $\pi^{*}$ is admissible.

\begin{proposition}
	\label{admissible}
	Suppose that the following conditions hold:
	$\exists \delta_1, \delta_7, \delta_8>1, \frac{1}{\delta_7}+\frac{1}{\delta_8}=1, \epsilon_3 >0$, such that
	\begin{itemize}
		\item $2(2\delta_1^2\delta_7-\delta_1 )\delta_8(1+\frac{1}{\epsilon_3})T^2\frac{\sigma_0^4}{\sigma^4}<1$;
		\item $2(2\delta_1^2\delta_7-\delta_1 )\delta_8\frac{1}{\sigma^2}(1+\epsilon_3)(T\sigma_0^2-2\sigma^2\ln \frac{\sigma_0^2T+\sigma^2}{\sigma^2}-\frac{\sigma^4}{\sigma_0^2T+\sigma^2}+\sigma^2)<1$ \\ (or $2(2\delta_1^2\delta_7-\delta_1 )\delta_8(1+\epsilon_3)\frac{\sigma_0^6T^3}{3\sigma^6}<1$).
	\end{itemize}
	For any $\tilde \mu \in \mathcal{M}$ and the feasible strategy $\pi^{*}$, let $X^{\pi^{*}}$ be the unique strong solution of the following SDE:
	\begin{equation*}
		\left\{\begin{array}{l}
			\mathrm{d}X^{\pi^{*}}(t)=rX^{\pi^{*}} (t)\mathrm{d}t+\pi^{*}(t)(\tilde\mu(t)-r)\mathrm{d}t+\sigma\pi^{*}(t)\mathrm{d}W^{\tilde \mu}(t),\ t\in [0, T],\\
			X^{\pi^{*}}(0)=x_0.
		\end{array}\right.
	\end{equation*}
	Then 
	$$
	\left\{\int_0^t \varphi_x\left(s, X^{\pi^{*}}(s), Y(s)\right)\sigma \pi^{*}(s)\mathrm{d} W^{\tilde  \mu}(s)\right\}_{t \in[0, T]} \text { and }\left\{\int_0^t \varphi_y\left(s, X^{\pi^{*}}(s), Y(s)\right) \frac{\gamma(s)}{\sigma} \mathrm{d} W^{\tilde \mu}(s)\right\}_{t \in[0, T]}
	$$
	are martingales w.r.t. the filtration $\{\mathcal{F}_{t}^{S}\}_{0\leq t \leq T}$ under the  probability measure $\mathbb{Q}^{\tilde  \mu}$. Therefore, $\pi^{*}$ is admissible.
\end{proposition}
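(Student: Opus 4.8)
The plan is to reduce the two supermartingale requirements in Definition~\ref{D4.1}(ii) to an $L^{2}$-integrability bound and then to control that bound by the exponential-moment estimates of Lemmas~\ref{lemma:w} and \ref{lemma:y2}. An $\{\mathcal{F}^{S}_{t}\}$-adapted stochastic integral against the $\mathbb{Q}^{\tilde\mu}$-Brownian motion $W^{\tilde\mu}$ with square-integrable integrand is a genuine $\mathbb{Q}^{\tilde\mu}$-martingale, hence a supermartingale, so it suffices to prove
$$\mathbb{E}^{\mathbb{Q}^{\tilde\mu}}\!\Big[\int_{0}^{T}\!\big(\varphi_{x}(s,X^{\pi^{*}}(s),Y(s))\,\sigma\pi^{*}(s)\big)^{2}\mathrm{d}s\Big]<\infty,\qquad \mathbb{E}^{\mathbb{Q}^{\tilde\mu}}\!\Big[\int_{0}^{T}\!\Big(\varphi_{y}(s,X^{\pi^{*}}(s),Y(s))\,\tfrac{\gamma(s)}{\sigma}\Big)^{2}\mathrm{d}s\Big]<\infty ;$$
adding the two integrals gives Definition~\ref{D4.1}(ii), and with the feasibility of $\pi^{*}$ (proved above) this yields admissibility. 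Now $\varphi_{x}=\mathrm{e}^{r(T-t)}\mathrm{e}^{-k\mathrm{e}^{r(T-t)}x+f(t,y)}$ and $\varphi_{y}=-\tfrac1k f_{y}(t,y)\,\mathrm{e}^{-k\mathrm{e}^{r(T-t)}x+f(t,y)}$; since $f(t,y)<0$ for $t\in[0,T)$ by Theorem~\ref{thm:ffy} (so $\mathrm{e}^{2f}\le1$), and since $|\hat\pi(t,y)|\le\tfrac{C}{k\sigma^{2}}(1+|y|)$ and $|f_{y}(t,y)|\le C_{2}(1+|y|)$ by \eqref{equ:pihat} and Theorem~\ref{theorem1}, there is $C>0$ with
$$\big(\varphi_{x}\,\sigma\hat\pi\big)^{2}+\Big(\varphi_{y}\,\tfrac{\gamma(t)}{\sigma}\Big)^{2}\le C\,(1+y^{2})\,\mathrm{e}^{-2k\mathrm{e}^{r(T-t)}x}.$$
Hence both bounds follow once one shows $\displaystyle\sup_{0\le s\le T}\mathbb{E}^{\mathbb{Q}^{\tilde\mu}}\!\big[(1+Y^{2}(s))\,\mathrm{e}^{-2k\mathrm{e}^{r(T-s)}X^{\pi^{*}}(s)}\big]<\infty$.

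Fix $s$. By variation of constants, $X^{\pi^{*}}(s)=\mathrm{e}^{rs}x_{0}+\int_{0}^{s}\mathrm{e}^{r(s-u)}\pi^{*}(u)(\tilde\mu(u)-r)\,\mathrm{d}u+\sigma\int_{0}^{s}\mathrm{e}^{r(s-u)}\pi^{*}(u)\,\mathrm{d}W^{\tilde\mu}(u)$. Using $|\tilde\mu(u)-r|\le|Y(u)|+|r|+a\sigma_{0}$ and $|\pi^{*}(u)|=|\hat\pi(u,Y(u))|\le\tfrac{C}{k\sigma^{2}}(1+|Y(u)|)$, the finite-variation part of $-2k\mathrm{e}^{r(T-s)}X^{\pi^{*}}(s)$ is bounded in absolute value by $\tfrac{C}{\sigma^{2}}(1+\int_{0}^{T}Y^{2}(u)\,\mathrm{d}u)$, so that, with $\theta(u):=-2k\mathrm{e}^{r(T-s)}\sigma\mathrm{e}^{r(s-u)}\pi^{*}(u)$ (which satisfies $\theta^{2}(u)\le\tfrac{C}{\sigma^{2}}(1+Y^{2}(u))$),
$$\mathrm{e}^{-2k\mathrm{e}^{r(T-s)}X^{\pi^{*}}(s)}\le C\exp\!\Big\{\tfrac{C_{0}}{\sigma^{2}}\!\int_{0}^{T}\!Y^{2}(u)\,\mathrm{d}u\Big\}\exp\!\Big\{\!\int_{0}^{s}\!\theta(u)\,\mathrm{d}W^{\tilde\mu}(u)\Big\}.$$
I would then dismantle the right side by a chain of H\"older inequalities with exponents $\delta_{1}$, $\delta_{7},\delta_{8}$, and an auxiliary parameter $\epsilon_{3}$: split off $(1+Y^{2}(s))$, whose $\mathbb{Q}^{\tilde\mu}$-moments are all finite since under $\mathbb{Q}^{\tilde\mu}$ the process $Y$ is a drift bounded by $a\sigma_{0}^{3}T/\sigma^{2}$ plus a martingale with deterministically bounded quadratic variation; raise $\mathrm{e}^{-2k\mathrm{e}^{r(T-s)}X^{\pi^{*}}(s)}$ to the power $\delta_{1}$ and, with $\tfrac1{\delta_{7}}+\tfrac1{\delta_{8}}=1$, separate $\exp\{\delta_{8}\delta_{1}C_{0}\sigma^{-2}\!\int_{0}^{T}Y^{2}\}$ from the stochastic exponential; and bound the latter using $\mathrm{e}^{\int\theta\,\mathrm{d}W^{\tilde\mu}}=\mathcal{E}\big(2\!\int\theta\,\mathrm{d}W^{\tilde\mu}\big)^{1/2}\mathrm{e}^{\int\theta^{2}\mathrm{d}u}$, the Cauchy--Schwarz inequality, and the mean-one property of $\mathcal{E}(2\!\int\theta\,\mathrm{d}W^{\tilde\mu})$. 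Because $\theta^{2}\le\tfrac{C}{\sigma^{2}}(1+Y^{2})$ with a constant proportional to $\delta_{1}^{2}\delta_{7}$, every branch of the argument collapses to showing $\mathbb{E}^{\mathbb{Q}^{\tilde\mu}}[\exp\{A\!\int_{0}^{T}Y^{2}(u)\,\mathrm{d}u\}]<\infty$ for a constant $A$ proportional to $(2\delta_{1}^{2}\delta_{7}-\delta_{1})\delta_{8}\,\sigma^{-2}$ --- precisely the combination appearing in the two stated hypotheses.

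For this last estimate I would change to $\mathbb{P}$ via $\mathbb{E}^{\mathbb{Q}^{\tilde\mu}}[\,\cdot\,]=\mathbb{E}^{\mathbb{P}}[\tfrac{\mathrm{d}\mathbb{Q}^{\tilde\mu}}{\mathrm{d}\mathbb{P}}\,\cdot\,]$, noting that $\big|\tfrac{Y(t)-\tilde\mu(t)}{\sigma}\big|\le\tfrac{a\sqrt{\gamma(t)}}{\sigma}\le\tfrac{a\sigma_{0}}{\sigma}$ is uniformly bounded, so $\tfrac{\mathrm{d}\mathbb{Q}^{\tilde\mu}}{\mathrm{d}\mathbb{P}}\le\exp\{-\int_{0}^{T}\tfrac{Y-\tilde\mu}{\sigma}\mathrm{d}W^{S}\}$ has finite moments of every order under $\mathbb{P}$; one more H\"older inequality (with exponents arbitrarily close to $(\infty,1)$, admissible since the hypotheses are strict) then reduces matters to $\mathbb{E}^{\mathbb{P}}[\exp\{A'\!\int_{0}^{T}Y^{2}\}]<\infty$ for some $A'$ slightly larger than $A$. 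Inserting $Y(t)=\gamma(t)\big(\tfrac{\mu t}{\sigma^{2}}+\tfrac{W(t)}{\sigma}+\tfrac{y_{0}}{\sigma_{0}^{2}}\big)$ and using that $\mu\sim N(y_{0},\sigma_{0}^{2})$ is independent of $W$ under $\mathbb{P}$, this finiteness is exactly what Lemmas~\ref{lemma:w} and \ref{lemma:y2} provide under the stated conditions: the $W$-part produces the condition involving $T^{2}\sigma_{0}^{4}/\sigma^{4}$, and the $\mu$-part the one involving the logarithmic term, which comes from $\int_{0}^{T}\gamma^{2}(s)s^{2}\sigma^{-4}\mathrm{d}s=\sigma_{0}^{-2}\big(T\sigma_{0}^{2}-2\sigma^{2}\ln\tfrac{\sigma_{0}^{2}T+\sigma^{2}}{\sigma^{2}}-\tfrac{\sigma^{4}}{\sigma_{0}^{2}T+\sigma^{2}}+\sigma^{2}\big)$. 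Tracing the estimates back yields the uniform-in-$s$ bound, so both stochastic integrals are $\mathbb{Q}^{\tilde\mu}$-martingales and $\pi^{*}$ is admissible.

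The main obstacle is that $X^{\pi^{*}}$ has neither bounded drift nor bounded diffusion coefficient --- $\pi^{*}$ grows linearly in the unbounded state $|Y|$, and $\tilde\mu$ is only confined to $\Lambda_{\cdot,Y(\cdot)}$ --- so a direct boundedness or Novikov argument fails, and $\mathrm{e}^{-2k\mathrm{e}^{r(T-s)}X^{\pi^{*}}(s)}$ must be handled by the iterated H\"older/exponential-martingale splitting above, engineered so that the one surviving requirement is a single exponential moment of $\int_{0}^{T}Y^{2}$. The intricate constants $\delta_{1},\delta_{7},\delta_{8},\epsilon_{3}$ in the hypotheses are exactly what is needed to keep the constant $A$ emerging from that chain below the exponential-integrability thresholds of Lemmas~\ref{lemma:w} and \ref{lemma:y2}. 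A secondary, purely technical point is the usual circularity in the Dol\'eans-exponential step --- the mean-one property of $\mathcal{E}(2\!\int\theta\,\mathrm{d}W^{\tilde\mu})$ needs a Novikov bound of the form $\mathbb{E}[\exp\{c\!\int_{0}^{T}Y^{2}\}]<\infty$ --- which is resolved by first establishing that exponential moment (via the change of measure to $\mathbb{P}$ and Lemmas~\ref{lemma:w}--\ref{lemma:y2}, which do not use that martingale property) and only afterwards invoking it.
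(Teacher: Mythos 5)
Your overall architecture matches the paper's: reduce Definition~\ref{def}(ii) to an $L^{2}$ bound on the integrands, bound these pointwise by $(1+Y^{2})\mathrm{e}^{-2h(t)X^{\pi^{*}}(t)+2f(t,Y(t))}$, change measure back to $\mathbb{P}$ using the bounded Girsanov kernel, run a H\"older/exponential-supermartingale chain, and finish with an exponential moment of $\int_{0}^{T}Y^{2}$ via Lemmas~\ref{lemma:w}--\ref{lemma:y2}. The gap is in the central estimate. By throwing away the factor $\mathrm{e}^{2f}$ at the outset (via $f<0$) and then bounding the drift of $-2h(s)X^{\pi^{*}}(s)$ \emph{in absolute value}, you forfeit the two structural facts the paper's proof lives on. The paper applies It\^o to $f(t,Y(t))$ and combines it with $-h(t)X^{\pi^{*}}(t)$ \emph{before} estimating: because $\pi^{*}$ contains the term $f_{y}\gamma/(k\mathrm{e}^{r(T-t)}\sigma^{2})$, the two stochastic integrals cancel except for $\int 2\tilde g_{2}(s,Y(s))\sigma\,\mathrm{d}W^{\tilde\mu}(s)$, whose integrand satisfies the exact identity $(2\tilde g_{2}\sigma)^{2}=8\tilde g$, and the combined drift satisfies $2\tilde g+2\tilde g_{2}(\tilde\mu-r)\leq-2\tilde g\leq 0$, so it \emph{helps} by offsetting part of the Novikov compensator. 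This is precisely how the leftover exponent becomes $(4\delta_{1}^{2}\delta_{7}-2\delta_{1})\delta_{8}\,\tilde g$, i.e.\ the combination $(2\delta_{1}^{2}\delta_{7}-\delta_{1})\delta_{8}/\sigma^{2}$ in front of $Y^{2}$ that the hypotheses are calibrated to.

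In your version the stochastic integrand is $\theta=-\tfrac{2}{\sigma}(f_{y}\gamma+m)$ (with $m$ the myopic numerator), which you only control through $|\hat\pi|\leq C(1+|y|)$ and $|f_{y}|\leq C_{2}(1+|y|)$ with non-explicit constants, and the drift contributes an additional $+\,\mathrm{const}\cdot\sigma^{-2}\int Y^{2}$ rather than a nonpositive term. Consequently the constant $A$ you end up needing in $\mathbb{E}^{\mathbb{P}}[\exp\{A\int_{0}^{T}Y^{2}\}]<\infty$ is not ``precisely the combination appearing in the two stated hypotheses'' as you assert without computation: even granting the sharp asymptotics $(f_{y}\gamma+m)^{2}\leq(1+\epsilon)m^{2}+C$ (which itself requires the sign and monotonicity information of Theorem~\ref{thm:ffy} and the bounds \eqref{equ:fy1}--\eqref{equ:fy2}, not just linear growth), your Cauchy--Schwarz/H\"older chain yields a leftover of at least $4\delta_{7}\delta_{8}\tilde g$ (minimum $16\tilde g$) versus the paper's $(4\delta_{7}-2)\delta_{8}\tilde g$ (minimum $\approx 11.66\,\tilde g$) in the limit $\delta_{1}\to 1$. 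So there are parameter values satisfying the stated conditions for which your chain does not close, and the proposition as stated is not proved. The fix is to retain $\mathrm{e}^{f(t,Y(t))}$, apply It\^o to it jointly with $\mathrm{e}^{-h(t)X^{\pi^{*}}(t)}$, and exploit the cancellation and the inequality $\tilde g_{2}(s,Y(s))(\tilde\mu(s)-r)\leq-2\tilde g(s,Y(s))$ before any absolute values are taken.
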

\begin{proof}
	We only need to show 
	\begin{equation}\label{inqu:phix}
		\begin{aligned}
			&\mathbb{E}^{\mathbb{Q}^{\tilde \mu}}\left[\int_0^T \left|\varphi_x\left(t, X^{\pi^{*}} (t), Y(t)\right)\sigma \pi^{*}(t)\right|^2\mathrm{d}t\right]<+\infty,\\
			&\mathbb{E}^{\mathbb{Q}^{\tilde \mu}}\left[\int_0^T \left|\varphi_y\left(t, X^{\pi^{*}} (t), Y(t)\right) \frac{\gamma(t)}{\sigma}\right|^2 \mathrm{d}t\right]<+\infty.
		\end{aligned}
	\end{equation}
	Letting $h(t)=k \mathrm{e}^{r(T-t)}$, there exist two positive constants $B_1$ and $B_2$ such that
	\begin{equation*}
		\begin{aligned}	
			\left|\varphi_x\left(t, X^{\pi^{*}}(t), Y(t)\right)\sigma \pi^{*}(t)\right|^2&=h^2(t)\left|\varphi\left(t, X^{\pi^{*}}(t), Y(t)\right)\sigma \pi^{*}(t)\right|^2\\
			&\leq B_1(1+|Y(t)|^2)\mathrm{e}^{-2h(t)X^{\pi^{*}}(t)+2f(t,Y(t))},\\
			\left|\varphi_y\left(t, X^{\pi^{*}}(t), Y(t)\right) \frac{\gamma(t)}{\sigma}\right|^2&=\left|f_y(t,Y(t))\varphi\left(t, X^{\pi^{*}}(t), Y(t)\right)\frac{\gamma(t)}{\sigma}\right|^2\\
			&\leq  B_2(1+|Y(t)|^2)\mathrm{e}^{-2h(t)X^{\pi^{*}}(t)+2f(t,Y(t))}.
		\end{aligned}
	\end{equation*}
	Therefore, we only need to show 
	$$
	\begin{aligned}
		\mathbb{E}^{\mathbb{Q}^{\tilde \mu}}\left[\int_0^T (1+|Y(t)|^2)\mathrm{e}^{-2h(t)X^{\pi^{*}}(t)+2f(t,Y(t))}\mathrm{d}t\right]&=\int_0^T\mathbb{E}^{\mathbb{Q}^{\tilde \mu}}\left[(1+|Y(t)|^2)\mathrm{e}^{-2h(t)X^{\pi^{*}}(t)+2f(t,Y(t))}\right]\mathrm{d}t\\
		&<+\infty.
	\end{aligned}
	$$
	Using Itô's formula and PDE (\ref{pde1}) that $f(t,y)$ satisfies, we obtain
	\begin{equation*}
		\begin{aligned}
			\mathrm{e}^{-rt}X^{\pi^{*}}(t)=&\ x_0+\int_0^t \mathrm{e}^{-rs} \pi^{*}(s)(\tilde\mu(s)-r)\mathrm{d} s+\int_0^t \mathrm{e}^{-rs}\sigma \pi^{*}(s)\mathrm{d} W^{\tilde \mu}(s),\\
			\mathrm{e}^{-h(t)X^{\pi^{*}}(t)}=&\ \mathrm{exp}\left\{-k\mathrm{e}^{rT}x_0-k\mathrm{e}^{rT}\int_0^t \mathrm{e}^{-rs} \pi^{*}(s)(\tilde\mu(s)-r)\mathrm{d} s-k\mathrm{e}^{rT}\int_0^t \mathrm{e}^{-rs}\sigma \pi^{*}(s)\mathrm{d} W^{\tilde \mu}(s)\right\},\\
			\mathrm{e}^{f(t,Y(t))}=&\ \mathrm{exp}\bigg\{f(0,y_0)+\int_0^t f_y(s,Y(s))\frac{\gamma(s)}{\sigma}\mathrm{d} W^{\tilde \mu}(s)+\int_0^tf_y(s,Y(s))\frac{\gamma(s)}{\sigma^2}(\tilde\mu(s)-r)\\&\ +\tilde g(s,Y(s))\mathrm{d} s\bigg\},
		\end{aligned}
	\end{equation*}
	where $\tilde g$ is defined by \eqref{equ:gt} in the proof of Theorem \ref{theorem1}.
	
	Combining with the expression \eqref{equ:pihat} of $\pi^{*}$, we have  that
	\begin{equation*}
		\begin{aligned}
			\mathrm{e}^{-h(t)X^{\pi^{*}}(t)+f(t,Y(t))}&=B_3\mathrm{exp}\left\{-k\mathrm{e}^{rT}\int_0^t \mathrm{e}^{-rs} \pi^{*}(s)(\tilde\mu(s)-r)\mathrm{d} s-k\mathrm{e}^{rT}\int_0^t \mathrm{e}^{-rs}\sigma \pi^{*}(s)\mathrm{d} W^{\tilde \mu}(s)\right\}\\
			&\quad\times \mathrm{exp}\left\{\int_0^t f_y(s,Y(s))\frac{\gamma(s)}{\sigma}\mathrm{d} W^{\tilde \mu}(s)+\int_0^tf_y(s,Y(s))\frac{\gamma(s)}{\sigma^2}(\tilde\mu(s)-r)+\tilde g(s,Y(s))\mathrm{d} s\right\}\\
			&=B_3 \mathrm{exp}\left\{\int_0^t\tilde g(s,Y(s))\mathrm{d} s+\int_0^t\tilde g_2(s,Y(s))(\tilde\mu(s)-r)\mathrm{d} s+\int_0^t\tilde g_2(s,Y(s))\sigma\mathrm{d}  W^{\tilde \mu}(s)\right\},                      
		\end{aligned}
	\end{equation*}
	where 
\begin{equation}\label{equ:g2t}
\tilde g_2(t,y)\triangleq\frac{r-y+a\sqrt{\gamma(t)}}{\sigma^2}\mathrm{I}_{\{r-y+a\sqrt{\gamma(t)}\leq 0\}}+\frac{r-y-a\sqrt{\gamma(t)}}{\sigma^2}\mathrm{I}_{\{r-y-a\sqrt{\gamma(t)}\geq 0\}}\end{equation} and $B_3=\mathrm{e}^{-k\mathrm{e}^{rT}x_0+f(0,y_0)}$ is a positive constant. 
 
 As such, we have $\tilde g_2(s,Y(s))(\tilde\mu(s)-r)\leq -2\tilde g(s,Y(s))$ and $\tilde g_2(s,Y(s))(Y(s)-r)\leq -2\tilde g(s,Y(s))$. 
 
 Then  \begin{equation*}
		\begin{aligned}
			&\mathbb{E}^{\mathbb{Q}^{\tilde  \mu}}\left[(1+|Y(t)|^2)\mathrm{e}^{-2h(t)X^{\pi^{*}}(t)+2f(t,Y(t))}\right]\\
			=&\mathbb{E}^{\mathbb{P}}\left[(1+|Y(t)|^2)\mathrm{e}^{-2h(t)X^{\pi^{*}}(t)+2f(t,Y(t))}\mathrm{exp}\left\{-\int_0^t \frac{Y(s)-\tilde \mu(s)}{\sigma} \mathrm{d} W^{S}(s)-\frac{1}{2}\int_0^t \left(\frac{Y(s)-\tilde \mu(s)}{\sigma} \right)^2\mathrm{d} s\right\}\right]\\
			=&B_3^2\mathbb{E}^{\mathbb{P}}\left[(1+|Y(t)|^2)\mathrm{exp}\left\{\int_0^t2\tilde g(s,Y(s))\mathrm{d} s+\int_0^t2\tilde g_2(s,Y(s))(\tilde\mu(s)-r)\mathrm{d} s+\int_0^t2\tilde g_2(s,Y(s))\sigma\mathrm{d}  W^{\tilde \mu}(s)\right\}\right.\\
			&\left.\times\mathrm{exp}\left\{-\int_0^t \frac{Y(s)-\tilde \mu(s)}{\sigma} \mathrm{d} W^{S}(s)-\frac{1}{2}\int_0^t \left(\frac{Y(s)-\tilde \mu(s)}{\sigma} \right)^2\mathrm{d} s\right\}\right]\\
			=&B_3^2\mathbb{E}^{\mathbb{P}}\left[(1+|Y(t)|^2)\mathrm{exp}\left\{\int_0^t2\tilde g(s,Y(s))\mathrm{d} s+\int_0^t2\tilde g_2(s,Y(s))(\tilde\mu(s)-r)\mathrm{d} s\right\}\right.\\
			&\left. \times\mathrm{exp}\left\{\int_0^t2\tilde g_2(s,Y(s))\sigma(\frac{Y(s)-\tilde \mu(s)}{\sigma}\mathrm{d} s+\mathrm{d}  W^{S}(s))\right\}\right.\\
			&\left. \times\mathrm{exp}\left\{-\int_0^t \frac{Y(s)-\tilde \mu(s)}{\sigma} \mathrm{d} W^{S}(s)-\frac{1}{2}\int_0^t \left(\frac{Y(s)-\tilde \mu(s)}{\sigma} \right)^2\mathrm{d} s\right\}\right]\\
			=&B_3^2\mathbb{E}^{\mathbb{P}}\left[(1+|Y(t)|^2)\mathrm{exp}\left\{\int_0^t2\tilde g(s,Y(s))\mathrm{d} s+\int_0^t2\tilde g_2(s,Y(s))(Y(s)-r)\mathrm{d} s+\int_0^t2\tilde g_2(s,Y(s))\sigma\mathrm{d}  W^{S}(s)\right\}\right.\\
			&\left. \times\mathrm{exp}\left\{-\int_0^t \frac{Y(s)-\tilde \mu(s)}{\sigma} \mathrm{d} W^{S}(s)-\frac{1}{2}\int_0^t \left(\frac{Y(s)-\tilde \mu(s)}{\sigma} \right)^2\mathrm{d} s\right\}\right]\\
			\leq& B_3^2\mathbb{E}^{\mathbb{P}}\left[(1+|Y(t)|^2)\mathrm{exp}\left\{\int_0^t-2\tilde g(s,Y(s))\mathrm{d} s+\int_0^t2\tilde g_2(s,Y(s))\sigma\mathrm{d}  W^{S}(s)\right\}\right.\\
			&\left. \times\mathrm{exp}\left\{-\int_0^t \frac{Y(s)-\tilde \mu(s)}{\sigma} \mathrm{d} W^{S}(s)-\frac{1}{2}\int_0^t \left(\frac{Y(s)-\tilde \mu(s)}{\sigma} \right)^2\mathrm{d} s\right\}\right].
		\end{aligned}
	\end{equation*}
Using Hölder's inequality, for any constants $\delta_1, \delta_2>1$ with $\frac{1}{\delta_1}+\frac{1}{\delta_2}=1$, we have
	\begin{equation}\label{inequ:0}
		\begin{aligned}
			&\mathbb{E}^{\mathbb{Q}^{\tilde  \mu}}\left[(1+|Y(t)|^2)\mathrm{e}^{-2h(t)X^{\pi^{*}}(t)+2f(t,Y(t))}\right]\\
			\leq& B_3^2\left\{\mathbb{E}^{\mathbb{P}}\left[\mathrm{exp}\left\{\int_0^t-2\delta_1 \tilde g(s,Y(s))\mathrm{d} s+\int_0^t2 \delta_1 \tilde g_2(s,Y(s))\sigma\mathrm{d}  W^{S}(s)\right\}\right]\right\}^{\frac{1}{\delta_1}}\\
			&\times\left\{\mathbb{E}^{\mathbb{P}}\left[(1+|Y(t)|^2)^{\delta_2}\mathrm{exp}\left\{-\int_0^t \delta_2\frac{Y(s)-\tilde \mu(s)}{\sigma} \mathrm{d} W^{S}(s)-\frac{1}{2}\int_0^t\delta_2 \left(\frac{Y(s)-\tilde \mu(s)}{\sigma} \right)^2\mathrm{d} s\right\}\right]\right\}^{\frac{1}{\delta_2}}.
		\end{aligned}
	\end{equation}
Based on the above discussion, to prove \eqref{inqu:phix}, it suffices to show that both terms on the right-hand side of \eqref{inequ:0} are finite. First, we demonstrate that the second term on the right-hand side of \eqref{inequ:0} is finite.  Using Hölder's inequality, for any constants  $\delta_3, \delta_4>1$ with  $\frac{1}{\delta_3}+\frac{1}{\delta_4}=1$, 
	\begin{equation}\label{inequ:1}
		\begin{aligned}	&\mathbb{E}^{\mathbb{P}}\left[(1+|Y(t)|^2)^{\delta_2}\mathrm{exp}\left\{-\int_0^t \delta_2\frac{Y(s)-\tilde \mu(s)}{\sigma} \mathrm{d} W^{S}(s)-\frac{1}{2}\int_0^t\delta_2 \left(\frac{Y(s)-\tilde \mu(s)}{\sigma} \right)^2\mathrm{d} s\right\}\right]\\
			\leq& \left\{\mathbb{E}^{\mathbb{P}}\left[(1+|Y(t)|^2)^{\delta_2\delta_3}\right]\right\}^{\frac{1}{\delta_3}}\times\left\{\mathbb{E}^{\mathbb{P}}\left[\mathrm{exp}\left\{-\int_0^t \delta_2\delta_4\frac{Y(s)-\tilde \mu(s)}{\sigma} \mathrm{d} W^{S}(s)-\frac{1}{2}\int_0^t\delta_2 \delta_4\left(\frac{Y(s)-\tilde \mu(s)}{\sigma} \right)^2\mathrm{d} s\right\}\right]\right\}^{\frac{1}{\delta_4}}.
		\end{aligned}
	\end{equation}
Because $Y(t)$ is normally distributed under the probability measure $\mathbb{P}$, there exists a constant $B_4>0$ such that \begin{equation}\label{inequ:2}
\mathbb{E}^{\mathbb{P}}\left[(1+|Y(t)|^2)^{\delta_2\delta_3}\right]\leq B_4<+\infty.
\end{equation}

       Using Hölder's inequality, for any $\delta_5, \delta_6>1$ with  $\frac{1}{\delta_5}+\frac{1}{\delta_6}=1$, we have 
	\begin{equation}
	\begin{aligned}	&\mathbb{E}^{\mathbb{P}}\left[\mathrm{exp}\left\{-\int_0^t \delta_2\delta_4\frac{Y(s)-\tilde \mu(s)}{\sigma} \mathrm{d} W^{S}(s)-\frac{1}{2}\int_0^t\delta_2 \delta_4\left(\frac{Y(s)-\tilde \mu(s)}{\sigma} \right)^2\mathrm{d} s\right\}\right]\\
		\leq &\left\{\mathbb{E}^{\mathbb{P}}\left[\mathrm{exp}\left\{-\int_0^t \delta_2\delta_4\delta_5\frac{Y(s)-\tilde \mu(s)}{\sigma} \mathrm{d} W^{S}(s)-\frac{1}{2}\int_0^t\delta_2^2 \delta_4^2\delta_5^2\left(\frac{Y(s)-\tilde \mu(s)}{\sigma} \right)^2\mathrm{d} s\right\}\right]\right\}^{\frac{1}{\delta_5}} \\&\times\left\{\mathbb{E}^{\mathbb{P}}\left[\mathrm{exp}\left\{\frac{1}{2}\int_0^t(\delta_2^2 \delta_4^2\delta_5-\delta_2 \delta_4)\delta_6\left(\frac{Y(s)-\tilde \mu(s)}{\sigma} \right)^2\mathrm{d} s\right\}\right]\right\}^{\frac{1}{\delta_6}}.
	\end{aligned}\label{inequ:3}
	\end{equation}
	Because $$\left\{\mathrm{exp}\left\{-\int_0^t \delta_2\delta_4\delta_5\frac{Y(s)-\tilde \mu(s)}{\sigma} \mathrm{d} W^{S}(s)-\frac{1}{2}\int_0^t\delta_2^2 \delta_4^2\delta_5^2\left(\frac{Y(s)-\tilde \mu(s)}{\sigma} \right)^2\mathrm{d} s\right\}:0\le t\le T\right\}$$ is a supermartingale w.r.t. the filtration $\{\mathcal{F}_{t}^{S}\}_{0\leq t \leq T}$ under the  probability measure $\mathbb{P}$, we have 
 \begin{equation}\label{inequ:4}
\mathbb{E}^{\mathbb{P}}\left[\mathrm{exp}\left\{-\int_0^t \delta_2\delta_4\delta_5\frac{Y(s)-\tilde \mu(s)}{\sigma} \mathrm{d} W^{S}(s)-\frac{1}{2}\int_0^t\delta_2^2 \delta_4^2\delta_5^2\left(\frac{Y(s)-\tilde \mu(s)}{\sigma} \right)^2\mathrm{d} s\right\}\right]\leq 1.
\end{equation}
	Meanwhile, due to the uniform boundedness of $\Lambda_{t,y}$, there exists a constant $B_5>0$ such that 
 \begin{equation}\label{inequ:5}
\mathbb{E}^{\mathbb{P}}\left[\mathrm{exp}\left\{\frac{1}{2}\int_0^t(\delta_2^2 \delta_4^2\delta_5-\delta_2 \delta_4)\delta_6\left(\frac{Y(s)-\tilde \mu(s)}{\sigma} \right)^2\mathrm{d} s\right\}\right]\leq B_5<\infty.
\end{equation}
 
  Combining inequalities \eqref{inequ:1}, \eqref{inequ:2}, \eqref{inequ:3}, \eqref{inequ:4}, and \eqref{inequ:5}, there exists  a constant $B_6>0$ such that
$$\mathbb{E}^{\mathbb{P}}\left[(1+|Y(t)|^2)^{\delta_2}\mathrm{exp}\left\{-\int_0^t \delta_2\frac{Y(s)-\tilde \mu(s)}{\sigma} \mathrm{d} W^{S}(s)-\frac{1}{2}\int_0^t\delta_2 \left(\frac{Y(s)-\tilde \mu(s)}{\sigma} \right)^2\mathrm{d} s\right\}\right]\leq B_6 < +\infty$$
and the second term on the right side of \eqref{inequ:0} is finite. Then it remains to prove that the first term on the right side of \eqref{inequ:0} is finite, i.e., estimate the expectation $$\mathbb{E}^{\mathbb{P}}\left[\mathrm{exp}\left\{\int_0^t-2\delta_1 \tilde g(s,Y(s))\mathrm{d} s+\int_0^t2 \delta_1 \tilde g_2(s,Y(s))\sigma\mathrm{d}  W^{S}(s)\right\}\right].$$
	
 Using Hölder's inequality, for any   constants $\delta_7, \delta_8>1$ with $\frac{1}{\delta_7}+\frac{1}{\delta_8}=1$, we have 
	\begin{equation}
		\begin{aligned}
  &\mathbb{E}^{\mathbb{P}}\left[\mathrm{exp}\left\{\int_0^t-2\delta_1 \tilde g(s,Y(s))\mathrm{d} s+\int_0^t2 \delta_1 \tilde g_2(s,Y(s))\sigma\mathrm{d}  W^{S}(s)\right\}\right]\\
			\leq& 
   \left\{\mathbb{E}^{\mathbb{P}}\left[\mathrm{exp}\left\{\int_0^t-4\delta_1^2\delta_7^2 \tilde g(s,Y(s))\mathrm{d} s+\int_0^t2 \delta_1 \delta_7\tilde g_2(s,Y(s))\sigma\mathrm{d}  W^{S}(s)\right\}\right]\right\}^{\frac{1}{\delta_7}} \\&\times\left\{\mathbb{E}^{\mathbb{P}}\left[\mathrm{exp}\left\{\int_0^t(4\delta_1^2\delta_7-2\delta_1 )\delta_8\tilde g(s,Y(s))\mathrm{d} s\right\}\right]\right\}^{\frac{1}{\delta_8}}.
		\end{aligned}\label{inqu:21}
	\end{equation}
	 
   Because $$\left\{\mathrm{exp}\left\{\int_0^t-4\delta_1^2\delta_7^2 \tilde g(s,Y(s))\mathrm{d} s+\int_0^t2 \delta_1 \delta_7\tilde g_2(s,Y(s))\sigma\mathrm{d}  W^{S}(s)\right\}:0\le t\le T\right\}$$ is a supermartingale w.r.t. the filtration $\{\mathcal{F}_{t}^{S}\}_{0\leq t \leq T}$ under the probability measure $\mathbb{P}$, we have 
	$$\mathbb{E}^{\mathbb{P}}\left[\mathrm{exp}\left\{\int_0^t-4\delta_1^2\delta_7^2 \tilde g(s,Y(s))\mathrm{d} s+\int_0^t2 \delta_1 \delta_7\tilde g_2(s,Y(s))\sigma\mathrm{d}  W^{S}(s)\right\}\right]\leq 1.$$
	Then the first term on the right side of \eqref{inqu:21} is finite. It remains to show that the second term on the right side of \eqref{inqu:21} is finite. By the form of $\tilde g$, we  have that for any $\epsilon>0$,
 $$\mathbb{E}^{\mathbb{P}}\left[\mathrm{exp}\left\{\int_0^t(4\delta_1^2\delta_7-2\delta_1 )\delta_8\tilde g(s,Y(s))\mathrm{d} s\right\}\right]\leq \mathbb{E}^{\mathbb{P}}\left[\mathrm{exp}\left\{\int_0^t(4\delta_1^2\delta_7-2\delta_1 )\delta_8[(\frac{1}{2\sigma^2}+\epsilon)Y^2(s)+C_{\epsilon}]\mathrm{d} s\right\}\right],$$
	where  $C_{\epsilon}$ is a constant depend on $\epsilon$.

Therefore, we only need to show that $\mathbb{E}^{\mathbb{P}}\left[\mathrm{exp}\left\{C\int_0^tY^2(s)\mathrm{d} s\right\}\right]$ is finite, where $C=(4\delta_1^2\delta_7-2\delta_1 )\delta_8(\frac{1}{2\sigma^2}+\epsilon)>0$ is a constant. 

Based on Lemma \ref{lemma:y2}, if the following conditions hold:
	$\exists \delta_1, \delta_7, \delta_8>1, \frac{1}{\delta_7}+\frac{1}{\delta_8}=1, \epsilon, \epsilon_1, \epsilon_2,\epsilon_3 >0$, such that
	\begin{itemize}
		\item $2(4\delta_1^2\delta_7-2\delta_1 )\delta_8(\frac{1}{2\sigma^2}+\epsilon)(1+\epsilon_2+\frac{1}{\epsilon_3})T^2\frac{\sigma_0^4}{\sigma^2}<1$;
		\item $2(4\delta_1^2\delta_7-2\delta_1 )\delta_8(\frac{1}{2\sigma^2}+\epsilon)(1+\epsilon_1+\epsilon_3)(T\sigma_0^2-2\sigma^2\ln \frac{\sigma_0^2T+\sigma^2}{\sigma^2}-\frac{\sigma^4}{\sigma_0^2T+\sigma^2}+\sigma^2)<1$ \\ (or $2(4\delta_1^2\delta_7-2\delta_1 )\delta_8(\frac{1}{2\sigma^2}+\epsilon)(1+\epsilon_1+\epsilon_3)\frac{\sigma_0^6T^3}{3\sigma^4}<1$),
	\end{itemize}
	then 
	$$\mathbb{E}^{\mathbb{Q}^{\tilde \mu}}\left[\int_0^T (1+|Y(t)|^2)\mathrm{e}^{-2h(t)X^{\pi^{*}}(t)+2f(t,Y(t))}\mathrm{d}t\right]<\infty.$$
	These conditions is equivalent to 
	$\exists \delta_1, \delta_7, \delta_8>1, \frac{1}{\delta_7}+\frac{1}{\delta_8}=1, \epsilon_3 >0$ such that
	\begin{itemize}
		\item $2(2\delta_1^2\delta_7-\delta_1 )\delta_8(1+\frac{1}{\epsilon_3})T^2\frac{\sigma_0^4}{\sigma^4}<1$;
		\item $2(2\delta_1^2\delta_7-\delta_1 )\delta_8\frac{1}{\sigma^2}(1+\epsilon_3)(T\sigma_0^2-2\sigma^2\ln \frac{\sigma_0^2T+\sigma^2}{\sigma^2}-\frac{\sigma^4}{\sigma_0^2T+\sigma^2}+\sigma^2)<1$ \\ (or $2(2\delta_1^2\delta_7-\delta_1 )\delta_8(1+\epsilon_3)\frac{\sigma_0^6T^3}{3\sigma^6}<1$).
	\end{itemize}
	
	Thus, Proposition \ref{admissible} is proved.
\end{proof}

\begin{remark}
Under the assumptions of Proposition \ref{admissible}, using a similar approach, it can be shown that there exists a sufficiently small $\delta > 0$ such that any progressively measurable (relative to $\{\mathcal{F}_{t}^{S}\}_{0\leq t \leq T}$) investment strategy process $\pi=\left\{\pi(t): 0 \leq t \leq T\right\}$ satisfying $|\pi(t)-\pi^{*}(t)| \leq \delta (1 + |Y(t)|)$ for all $t \in [0, T]$ is admissible. This result highlights the abundance of admissible strategies.
\end{remark}
\begin{remark}

The assumptions in Proposition \ref{admissible} are quite mild. If the time span $T$ is relatively short, these assumptions naturally hold. Due to the investor's learning effect, she gradually becomes more sophisticated, making the time required to adopt a robust investment strategy relatively short.  Consequently, the assumptions are reasonable.
\end{remark}

\subsection{Verification theorem}
In this subsection, we give the verification theorem and prove the optimality of the candidate robust optimal investment strategy $\pi^{*}$.

Let 
\begin{equation*}
	\begin{aligned}
		\hat\mu(t , y;\pi) = \begin{cases} y-a\sqrt{\gamma(t)} ,  \qquad &\pi>  \frac{f_y(t,y)\gamma(t)}{k \mathrm{e}^{r(T-t)}\sigma^2}, \\
			y ,\qquad  &\pi=  \frac{f_y(t,y)\gamma(t)}{k \mathrm{e}^{r(T-t)}\sigma^2},\\
			y+a\sqrt{\gamma(t)},\qquad &\pi<  \frac{f_y(t,y)\gamma(t)}{k \mathrm{e}^{r(T-t)}\sigma^2}. \end{cases} \\
	\end{aligned}
\end{equation*}
Note that we can define $\hat\mu(t , y;\pi) \in \Lambda_{t,y}$ arbitrarily if $\pi=  \frac{f_y(t,y)\gamma(t)}{k \mathrm{e}^{r(T-t)}\sigma^2}$.

Denote $\tilde\mu^{*}=\{\tilde\mu^{*}(t)=\hat\mu(t , Y(t);\pi^{*}(t)):0\leq t\leq T \}$, we know that $\tilde\mu^{*}\in \mathcal{M}$.
Let $X^{\pi^{*}}$ be the unique strong solution of the following SDE:
\begin{equation*}
	\left\{\begin{array}{l}
		\mathrm{d}X^{\pi^{*}}(t)=rX^{\pi^{*}}(t)\mathrm{d}t+\pi^{*}(t)(\tilde\mu^{*}(t)-r)\mathrm{d}t+\sigma\pi^{*}(t)\mathrm{d}W^{\tilde \mu^{*}}(t),\ t\in [0, T],\\
		X^{\pi^{*}}(0)=x_0.
	\end{array}\right.
\end{equation*}
We are now able to show the main theorem in this section.
\begin{theorem}[Verification Theorem]
	Under the assumptions of Proposition \ref{admissible}, for the robust optimal control problem \eqref{obj1},  
 {$\pi^{*}$} is the robust optimal investment strategy, $\tilde{\mu}^{*}$ represents the worst-case scenario for $\mu$, and
	$$V(x_0)=\max_{\pi\in \Pi}\min_{\mathbb{Q}^{\tilde \mu}\in \mathcal{Q}}\mathbb{E}^{\mathbb{Q}^{\tilde \mu}}[U(X^{\pi}(T))]=\min_{\mathbb{Q}^{\tilde \mu}\in\mathcal{Q}}\mathbb{E}^{\mathbb{Q}^{\tilde \mu}}[U(X^{\pi^{*}}(T))]=\mathbb{E}^{\mathbb{Q}^{\tilde \mu^{*}}}[U(X^{\pi^{*}}(T))]=\varphi\left(0,x_0,y_0\right).$$
	\label{theorem2}
\end{theorem}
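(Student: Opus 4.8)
The plan is to establish a two-sided inequality pinning $V(x_0)$ between the candidate value $\varphi(0,x_0,y_0)$ from both directions, using the HJBI equation and the supermartingale/martingale structure already arranged in Definition~\ref{D4.1} and Proposition~\ref{admissible}. Concretely, I would proceed as follows. \emph{Step 1 (lower bound via $\pi^*$ against every $\tilde\mu$).} Fix an arbitrary $\tilde\mu\in\mathcal{M}$ with its measure $\mathbb{Q}^{\tilde\mu}$. Apply It\^o's formula to $\varphi(t,X^{\pi^*}(t),Y(t))$ along the dynamics of $(X^{\pi^*},Y)$ under $\mathbb{Q}^{\tilde\mu}$. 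The drift term is exactly the expression inside the braces of the HJBI equation~\eqref{hjbi0} evaluated at $\pi=\pi^*(t)$ and at the chosen $\tilde\mu(t)$; since $\pi^*(t)=\hat\pi(t,Y(t))$ attains the supremum and $\hat\mu(t,Y(t);\pi^*(t))$ attains the infimum over $\mu\in\Lambda_{t,Y(t)}$, for a \emph{generic} $\tilde\mu(t)\in\Lambda_{t,Y(t)}$ the drift is $\ge 0$. Hence $\varphi(t,X^{\pi^*}(t),Y(t))$ is a $\mathbb{Q}^{\tilde\mu}$-submartingale up to the stochastic integral term, which by Proposition~\ref{admissible} is a true martingale (its two pieces $\int\varphi_x\sigma\pi^*\,\mathrm dW^{\tilde\mu}$ and $\int\varphi_y\tfrac{\gamma}{\sigma}\,\mathrm dW^{\tilde\mu}$ are martingales). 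Taking expectations and using the terminal condition $\varphi(T,x,y)=U(x)$ gives $\mathbb{E}^{\mathbb{Q}^{\tilde\mu}}[U(X^{\pi^*}(T))]\ge\varphi(0,x_0,y_0)$. Minimizing over $\mathbb{Q}^{\tilde\mu}\in\mathcal{Q}$ yields $\min_{\mathbb{Q}^{\tilde\mu}}\mathbb{E}^{\mathbb{Q}^{\tilde\mu}}[U(X^{\pi^*}(T))]\ge\varphi(0,x_0,y_0)$, and since $\pi^*\in\Pi$ (admissibility, Proposition~\ref{admissible}), $V(x_0)\ge\varphi(0,x_0,y_0)$.

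\emph{Step 2 (upper bound via $\tilde\mu^*$ against every admissible $\pi$).} Now fix the specific worst-case process $\tilde\mu^*(t)=\hat\mu(t,Y(t);\pi^*(t))$ and its measure $\mathbb{Q}^{\tilde\mu^*}$, and let $\pi\in\Pi$ be arbitrary. Apply It\^o to $\varphi(t,X^{\pi}(t),Y(t))$ under $\mathbb{Q}^{\tilde\mu^*}$. The key point is that with $\mu$ frozen at $\tilde\mu^*(t)$, the inner infimum in~\eqref{hjbi0} is already "used up": the bracket becomes $\sup_{\pi\in\mathbb R}\{\,\cdots\,\}$ of a concave (in $\pi$) expression whose maximizer is $\hat\pi(t,Y(t))=\pi^*(t)$, so for the arbitrary $\pi(t)$ the drift is $\le 0$. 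Thus $\varphi(t,X^{\pi}(t),Y(t))$ is a $\mathbb{Q}^{\tilde\mu^*}$-supermartingale modulo the stochastic integral, and by condition (ii) of Definition~\ref{D4.1} that integral is a $\mathbb{Q}^{\tilde\mu^*}$-supermartingale as well; hence the whole process is a genuine supermartingale. Taking expectations gives $\mathbb{E}^{\mathbb{Q}^{\tilde\mu^*}}[U(X^{\pi}(T))]\le\varphi(0,x_0,y_0)$, so $\min_{\mathbb{Q}^{\tilde\mu}}\mathbb{E}^{\mathbb{Q}^{\tilde\mu}}[U(X^{\pi}(T))]\le\mathbb{E}^{\mathbb{Q}^{\tilde\mu^*}}[U(X^{\pi}(T))]\le\varphi(0,x_0,y_0)$ for every $\pi\in\Pi$, whence $V(x_0)\le\varphi(0,x_0,y_0)$.

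\emph{Step 3 (closing the loop and identifying the saddle point).} Combining Steps 1 and 2 gives $V(x_0)=\varphi(0,x_0,y_0)$. To get the chain of equalities in the statement, note Step~1 applied with $\tilde\mu=\tilde\mu^*$ gives $\mathbb{E}^{\mathbb{Q}^{\tilde\mu^*}}[U(X^{\pi^*}(T))]\ge\varphi(0,x_0,y_0)$, while Step~2 applied with $\pi=\pi^*$ gives $\mathbb{E}^{\mathbb{Q}^{\tilde\mu^*}}[U(X^{\pi^*}(T))]\le\varphi(0,x_0,y_0)$; hence $\mathbb{E}^{\mathbb{Q}^{\tilde\mu^*}}[U(X^{\pi^*}(T))]=\varphi(0,x_0,y_0)$. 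Then $\min_{\mathbb{Q}^{\tilde\mu}}\mathbb{E}^{\mathbb{Q}^{\tilde\mu}}[U(X^{\pi^*}(T))]\le\mathbb{E}^{\mathbb{Q}^{\tilde\mu^*}}[U(X^{\pi^*}(T))]=\varphi(0,x_0,y_0)$, which together with the reverse inequality from Step~1 forces $\min_{\mathbb{Q}^{\tilde\mu}}\mathbb{E}^{\mathbb{Q}^{\tilde\mu}}[U(X^{\pi^*}(T))]=\varphi(0,x_0,y_0)$, and the minimum is attained at $\mathbb{Q}^{\tilde\mu^*}$. This delivers every equality in the theorem and identifies $(\pi^*,\tilde\mu^*)$ as the saddle point.

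The main obstacle is \emph{rigor in the It\^o/expectation steps}, not the algebra of the HJBI drift. In Step~1 one must justify that $\varphi(t,X^{\pi^*}(t),Y(t))$ is integrable and that the local-martingale part is a true martingale under \emph{every} $\mathbb{Q}^{\tilde\mu}$ — this is precisely what Proposition~\ref{admissible} (with its explicit moment conditions involving $T,\sigma,\sigma_0$ and the H\"older exponents) was set up to supply, together with the growth estimates $|f(t,y)|\le C_1(1+y^2)$ and $|f_y(t,y)|\le C_2(1+|y|)$ from Theorem~\ref{theorem1} and the feasibility bound $|\hat\pi(t,y)|\le C_3(1+|y|)$. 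In Step~2 the subtlety is that for a \emph{general} admissible $\pi$ one only has the supermartingale property of the stochastic integral (Definition~\ref{D4.1}(ii)), which is exactly enough to push the expectation inequality in the correct direction; one must take care not to claim a martingale there. A secondary technical point is handling the non-smoothness at the kink $r-y\pm a\sqrt{\gamma(t)}=0$ of the coefficient $g$: since $f\in C^{1,2}([0,T)\times\mathbb R)$ by Theorem~\ref{theorem1} and $\varphi$ inherits this regularity in $(t,x,y)$, It\^o's formula applies directly and the kink causes no difficulty. Finally, one should localize with stopping times $\tau_n$ and pass to the limit via the uniform integrability guaranteed by the moment bounds, then send $n\to\infty$.
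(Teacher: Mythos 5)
Your Step 1 and Step 3 reproduce the paper's lower-bound and saddle-point identification essentially verbatim, and they are correct. The genuine gap is in Step 2. You claim that once $\mu$ is frozen at $\tilde\mu^*(t)=\hat\mu(t,Y(t);\pi^*(t))$, the map $\pi\mapsto H(\pi,\tilde\mu^*(t))$ (the It\^o drift of $\varphi$ with control $\pi$) is maximized at $\pi^*(t)$, so that the drift is $\le 0$ for every admissible $\pi$ under $\mathbb{Q}^{\tilde\mu^*}$. This is true in the two outer regimes, but it fails on the event $\{r\in\Lambda_{t,Y(t)},\,Y(t)\neq r\}$, which has positive probability. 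There $\pi^*(t)=\tilde\pi$ is the zero of $\varphi_x\pi+\varphi_y\gamma(t)/\sigma^2$, at which $H(\tilde\pi,\mu)$ is independent of $\mu$ and equals $0$; but with the paper's choice $\tilde\mu^*(t)=Y(t)$, the concave quadratic $\pi\mapsto H(\pi,Y(t))$ has its unconstrained maximizer at $\tilde\pi-\frac{r-Y(t)}{k\mathrm{e}^{r(T-t)}\sigma^2}\neq\tilde\pi$, so $\sup_{\pi}H(\pi,\tilde\mu^*(t))>H(\tilde\pi,\tilde\mu^*(t))=0$. In other words, $(\pi^*,\tilde\mu^*)$ is not a pointwise saddle of the Hamiltonian in the middle region, and your asserted supermartingale property of $\varphi(t,X^{\pi}(t),Y(t))$ under the \emph{fixed} measure $\mathbb{Q}^{\tilde\mu^*}$ does not follow.

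The paper avoids this by not fixing $\tilde\mu^*$ in the upper bound: for each admissible $\pi$ it introduces the $\pi$-adapted response $\check\mu(t)=\hat\mu(t,Y(t);\pi(t))$, under whose measure the drift equals $\inf_{\mu\in\Lambda_{t,Y(t)}}H(\pi(t),\mu)\le\sup_{\pi}\inf_{\mu}H=0$, giving $\min_{\mathbb{Q}^{\tilde\mu}\in\mathcal{Q}}\mathbb{E}^{\mathbb{Q}^{\tilde\mu}}[U(X^{\pi}(T))]\le\mathbb{E}^{\mathbb{Q}^{\check\mu}}[U(X^{\pi}(T))]\le\varphi(0,x_0,y_0)$ via Definition \ref{D4.1}(ii). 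You should replace your Step 2 by this argument (or, alternatively, exploit the freedom in defining $\hat\mu$ at $\pi=\tilde\pi$ to set $\hat\mu(t,y;\tilde\pi)=r$ whenever $r\in\Lambda_{t,y}$, which does restore the saddle-point property; but as written, with $\tilde\mu^*=Y(t)$ there, your step is invalid). Note that Step 3 survives, since for the particular choice $\pi=\pi^*$ one has $\check\mu=\tilde\mu^*$ and the drift vanishes identically, which together with Proposition \ref{admissible} yields $\mathbb{E}^{\mathbb{Q}^{\tilde\mu^*}}[U(X^{\pi^*}(T))]=\varphi(0,x_0,y_0)$. Your closing remarks on integrability, localization, and the regularity of $f$ are appropriate and consistent with what Proposition \ref{admissible} and Theorem \ref{theorem1} provide.
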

\begin{proof}
	From the discussion above, we know that $\varphi(t,x,y)$ is a solution of the HJBI equation (\ref{hjbi0}). Additionally, we know that for any $\pi \in \mathbb{R}$, $\hat\mu(t , y;\pi) $ satisfies
	\begin{equation*}
		\begin{aligned}
			\inf_{\mu \in \Lambda_{t,y}}\{\mu (\varphi_x\pi+\varphi_y\frac{\gamma(t)}{\sigma^2})\}=\hat\mu(t , y;\pi) (\varphi_x\pi+\varphi_y\frac{\gamma(t)}{\sigma^2}).
		\end{aligned}
	\end{equation*}
	For any admissible strategy $\pi$, define  $\check\mu=\{\check\mu(t)=\hat\mu(t,Y(t);\pi(t)):0\leq t \leq T\}\in \mathcal{M}$, let $X^{\pi}$ be the unique strong solution under $\mathbb{Q}^{\check \mu}$ of the following SDE:
	\begin{equation*}
		\left\{\begin{array}{l}
			\mathrm{d} X^{\pi}(t)=r X^{\pi}(t)\mathrm{d}t+\pi(t)(\check\mu(t)-r)\mathrm{d}t+\sigma\pi(t)\mathrm{d}W^{\check \mu}(t),\ t\in [0, T],\\
			X^{\pi}(0)=x_0.
		\end{array}\right.
	\end{equation*}	
	Using Itô's lemma, 
	\begin{equation*}
		\begin{aligned}
			\mathrm{d}\varphi\left(t, X^{\pi}(t),Y(t)\right)&=\varphi_t\left(t, X^{\pi}(t),Y(t)\right)\mathrm{d}t\\
			&\quad +\varphi_x\left(t, X^{\pi}(t),Y(t)\right)\left[r X^{\pi}(t)\mathrm{d}t+\pi(t)(\check\mu(t)-r)\mathrm{d}t+\sigma\pi(t)\mathrm{d}W^{\check \mu}(t)\right]\\
			&\quad+\varphi_y\left(t, X^{\pi}(t),Y(t)\right)\left\{\gamma(t) \sigma^{-2}\left[(\check\mu(t)-Y(t))\mathrm{d} t+\sigma \mathrm{d} W^{\check\mu}(t)\right]\right\}\\
			&\quad+\frac{1}{2}\varphi_{xx}\left(t, X^{\pi}(t),Y(t)\right)\sigma^2\pi^2(t)\mathrm{d}t+\frac{1}{2}\varphi_{yy}\left(t,X^{\pi}(t),Y(t)\right)\frac{\gamma^2(t)}{\sigma^2}\mathrm{d}t\\
			&\quad+\varphi_{xy}\left(t,X^{\pi}(t),Y(t)\right)\gamma(t)\pi(t)\mathrm{d}t.
		\end{aligned}
	\end{equation*}
	As $\varphi(t,x,y)$ satisfies (\ref{hjbi0}), we have 
	\begin{equation*}
		\begin{aligned}
			\mathrm{d}\varphi\left(t, X^{\pi}(t),Y(t)\right)\leq \varphi_x\left(t, X^{\pi}(t),Y(t)\right)\sigma\pi(t)\mathrm{d}W^{\check \mu}(t)+\varphi_y\left(t, X^{\pi}(t),Y(t)\right)\frac{\gamma(t)} {\sigma} \mathrm{d} W^{\check\mu}(t).
		\end{aligned}
	\end{equation*}
	Based on Definition \ref{def} (ii) of the admissible investment strategy, $$\mathbb{E}^{\mathbb{Q}^{\check \mu}}[U(X^{\pi}(T))]=\mathbb{E}^{\mathbb{Q}^{\check \mu}}\left[\varphi\left(T, X^{\pi}(T),Y(T)\right)\right]\leq \varphi\left(0,x_0,y_0\right).$$ Thus 
	$$V(x_0)=\max_{\pi\in\Pi}\min_{\mathbb{Q}^{\tilde \mu}\in\mathcal{Q}}\mathbb{E}^{\mathbb{Q}^{\tilde \mu}}[U(X^{\pi}(T))]\leq \varphi\left(0,x_0,y_0\right).$$
	On the other hand, denote
	\begin{equation*}
		\begin{aligned}
			\psi(t,\mu)&=\varphi_t\left(t,X^{\pi^{*}}(t),Y(t)\right)+\varphi_x\left(t,X^{\pi^{*}}(t),Y(t)\right)\left[rX^{\pi^{*}}(t)+\pi^{*}(t)(\mu-r)\right]\\
			&\quad+\varphi_y\left(t,X^{\pi^{*}}(t),Y(t)\right)\left[\gamma(t) \sigma^{-2}(\mu-Y(t))\right]\\
			&\quad+\frac{1}{2}\varphi_{xx}\left(t,X^{\pi^{*}}(t),Y(t)\right)\sigma^2(\pi^{*})^2(t)+\frac{1}{2}\varphi_{yy}\left(t,X^{\pi^{*}}(t),Y(t)\right)\frac{\gamma^2(t)}{\sigma^2}\\
			&\quad+\varphi_{xy}\left(t,X^{\pi^{*}}(t),Y(t)\right)\gamma(t)\pi^{*}(t).
		\end{aligned}
	\end{equation*}
	Then 
	\begin{equation*}
		\begin{aligned}
			V(x_0)&=\max_{\pi\in\Pi}\min_{\mathbb{Q}^{\tilde \mu}\in\mathcal{Q}}\mathbb{E}^{\mathbb{Q}^{\tilde \mu}}[U(X^{\pi}(T))]
			\geq \min_{\mathbb{Q}^{\tilde \mu}\in\mathcal{Q}}\mathbb{E}^{\mathbb{Q}^{\tilde \mu}}[U(X^{\pi^{*}}(T))]
			=\min_{\mathbb{Q}^{\tilde \mu}\in\mathcal{Q}}\mathbb{E}^{\mathbb{Q}^{\tilde \mu}}\left[\varphi\left(T,X^{\pi^{*}}(T),Y(T)\right)\right]\\
			&=\min_{\mathbb{Q}^{\tilde \mu}\in\mathcal{Q}}\mathbb{E}^{\mathbb{Q}^{\tilde \mu}}\left[\int_0^T \psi(t,\tilde\mu(t))\mathrm{d}t+\int_0^T\varphi_x\left(t,X^{\pi^{*}} (t),Y(t)\right)\sigma\pi^{*}(t)\mathrm{d} W^{\tilde\mu}(t)\right.\\&\left.\quad+\int_0^T\varphi_y\left(t,X^{\pi^{*}} (t),Y(t)\right)\frac{\gamma(t)} {\sigma} \mathrm{d} W^{\tilde\mu}(t)\right] +\varphi\left(0,x_0,y_0\right)\\
			&=\min_{\mathbb{Q}^{\tilde \mu}\in\mathcal{Q}}\mathbb{E}^{\mathbb{Q}^{\tilde \mu}}\left[\int_0^T \psi(t,\tilde\mu(t))\mathrm{d}t\right]+\varphi\left(0,x_0,y_0\right)\\
			&\geq \min_{\mathbb{Q}^{\tilde \mu}\in\mathcal{Q}}\mathbb{E}^{\mathbb{Q}^{\tilde \mu}}\left[\int_0^T \min_{\mu(t)\in \Lambda_{t,Y(t)}}\psi(t,\mu(t) )\mathrm{d}t\right]+\varphi\left(0,x_0,y_0\right).
		\end{aligned}
	\end{equation*}	
	The last equality holds  due to Proposition \ref{admissible}.
	
	Based on  the definitions of $\pi^{*}, \tilde\mu^{*}$, and the fact that $\varphi(t,x,y)$ satisfies equation (\ref{hjbi0}), we have $$\min_{\mu(t)\in \Lambda_{t,Y(t)}}\psi(t,\mu(t))= \psi(t,\tilde\mu^{*}(t))=0.$$ Thus 
	\begin{equation*}
		\begin{aligned}
			V(x_0)&\geq \min_{\mathbb{Q}^{\tilde \mu}\in\mathcal{Q}}\mathbb{E}^{\mathbb{Q}^{\tilde \mu}}\left[\int_0^T \min_{\mu(t)\in \Lambda_{t,Y(t)}}\psi(t,\mu(t) )\mathrm{d}t\right]+\varphi\left(0,x_0,y_0\right)\\
			&=\min_{\mathbb{Q}^{\tilde \mu}\in\mathcal{Q}}\mathbb{E}^{\mathbb{Q}^{\tilde \mu}}\left[\int_0^T \psi(t,\tilde\mu^{*}(t) )\mathrm{d}t\right]+\varphi\left(0,x_0,y_0\right)\\
			&=\varphi\left(0,x_0,y_0\right).
		\end{aligned}
	\end{equation*}
Then it follows that
	$$V(x_0)=\max_{\pi\in\Pi}\min_{\mathbb{Q}^{\tilde \mu}\in\mathcal{Q}}\mathbb{E}^{\mathbb{Q}^{\tilde \mu}}[U(X^{\pi}(T))]=\varphi\left(0,x_0,y_0\right)=\min_{\mathbb{Q}^{\tilde \mu}\in\mathcal{Q}}\mathbb{E}^{\mathbb{Q}^{\tilde \mu}}[U(X^{\pi^{*}}(T))],$$ 
	\begin{equation*}
		\begin{aligned}
			V(x_0)&=\mathbb{E}^{\mathbb{Q}^{\tilde\mu^{*}}}\left[\int_0^T \psi(t,\tilde\mu^{*}(t) )\mathrm{d}t\right]+\varphi\left(0,x_0,y_0\right)\\
			&=\mathbb{E}^{\mathbb{Q}^{\tilde\mu^{*}}}\left[\int_0^T \psi(t,\tilde\mu^{*}(t))\mathrm{d}t+\int_0^T\varphi_x\left(t,X^{\pi^{*}} (t),Y(t)\right)\sigma\pi^{*}(t)\mathrm{d} W^{\tilde\mu^{*}}(t)\right.\\&\left.\quad+\int_0^T\varphi_y\left(t,X^{\pi^{*}} (t),Y(t)\right)\frac{\gamma(t)} {\sigma}\mathrm{d} W^{\tilde\mu^{*}}(t)\right] +\varphi\left(0,x_0,y_0\right)\\
			&=\mathbb{E}^{\mathbb{Q}^{\tilde\mu^{*}}}\left[\varphi\left(T,X^{\pi^{*}}(T),Y(T)\right)\right]\\
			&=\mathbb{E}^{\mathbb{Q}^{\tilde\mu^{*}}}[U(X^{\pi^{*}}(T))],
		\end{aligned}
	\end{equation*}	
	where the last but two equality holds based on Proposition \ref{admissible} and the last but one equality holds based on Itô's lemma. As such, we know that $\pi^{*}$ is an optimal investment strategy, $\varphi$ is the value function, and thus the theorem is proved.
\end{proof}

\subsection{Case without uncertainty}
	When $\sigma_0^2=0$, $\mu$ is a constant and $\mu\equiv y_0$, and the optimal investment strategy $\pi^{*}$ degenerates to $\bar\pi^{*}=\left\{\bar\pi^{*} (t):  0 \leq t \leq T\right\}$, where
	\begin{equation*}
		\begin{aligned}
			\bar\pi^{*} (t)=\frac{\mu-r}{k \mathrm{e}^{r(T-t)}\sigma^2}=\frac{y_0-r}{k \mathrm{e}^{r(T-t)}\sigma^2}.
		\end{aligned}
	\end{equation*}
	The robust optimal investment problem  (\ref{obj0}) degenerates to the classical optimal investment problem: 
	\begin{equation}
		\begin{aligned}
			\max_{\pi}\mathbb{E}^{\mathbb{P}}[U(X^{\pi}(T))].
		\end{aligned}
		\label{obj2}
	\end{equation}
	It is well known that $\bar\pi^{*}$ is indeed the optimal investment strategy of the optimization problem (\ref{obj2}).

\subsection{Case when $a=0$}
	When $a=0$, there is only one $\tilde\mu=\left\{\tilde \mu(t): \tilde \mu(t) =Y(t) , 0 \leq t \leq T\right\}\in \mathcal{M}$, and the robust optimal investment problem  (\ref{obj0}) is actually the optimal  investment problem (\ref{obj2}) with partial information, where the drift $\mu$ is an unknown constant whose distribution is Gaussian. Furthermore, solving \eqref{pde2} with $a=0$, we can obtain the explicit expressions of $f(t,y)$ and the optimal investment strategy $\pi_0^{*}$. To be specific,
	$$f(t,y)=f_1(t)y^2+f_2(t)y+f_3(t),$$
	where $f_1(t)$, $ f_2(t)$ and $f_3(t)$ satisfy the following ODEs: 
	\begin{equation}\label{equ:fa=0}
		\left\{\begin{array}{l}\begin{aligned}
				&f_1^{\prime}(t)  =\frac{2\gamma(t)}{\sigma^2} f_1(t)+\frac{1}{2\sigma^2},\quad \quad &f_1(T)=0 ,\\
				&f_2^{\prime}(t)  =\frac{\gamma(t)}{\sigma^2} f_2(t)-\frac{r+2rf_1(t)\gamma(t)}{\sigma^2},\quad &f_2(T)=0,\\
				&f_3^{\prime}(t)  = \frac{r^2}{2\sigma^2}-\frac{r f_2(t)\gamma(t)+f_1(t)\gamma^2(t)}{\sigma^2},\quad  &f_3(T)=0 .
		\end{aligned}\end{array}\right.
	\end{equation}
Solving the three ODEs above, we  obtain the expressions of $f_1(t)$, $ f_2(t)$ and $f_3(t)$ as follows: 
	\begin{equation*}
		\left\{\begin{array}{l}
			f_1(t)  =-\int_{t}^{T} \frac{1}{2\sigma^2}\mathrm{e}^{-\int_{t}^{s}\frac{2\gamma(u)}{\sigma^2} \mathrm{d} u} \mathrm{d} s=\frac{1}{2\gamma^2(t)}\left[ \gamma(T)-\gamma(t) \right], \\
			f_2(t)  =\int_{t}^{T} \frac{r+2rf_1(s)\gamma(s)}{\sigma^2}\mathrm{e}^{-\int_{t}^{s}\frac{\gamma(u)}{\sigma^2} \mathrm{d} u} \mathrm{d} s=\frac{1}{\sigma^2\gamma(t)}r\gamma(T)(T-t)=-2rf_1(t),\\
			f_3(t)  = \int_{t}^{T}\left[\frac{r f_2(s)\gamma(s)+f_1(s)\gamma^2(s)}{\sigma^2}-\frac{r^2}{2\sigma^2}\right]\mathrm{d} s =\frac{r^2}{\sigma^4}\gamma(T)\frac{(T-t)^2}{2}-\frac{r^2-\gamma(T)}{2\sigma^2}(T-t)-\frac{1}{2}\ln{\frac{\gamma(t)}{\gamma(T)}}.
		\end{array}\right.
	\end{equation*}
	Besides, the explicit expression of the optimal investment strategy $\pi_0^{*}=\left\{\pi_0^{*} (t): 0 \leq t \leq T\right\}$ is 
	\begin{equation}\label{equ:pi0}
		\begin{aligned}
			\pi_0^{*} (t)= \frac{f_y(t,Y(t))\gamma(t)+Y(t)-r}{k \mathrm{e}^{r(T-t)}\sigma^2}=\frac{(2f_1(t)Y(t)+f_2(t))\gamma(t)+Y(t)-r}{k \mathrm{e}^{r(T-t)}\sigma^2}=\frac{\gamma(T)}{\gamma(t)}\frac{Y(t)-r}{k \mathrm{e}^{r(T-t)}\sigma^2},
		\end{aligned}
	\end{equation}
	which is identical to the non-robust optimal investment strategy with partial information in \citet{bismuth2019portfolio}.

Additionally,  comparing \eqref{equ:pi0} and \eqref{equ:pihat}, in light of  \eqref{equ:fy1}-\eqref{equ:fy2}, we derive the following relations:
\begin{equation*}\begin{cases}
    \pi^*(t)>\pi_0^*(t)-\frac{\mathrm{e}^{-r(T-t)}}{k \sigma^2}a\sqrt{\gamma(t)},\ \text{when } Y(t)>r,\\
    \pi^*(t)<\pi_0^*(t)+\frac{\mathrm{e}^{-r(T-t)}}{k \sigma^2}a\sqrt{\gamma(t)},\ \text{when } Y(t)<r,
\end{cases}\end{equation*}
which indicate that relative to the ambiguity-neutral case, ambiguity aversion modifies the robust optimal strategy, with the adjustment not exceeding $\frac{\mathrm{e}^{-r(T-t)}}{k \sigma^2} a \sqrt{\gamma(t)}$.

\section{Numerical analysis}
\label{Numerical analysis}
In this section, we conduct a series of numerical analyses to explore the impacts of ambiguity aversion and learning on the optimal investment strategy.  We estimate the parameters of the risky asset using data of the S\&P 500 index daily closing prices from Center for Research in Security Prices (CRSP) from January 2019 to December 2023 by maximum likelihood estimation. The estimated volatility is $\sigma = 0.213$, and the expectation and variance of the Gaussian prior for $\mu$ are $y_0 = 0.174$ and $\sigma_0^2 = 0.00908$, respectively. Over the same period, the average overnight bank funding rate was $r = 0.018$. The time horizon is set to half a year, i.e., $T = 0.5$. Additionally, we set $k = 1$. Following the setup in \citet{pei2018life}, we assume that the parameter in the confidence set $\Lambda_{t,y}$ is $a = 1.96$, indicating that $\Lambda_{t,y}$ represents a confidence set with a 95\% confidence level. 
\begin{remark}
    It is straightforward to verify that the assumptions in Proposition \ref{admissible} hold if we choose $\delta_1 = 1.1$, $\delta_7 = \delta_8 = 2$, and $\epsilon_3 = 1$. 
\end{remark}


\subsection{Robust optimal feedback function $\hat\pi(t,y)$.}
\begin{figure}
	\centering
	{\includegraphics[width=0.8\linewidth]{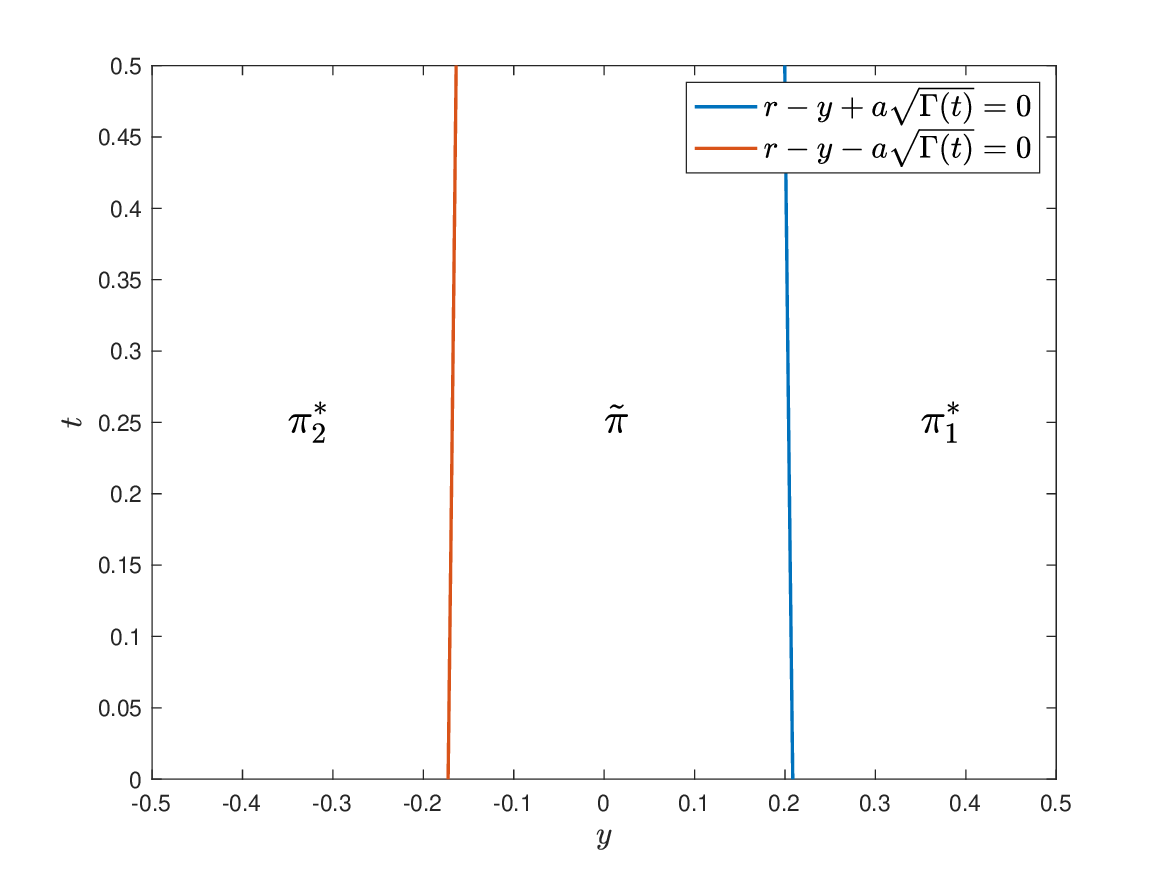}}
	\caption{Distribution of the  robust optimal feedback function $\hat\pi(t,y)$ in the $(y,t)$ plane.} \label{f1}
\end{figure}

\begin{figure}
	\centering
	{\includegraphics[width=0.8\linewidth]{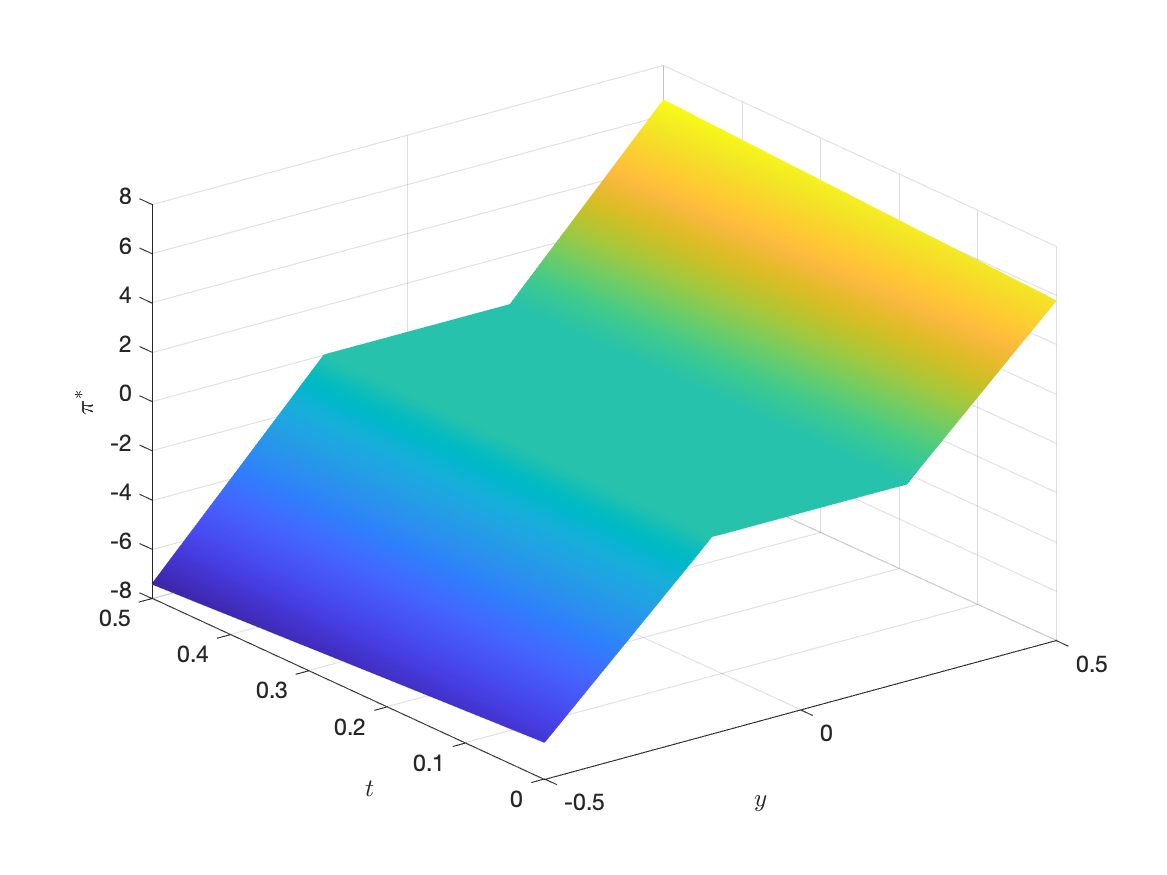}}
	\caption{The robust optimal feedback function $\hat\pi(t,y)$.}\label{f2}
\end{figure}



In Fig.~\ref{f1}, we illustrate the robust optimal feedback function \(\hat\pi(t,y)\), which reveals three distinct strategies across different ranges of \(y\), supporting the findings of Proposition \ref{prop}. Fig.~\ref{f2} further analyzes the behavior of \(\hat\pi(t,y)\). By combining Corollary \ref{coro:hatpi} with Fig.~\ref{f2}, we categorize the robust optimal feedback function into three regions: buying, selling, and small-trading. When \(\mu^{\text{min}}_{t,y}>r\), the smallest Sharpe ratio is positive, leading the investor to maintain a positive myopic demand for the risky asset. In cases where \(y\) is relatively large, myopic demand predominates over hedging demand, resulting in a buying position in the risky asset. Conversely, when \(y\) is near \(r+a\sqrt{\gamma(t)}\), the signs of hedging and myopic demands are opposite, leading to a small robust optimal feedback function. In contrast, when \(\mu^{\text{max}}_{t,y}< r\), the largest Sharpe ratio becomes negative, prompting the investor to adopt a negative myopic demand for the risky asset. Here, if \(y\) is relatively small, myopic demand prevails, leading to a selling position. Again, if \(y\) approaches \(r-a\sqrt{\gamma(t)}\), the signs of the hedging and myopic demands diverge, resulting in a small robust optimal feedback function.  When \(r \in \Lambda_{t,y}\), the investor only has hedging demand, and the robust optimal feedback function remains relatively small.

It is worth noting that Fig.~\ref{f1} shows that the middle region of $y$ narrows over time. Additionally, in Fig.~\ref{f2}, the robust optimal feedback function increases over time in the buying region, while it decreases in the selling region, indicating a more aggressive strategy as time progresses. This illustrates the effect of Bayesian learning, as the investor becomes less uncertain about the drift $\mu$ with the accumulation of information over time.


\subsection{Robust optimal investment strategy $\pi^*$}
\begin{figure}
	\centering
	{\includegraphics[width=0.8\linewidth]{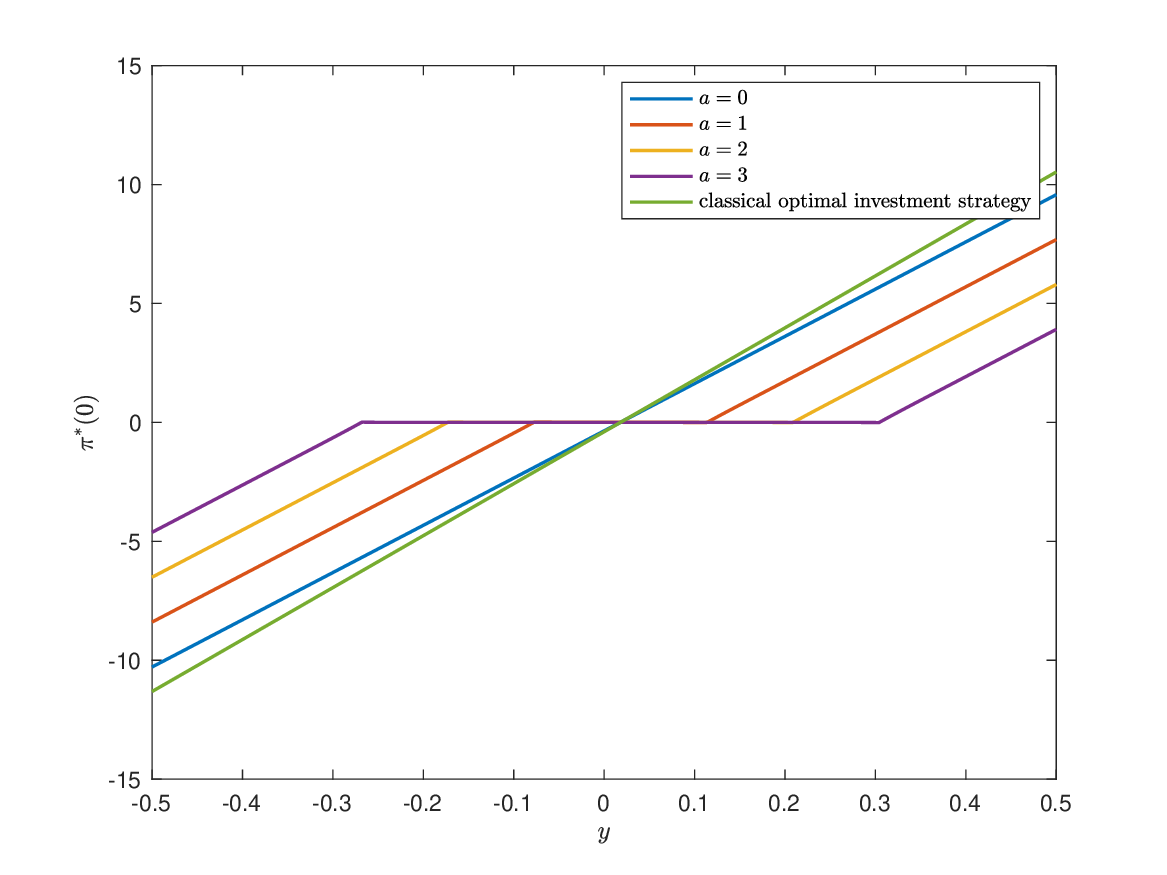}}
	\caption{The impacts of $a$ on the robust optimal investment strategy $\pi^{*}(0)$.} \label{f4}
\end{figure}

\begin{figure}
	\centering
	{\includegraphics[width=0.8\linewidth]{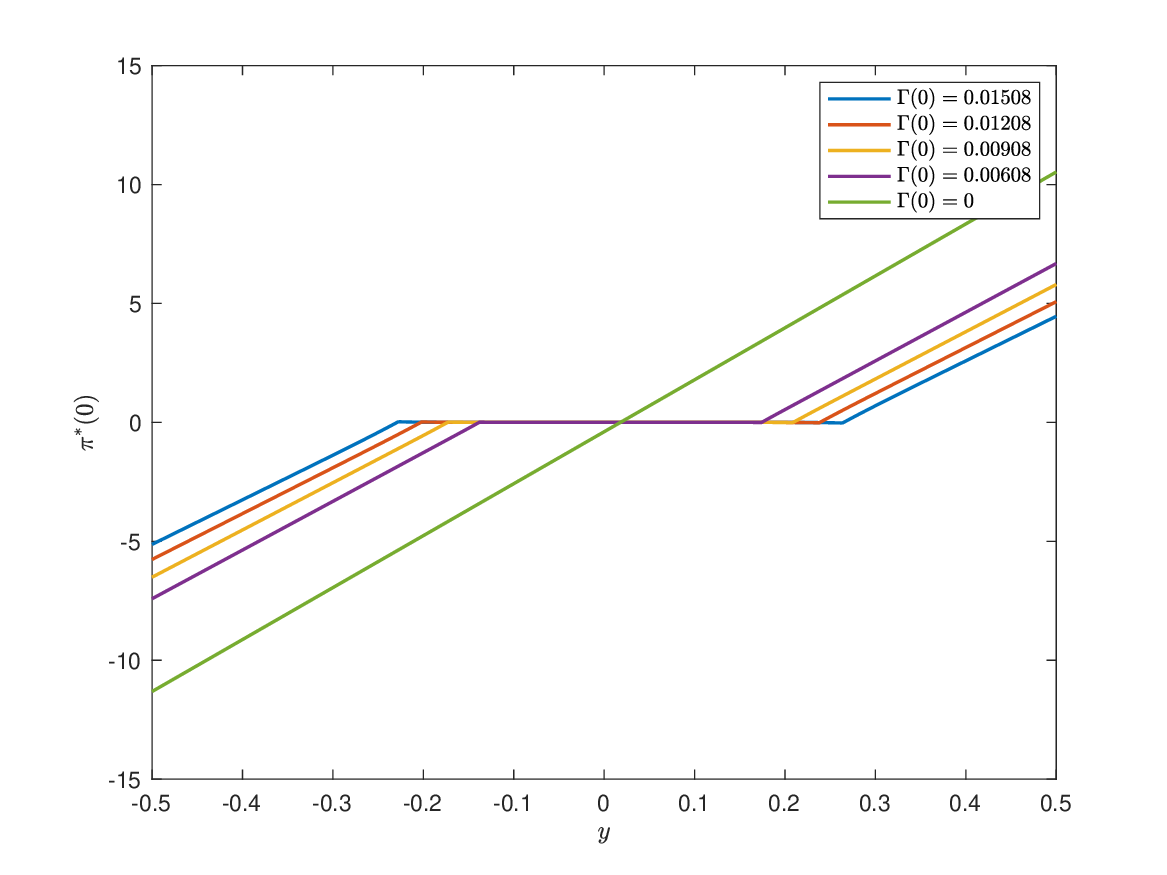}}
	\caption{The impacts of $\sigma_0^2$ on the robust optimal investment strategy $\pi^{*}(0)$.} \label{f5}
\end{figure}

\begin{figure}
	\centering
	{\includegraphics[width=0.8\linewidth]{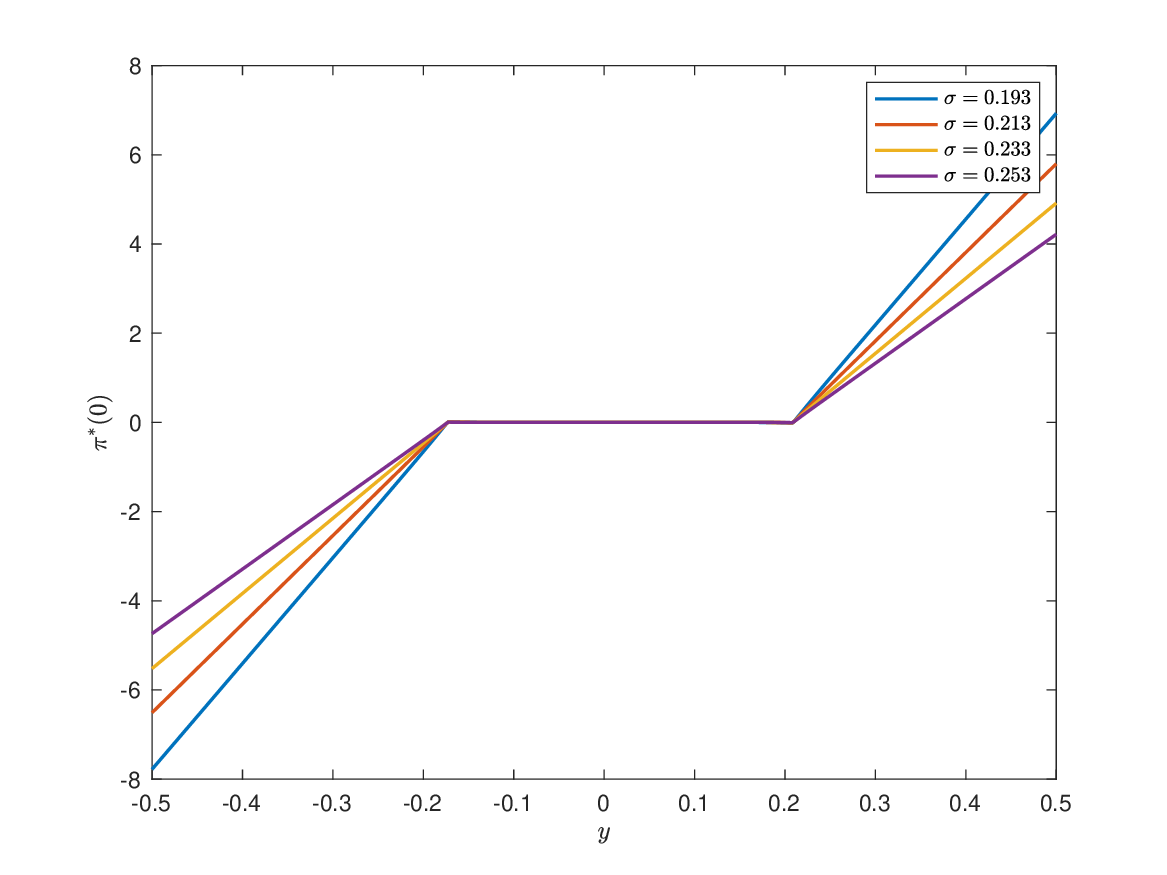}}
	\caption{The impacts of $\sigma$ on the robust optimal investment strategy $\pi^{*}(0)$.} \label{f6}
\end{figure}

In this subsection, we study the impacts of exogenous parameters on the robust optimal investment strategy $\pi^{*}(0)$ at time $0$. We also compare it with the classical optimal investment strategy $\bar{\pi}^{*}$ and the optimal investment strategy under partial information $\pi^{*}_0$.

In Fig.~\ref{f4}, we analyze the effect of the parameter \(a\) on the robust optimal investment strategy \(\pi^{*}(0)\). As \(a\) increases, the length of the confidence set for \(\mu\) expands, prompting the ambiguity-averse investor to adopt a more conservative stance, which results in a less aggressive robust optimal investment strategy \(\pi^{*}(0)\). Notably, the length of the small trading area decreases as \(a\) decreases, and it entirely disappears when \(a = 0\). Furthermore, the robust optimal investment strategy under partial information, \(\pi_0^{*}(0)\), incorporates a hedging demand and is less aggressive than the classical optimal investment strategy \(\bar{\pi}^{*}(0)\).

In Fig.~\ref{f5}, we examine the influence of \(\sigma_0^2\) on the robust optimal investment strategy \(\pi^{*}(0)\). A smaller variance \(\sigma_0^2\) reduces uncertainty around the drift \(\mu\), leading to a shorter confidence set for \(\mu\) and making risky investments more attractive to the ambiguity-averse investor. Consequently, the robust optimal investment strategy \(\pi^{*}(0)\) becomes more aggressive as \(\sigma_0^2\) decreases. Notably, when \(\sigma_0^2 = 0\), there is no uncertainty regarding \(\mu\), causing the robust optimal investment strategy to align with the classical optimal investment strategy.

In Fig.~\ref{f6}, we analyze the effect of \(\sigma\) on the robust optimal investment strategy \(\pi^{*}(0)\). As volatility \(\sigma\) increases, the length of the confidence set for \(\mu\) remains constant; however, the attractiveness of the risky investment return diminishes for the ambiguity-averse investor. Consequently, the robust optimal investment strategy \(\pi^{*}(0)\) becomes less aggressive as \(\sigma\) increases.

\section{Conclusions}
In this paper, we study an ambiguity-averse investor who is uncertain about the drift of a risky asset. The investor's belief about the unknown drift is updated through Bayesian learning. Based on the updated belief, we establish a state-dependent and time-dependent confidence set with a given confidence level. The investor seeks to maximize the expected utility of terminal wealth under the worst-case scenario for the unknown drift. We derive and solve the HJBI equation associated with this robust optimal investment problem, using its solution to determine a candidate robust optimal investment strategy. Specifically, the solution to the HJBI equation is represented by a PDE, which is a Cauchy problem for a one-dimensional linear second-order parabolic equation with unbounded coefficients, and we demonstrate the existence and uniqueness of this solution in our work. Additionally, we prove a verification theorem to confirm the optimality of the candidate robust optimal investment strategy and value function. Finally, we conduct numerical analyses to explore how ambiguity aversion and learning influence the optimal investment strategy. Our results show that ambiguity aversion concerning the risky asset's drift leads to a more conservative investment strategy, particularly when the conditional expectation of the drift is close to the risk-free interest rate. Furthermore, the robust investment strategy can be divided into three regions: buying, small trading, and selling. As the investor's uncertainty about the drift decreases over time, the optimal investment strategy becomes increasingly aggressive.

\paragraph{Acknowledgements.} The work is supported by the National Key R\&D Program of China
({2020YFA0712700}), 
the National Natural Science Foundation of China (12371477, 11901574,  12271290, {12071146}).
The authors thank the members of the group of Mathematical Finance and Actuarial Science at the Department of Mathematical Sciences, Tsinghua University for their helpful feedbacks and conversations. 



\bibliographystyle{abbrvnat}

\end{document}